\numberwithin{equation}{section}
\newcommand{\R}{\mathbb{R}}
\newcommand{\Z}{\mathbb{Z}}
\renewcommand{\P}{\mathcal{P}}
\newcommand{\G}{\mathcal{G}}
\newcommand{\diam}{\mathrm{diam}}
\newcommand{\supp}{\mathrm{supp}}
\newcommand{\gp}{\mathrm{gp}}
\newcommand{\cprod}{\,\Box\,}
\newcommand{\floor}[1]{\left\lfloor #1\right\rfloor}
\newcommand{\ceil}[1]{\left\lceil #1\right\rceil}
\newcommand{\restrict}{\upharpoonright}
\newcommand{\spn}{\textrm{span}}
\renewcommand{\d}{d^*}
\renewcommand{\emptyset}{\varnothing}
\newcommand{\st}{:}
\newtheorem{theorem}{Theorem}[section]
\newtheorem{lemma}[theorem]{Lemma}
\newtheorem{corollary}[theorem]{Corollary}
\newtheorem{proposition}[theorem]{Proposition}
\theoremstyle{definition}
\newtheorem{question}[theorem]{Question}
\newtheorem{problems}[theorem]{Open Problems}
\newtheorem{conjecture}[theorem]{Conjecture}
\newtheorem{remark}[theorem]{Remark}
\newtheorem{definition}[theorem]{Definition}
\newtheorem{example}[theorem]{Example}
\date{\today}
\begin{document}

\title{The $k$-general $d$-position problem for graphs}

\author[Brent Cody]{Brent Cody}
\address[Brent Cody]{ 
Virginia Commonwealth University,
Department of Mathematics and Applied Mathematics,
1015 Floyd Avenue, PO Box 842014, Richmond, Virginia 23284, United States
} 
\email[B. ~Cody]{bmcody@vcu.edu} 
\urladdr{http://www.people.vcu.edu/~bmcody/}

\author[Garrett Moore]{Garrett Moore}
\address[Garrett Moore]{ 
Virginia Commonwealth University,
Department of Mathematics and Applied Mathematics,
1015 Floyd Avenue, PO Box 842014, Richmond, Virginia 23284, United States
} 
\email[G. ~Moore]{mooregk@vcu.edu} 

\thanks{}

\begin{abstract}
A set of vertices of a graph is said to be in general position if no three vertices from the set lie on a common geodesic. Recently Klav\v{z}ar, Rall and Yero generalized this notion by defining a set of vertices to be in general $d$-position if no three vertices from the set lie on a common geodesic of length at most $d$. We generalize this notion further by defining a set of vertices to be in $k$-general $d$-position if no $k$ vertices of the set lie on a common geodesic of length at most $d$. The $k$-general $d$-position number of a graph is the largest cardinality of a $k$-general $d$-position set. We provide upper and lower bounds on the $k$-general $d$-position number of graphs in terms of the $k$-general $d$-position number of certain kinds of subgraphs. We compute the $k$-general $d$-position number of finite paths and cycles. Along the way we establish that the maximally even subsets of cycles, which were introduced in Clough and Douthett's work on music theory, provide the largest possible $k$-general $d$-position sets in $n$-cycles. We generalize Klav\v{z}ar and Manuel's notion of monotone-geodesic labeling to that of $k$-monotone-geodesic labeling in order to calculate the $k$-general $d$-position number of the infinite two-dimensional grid. We also prove a formula for the $k$-general $d$-position number of certain thin finite grids, providing a partial answer to a question asked by Klav\v{z}ar, Rall and Yero.
\end{abstract}

\subjclass[2020]{05C69, 05C35, 05C38, 05C12}

\keywords{}

\maketitle


\section{Introduction}\label{section_intro}


The classical no-three-in-line problem, posed by Dudeney \cite{MR0105345} over one-hundred years ago asks: What is the largest number of points that can be placed in the $n\times n$ grid so that no three points lie on the same line? Despite being widely studied for many years \cite{MR1168162, MR1492871, MR0238765, MR0366817, MR3404483}, this problem remains open. A graph-theoretic variation of the no-three-in-line problem called the \emph{general position problem for graphs}, introduced independently by Klav\v{z}ar and Manuel \cite{MR3849577} and by Chandran and Parthasarathy \cite{chandran2016geodesic}, asks: Given a graph $G$, what is the largest number of vertices that can be selected so that no three lie in a common shortest path?
This problem for graphs has itself been extensively studied by many authors \cite{MR3948765, MR4281067, MR4711379, MR4265041, MR4154901, MR4019752, MR3849577, MR3879620, MR4279308}. A generalization of the classical no-three-in-line problem, recently studied in \cite{Lefmann2024}, can be obtained by replacing ``three'' with an arbitrary integer parameter: What is the largest number of points that can be placed in the $n\times n$ grid so that no $k$ points lie on the same line? Such variations for arbitrary point sets in $\R^2$ have also been studied by Erd\H{o}s \cite{MR0852104, MR0974622} and many others \cite{MR2034714, MR1949900, MR3111653, MR3102591}. In this article, we introduce a variation of the general position problem for graphs in which sets of vertices may have three vertices in a common shortest path, but some larger number of vertices being in a common shortest path is forbidden. Before we formally state the problem, let us review some definitions.

Suppose $G=(V(G),E(G))$ is a graph. The \emph{length} of a path $g$ of $G$, written as $\lambda_G(g)$, or as $\lambda(g)$ when the context is clear, is the number of edges in $g$. For vertices $u,v\in V(G)$, the length of a shortest path from $u$ to $v$ in $G$ is denoted by $d_G(u,v)$, or just $d(u,v)$ when the context is clear. A path $g$ in $G$ is called a \emph{geodesic of $G$} or a \emph{shortest path of $G$} if it is a shortest path between two vertices of $G$. The set of vertices corresponding to some geodesic $g$ of $G$ will be denoted by $V(g)$. Notice that for any path $g$ of $G$, $|V(g)|=\lambda(g)+1$. We let $\G(G)$ denote the set of all geodesics of $G$.

Using the terminology of Klav\v{z}ar and Manuel \cite{MR3849577}, a set of vertices $S\subseteq V(G)$ is said to be a \emph{general position set in $G$} provided that 
\[\text{for all $g\in\G(G)$ we have $|S\cap V(g)|<3$};\] that is, no three vertices in $S$ lie in a common geodesic. The \emph{general position problem} for $G$ is to find a largest general position set of $G$. The \emph{general position number of $G$}, written as $\gp(G)$, is the largest cardinality of a general position set. 

Klav\v{z}ar, Rall and Yero \cite{MR4341189} generalized the notion of general position set as follows. For an integer $d\geq 1$, we say that a set of vertices $S\subseteq V(G)$ is a \emph{general $d$-position set in $G$} provided that
\[\text{for all $g\in\G(G)$, if $|S\cap V(g)|\geq 3$ then $\lambda(g)>d$}.\]
The \emph{general $d$-position problem for $G$} is to find a largest general $d$-position set in $G$, and the \emph{general $d$-position number of $G$}, written as $\gp_d(G)$, is the largest cardinality of a general $d$-position set. In the current article, we study a further generalization of general position sets.

\begin{definition} For integers $d$ and $k$ with $d\geq 1$ and $k\geq 2$, we say that a set of vertices $S\subseteq V(G)$ is a \emph{$k$-general $d$-position set in $G$}, or is \emph{in $k$-general $d$-position in $G$} (where \emph{in $G$} may be omitted when the context is clear) provided that
\begin{align}\text{for all $g\in\G(G)$, if $|S\cap V(g)|\geq k$ then $\lambda(g)>d$.}\label{definition_kgdp}\end{align}
The \emph{$k$-general $d$-position problem for $G$} is to find a largest $k$-general $d$-position set in $G$. The \emph{$k$-general $d$-position number of $G$}, denoted by $\gp^k_d(G)$, is the largest cardinality of a $k$-general $d$-position set in $G$.
\end{definition}

A few easy observations are in order. Recall that a set of vertices of a graph is \emph{independent} if no edge of the graph connects two vertices of the set. The \emph{independence number of $G$}, written as $\alpha(G)$, is the largest size of an independent set in $G$. It is easy to see that a set of vertices $A$ is $2$-general $1$-position in $G$ if and only if it is independent. Thus $\gp^2_1(G)=\alpha(G)$, and since a set being in $2$-general $d$-position, for some $d\geq 1$, implies that it is in $2$-general $1$-position, it follows that $2$-general $d$-position sets can be thought of as having a strong form of independence. Indeed, a set of vertices is in $2$-general $d$-position if and only if it is \emph{$d$-distance independent} in the terminology of \cite{MR3818423}. Hence, the \emph{$d$-distance independence number $\alpha_d(G)$} \cite{MR3818423}, which is the largest size of a $d$-distance independent set, satisfies
\[\gp^2_d(G)=\alpha_d(G).\]

For $d\geq 1$, since a set of vertices $S$ is a $3$-general $d$-position set in $G$ if and only if it is a general $d$-position set, we have
\[\gp^3_d(G)=\gp_d(G).\]
Furthermore, for $k\geq 2$ and $d\geq\diam(G)$, a set of vertices $S\subseteq V(G)$ is a $k$-general $d$-position set in $G$ if and only if 
\begin{align}\text{for all $g\in\G(G)$ we have $|S\cap V(g)|<k$}\label{equation_gkp}\end{align}
(note the independence of this notion from $d$). Thus,
\[\text{if $d\geq \diam(G)$ then $\gp^k_d(G)=\gp^k_{\diam(G)}(G)$}.\]
and we will often write $\gp^k(G)$ to mean $\gp^k_{\diam(G)}(G)$ in this case to emphasize the independence from $d$. Indeed, a set $S\subseteq V(G)$ is said to be a \emph{$k$-general position set} provided that it is a $k$-general $\diam(G)$-position set in $G$, i.e., (\ref{equation_gkp}) holds.

Notice that if $G$ has at least $k-1$ vertices then $\gp^k(G)\geq k-1$. For $d\geq 1$, $k\geq 2$ and $n\geq 1$ we have
\[\gp^k_d(K_n)=\begin{cases}
    1 & \text{if $k=2$}\\
    n & \text{if $k\geq 3$,}
\end{cases}\]
where $K_n$ is the complete graph on $n$ vertices.

In Section \ref{section_general}, we discuss some general properties of $k$-general $d$-position sets, some of which will be used throughout the paper. We establish an ordering that holds between the values of $\gp^k_d(G)$ in general (see Proposition \ref{proposition_lattice} and Figure \ref{figure_lattice}). We prove some results on $k$-general $d$-position sets in infinite graphs (see Corollary \ref{corollary_infinite}, Example \ref{example_infinite} and Proposition \ref{proposition_infinite}). In Lemma \ref{lemma_isometric_upper_bound}, we generalize Klav\v{z}ar and Manuel's \emph{isometric cover lemma} \cite[Theorem 3.1]{MR3849577} from general position sets to $k$-general $d$-position sets to obtain an upper bound on $\gp^k_d(G)$ in terms of the $k$-general $d$-position numbers of certain subgraphs. In Lemma \ref{lemma_isometric_lower_bound}, we generalize a result of Klav\v{z}ar, Rall and Yero \cite[Proposition 1.1]{MR4341189} to obtain an upper bound on $\gp^k_d(G)$ in terms of the $k$-general $d$-position number of certain subgraphs of $G$.

Klav\v{z}ar, Rall and Yero \cite{MR4341189} gave a formula for $\gp_d(P_n)$, where $P_n$ is a path on $n$ vertices, and $\gp_d(C_n)$, where $C_n$ is an $n$-cycle (see Proposition \ref{proposition_KRY_paths} and Remark \ref{remark_mistake}, respectively). Generalizing these results, in Section \ref{section_paths}, we prove Theorem \ref{theorem_paths} which provides a formula for $\gp^k_d(P_n)$ and in Section \ref{section_cycles}, we prove Theorem \ref{theorem_formula_for_C_n} which gives a formula for $\gp^k_d(C_n)$. Let us point out that the $k$-general $d$-position sets in $C_n$ taking on the maximal cardinality $\gp^k_d(C_n)$ are the maximally even sets, which were introduced by Clough and Douthett \cite{CloughDouthett} in their work on music theory and further studied in \cite{BCFW, BCL, MR2512671,  MR2366388, MR2408358, Taslakian_dissertation, MR2212108}. In Remark \ref{remark_mistake} we point out a minor error in the formula for $\gp_d(C_n)$ given in \cite{MR4341189}.

Let $P_\infty$ denote the two-way infinite path graph. Recall that the \emph{Cartesian product} of two graphs $G$ and $H$ is the 
graph $G\,\Box\, H$, with vertex set $V(G\,\Box\, H)=V(G)\times V(H)$, where two vertices $(u,v)$ and $(u',v')$ of $G\,\Box\, H$ are adjacent if $u=u'$ and $vv'\in E(H)$ or $uu'\in E(G)$ and $v=v'$. We let $P_\infty^{\Box, 2}=P_\infty\cprod P_\infty$ denote the usual cartesian product of two infinite paths. Klav\v{z}ar and Manuel \cite{MR3879620} introduced the notion of \emph{monotone-geodesic labeling} to prove that $\gp(P_\infty^{\Box,2})=4$. In Section \ref{section_infinite_2d_grids}, we generalize Klav\v{z}ar and Manuel's notion to that of \emph{$k$-monotone-geodesic labeling} (Definition \ref{definition_k_mon_geo_lab}) and prove Corollary \ref{corollary_infinite_grid}, which states that $\gp^k(P_\infty^{\Box,2})=(k-1)^2$.

In Section \ref{section_thin_grids}, we provide a partial answer to a question posed by Klav\v{z}ar, Rall and Yero, \cite[Section 6.1]{MR4341189}: they asked for the value of $\gp_d(P_n\cprod P_m)$ for $d\geq 3$. In Section \ref{section_thin_grids}, we provide formulas for $\gp^k_d(P_n\cprod P_2)$ for all meaningful values of $d, k$ and $n$.

In Section \ref{section_questions}, we discuss several open questions.

\section{General results on $k$-general $d$-position sets}\label{section_general}

Let us begin this section by pointing out that when $k$ is large enough so that $k$ vertices will not fit into a geodesic of length $d$, then every set of vertices is $k$-general $d$-position and hence $\gp^k_d(G)=|V(G)|$.

\begin{lemma}\label{lemma_all_vertices}
    Suppose $G$ is a finite connected graph. Let $d$ and $k$ be integers such that $2\leq k\leq \diam(G)+1$ and $1\leq d\leq k-2$. Then $V(G)$ is a $k$-general $d$-position set in $G$ and hence
    \[\gp^k_d(G)=|V(G)|.\]
\end{lemma}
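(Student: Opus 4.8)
The plan is to unwind the definition of a $k$-general $d$-position set applied to the full vertex set $S=V(G)$, and to observe that every geodesic which could violate the defining condition (\ref{definition_kgdp}) is automatically too long to cause a problem, given the hypotheses relating $d$ and $k$.

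First I would fix an arbitrary geodesic $g\in\G(G)$ and examine $|V(G)\cap V(g)|$. Since $V(g)\subseteq V(G)$, we have $V(G)\cap V(g)=V(g)$, so the hypothesis of (\ref{definition_kgdp}) for the set $V(G)$ reads simply $|V(g)|\geq k$. Using the identity $|V(g)|=\lambda(g)+1$ recorded in the introduction, this is equivalent to $\lambda(g)\geq k-1$.

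The key step is then a one-line length comparison. If $\lambda(g)\geq k-1$, then since $d\leq k-2$ by hypothesis we obtain $\lambda(g)\geq k-1>k-2\geq d$, hence $\lambda(g)>d$. This verifies the implication in (\ref{definition_kgdp}) for every $g\in\G(G)$, so $V(G)$ is a $k$-general $d$-position set in $G$. Finally, since no subset of $V(G)$ can have larger cardinality than $V(G)$ itself, the set $V(G)$ is a largest $k$-general $d$-position set, giving $\gp^k_d(G)=|V(G)|$.

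I do not expect a genuine obstacle here: the entire content lies in tracking the defining inequality through the identity $|V(g)|=\lambda(g)+1$. The hypothesis $k\leq\diam(G)+1$ plays no role in the verification itself; it merely ensures that the stated regime is nontrivial, since a geodesic realizing the diameter contains $\diam(G)+1\geq k$ vertices and so the condition $|V(g)|\geq k$ can actually be met. I would remark on this so that the appearance of the bound $k\leq\diam(G)+1$ in the statement is explained.
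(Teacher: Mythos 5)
Your proof is correct and follows exactly the paper's argument: the paper's entire proof is the observation that any geodesic containing at least $k$ vertices has $\lambda_G(g)\geq k-1>d$, which is precisely your key length comparison, merely written more tersely. Your additional remark explaining the role of the hypothesis $k\leq\diam(G)+1$ (ensuring the regime is nonvacuous) is a nice clarification but not part of the paper's proof.
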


\begin{proof}
If $g$ is a geodesic of $G$ containing at least $k$ vertices of $V(G)$, then $\lambda_G(g)\geq k-1>d$.
\end{proof}

An ordering among the quantities $\gp^k_d(G)$ for various values of $k\geq 2$ and $d\geq 1$ that holds in general for all graphs can be established as follows. Notice that if $S$ is a $k$-general $d$-position set, one can decrease $d$, increase $k$ or do both, and the definition will still be satisfied by $S$; put another way, if $S$ is a $k$-general $d$-position set, $k'\geq k$ and $d'\leq d$, then $S$ is a $k'$-general $d'$-position set. Hence, assuming $D\leq \diam(G)$ is finite, we obtain the lattice of inequalities displayed in Figure \ref{figure_lattice}, where the chain of inequalities in the second column of Figure \ref{figure_lattice} was previously noted \cite[Section 1]{MR4341189}. Thus, after introducing the parameter $k$, the values of the general $d$-position number $\gp_d(G)=\gp^3_d(G)$ studied in \cite{MR4341189} and the values of the $d$-distance independence number $\alpha_d(G)=\gp^2_d(G)$ studied in \cite{MR3818423}, can be seen as a one dimensional slices of a more general two-dimensional phenomenon.

\begin{proposition}\label{proposition_lattice}
Suppose $G$ is a graph. Let $1\leq d,d'\leq \diam(G)$ and $k,k'\geq 2$ be integers such that $d'\leq d$ and $k'\geq k$. Then $\gp^k_d(G)\leq\gp^{k'}_{d'}(G)$ (see Figure \ref{figure_lattice}).
\end{proposition}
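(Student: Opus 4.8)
The plan is to reduce the statement to the single structural observation already highlighted in the paragraph preceding the proposition, namely that the defining condition (\ref{definition_kgdp}) is monotone in both parameters: it becomes weaker as $k$ increases and as $d$ decreases. Concretely, I will first prove the set-theoretic containment claim that every $k$-general $d$-position set is automatically a $k'$-general $d'$-position set whenever $k' \geq k$ and $d' \leq d$, and then deduce the inequality of the corresponding position numbers by a maximality argument.

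For the containment, suppose $S \subseteq V(G)$ is in $k$-general $d$-position, and fix an arbitrary geodesic $g \in \G(G)$ with $|S \cap V(g)| \geq k'$. Since $k' \geq k$, we also have $|S \cap V(g)| \geq k$, so the defining property of $S$ forces $\lambda(g) > d$. Because $d \geq d'$, this yields $\lambda(g) > d'$. As $g$ was arbitrary, $S$ satisfies (\ref{definition_kgdp}) with the parameters $k'$ and $d'$ in place of $k$ and $d$; that is, $S$ is in $k'$-general $d'$-position.

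To finish, let $S$ be a $k$-general $d$-position set of maximum cardinality, so that $|S| = \gp^k_d(G)$. By the containment just established, $S$ is also a $k'$-general $d'$-position set, and hence $|S| \leq \gp^{k'}_{d'}(G)$ by the definition of the latter quantity as the largest cardinality of such a set. Combining these observations gives $\gp^k_d(G) = |S| \leq \gp^{k'}_{d'}(G)$, which is exactly the claimed inequality.

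I do not anticipate any substantive obstacle here: the entire content of the proposition is the monotonicity of the quantified implication in (\ref{definition_kgdp}), and the only point requiring a little care is to chain the two parameter inequalities in the correct directions, with $k' \geq k$ applied on the hypothesis side of the implication and $d \geq d'$ applied on its conclusion side. The remaining hypotheses $1 \leq d', d \leq \diam(G)$ and $k, k' \geq 2$ play no essential role beyond guaranteeing that $\gp^k_d(G)$ and $\gp^{k'}_{d'}(G)$ are well-defined position numbers of the kind under discussion.
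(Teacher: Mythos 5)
Your proof is correct and is essentially the paper's own argument: the paper justifies this proposition by exactly the monotonicity observation you formalize (stated in the paragraph immediately preceding it, namely that a $k$-general $d$-position set remains one after increasing $k$ or decreasing $d$), and leaves the final cardinality comparison implicit. The only cosmetic refinement would be to bound $|S|\leq\gp^{k'}_{d'}(G)$ for an \emph{arbitrary} $k$-general $d$-position set $S$ and then take the supremum, rather than selecting a maximum-cardinality $S$, which sidesteps any question of the maximum being attained when $G$ is infinite (a case in which the paper does apply this proposition, in Corollary \ref{corollary_infinite}).
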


Klav\v{z}ar, Rall and Yero \cite[Proposition 6.1]{MR4341189} proved that if $G$ is an infinite graph and $1\leq d<\infty$, then $\gp^3_d(G)=\infty$. Hence, we obtain the following corollary of \cite[Proposition 6.1]{MR4341189} and Proposition \ref{proposition_lattice}, which implies that the problem of determining $\gp^k_d(G)$ is trivial for infinite graphs when $k\geq 3$.

\begin{corollary}\label{corollary_infinite}
Suppose $G$ is an infinite graph and let $d\geq 2$ and $k\geq 3$ be integers. Then $\gp^k_d(G)=\infty$.
\end{corollary}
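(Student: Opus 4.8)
The plan is to deduce this immediately from the two inputs cited in the surrounding text: the Klav\v{z}ar--Rall--Yero result \cite[Proposition 6.1]{MR4341189} that $\gp^3_d(G)=\infty$ for infinite $G$ and finite $d$, together with the monotonicity of $\gp^k_d$ in the parameter $k$. The guiding point is that enlarging $k$ only \emph{relaxes} the defining condition~\eqref{definition_kgdp}, so any set witnessing a large value of $\gp^3_d$ automatically witnesses an at-least-as-large value of $\gp^k_d$.

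First I would record the monotonicity in precisely the form needed here, which is the set-level observation made in the paragraph preceding Proposition~\ref{proposition_lattice}: if $S$ is a $3$-general $d$-position set and $k\geq 3$, then $S$ is a $k$-general $d$-position set. Indeed, suppose $g\in\G(G)$ satisfies $|S\cap V(g)|\geq k$. Since $k\geq 3$, this gives $|S\cap V(g)|\geq 3$, and the $3$-general $d$-position hypothesis on $S$ then forces $\lambda(g)>d$; hence $S$ satisfies~\eqref{definition_kgdp} for the parameter $k$. Consequently the collection of $k$-general $d$-position sets of $G$ contains every $3$-general $d$-position set of $G$, and therefore $\gp^3_d(G)\leq\gp^k_d(G)$.

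Next I would invoke \cite[Proposition 6.1]{MR4341189}: since $G$ is infinite and $2\leq d<\infty$, we have $\gp^3_d(G)=\infty$. Combining this with the inequality just established yields $\gp^k_d(G)\geq\gp^3_d(G)=\infty$, and hence $\gp^k_d(G)=\infty$, as claimed.

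There is essentially no genuine obstacle here: the statement is a formal consequence of the two inputs, and the only matter requiring a moment's care is which monotonicity to cite. I would deliberately use the unconditional set-containment form of monotonicity rather than quoting Proposition~\ref{proposition_lattice} verbatim, because the latter is stated under the hypothesis $d\leq\diam(G)$, which can fail for an infinite graph of finite diameter (for instance the infinite star $K_{1,\infty}$, whose diameter is $2$). The set-level monotonicity in $k$ used above, by contrast, holds with no assumption on the diameter whatsoever, so the argument goes through uniformly for every infinite graph $G$.
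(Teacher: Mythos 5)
Your proof is correct and takes essentially the same route as the paper, which obtains Corollary \ref{corollary_infinite} precisely by combining \cite[Proposition 6.1]{MR4341189} with the monotonicity of $\gp^k_d(G)$ in $k$ (Proposition \ref{proposition_lattice}). Your decision to invoke the unconditional set-level form of monotonicity rather than Proposition \ref{proposition_lattice} verbatim is a sound (and arguably sharper) refinement, since that proposition's stated hypothesis $d\leq\diam(G)$ can fail for infinite graphs of finite diameter, a case the paper's own citation glosses over.
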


However, when $k=2$, the quantity $\gp^2_d(G)$ is not infinite for all infinite graphs. For example, when $K_\kappa$ is an infinite complete graph with cardinality $|V(G)|=\kappa$ we have $\gp^2_d(K_\kappa)=1$. Evidently, as demonstrated in the next example, the $2$-general $d$-position number of infinite graphs with cardinality $\kappa$ can take on many different values. 

\begin{example}\label{example_infinite}
Let $G$ be the graph obtained by connecting one of the endpoints of $P_n$ with one of the vertices $v$ of $K_\kappa$. Since $\gp^2_d(P_n)=\ceil{\frac{n}{d+1}}$ (this follows directly from Theorem \ref{theorem_paths}), and since we can essentially treat $v$ and $V(K_n)\setminus \{v\}$ as being two additional vertices, we have \[\gp^2_d(G)=\gp^2_d(P_{n+2})=\ceil{\frac{n+2}{d+1}}.\]

\end{example}

However, for some infinite graphs, the problem of finding the $2$-general $d$-position number does trivialize, as indicated in the next---self-evident---proposition.

\begin{proposition}\label{proposition_infinite}
Suppose that either the diameter of $G$ is infinite or $G$ has infinitely many connected components. Then $\gp^2_d(G)=\infty$ for all $d\geq 1$.
\end{proposition}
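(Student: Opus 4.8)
The plan is to reduce the statement to producing a single \emph{infinite} set of vertices that are pairwise at distance greater than $d$. By the discussion preceding this proposition, a set $S\subseteq V(G)$ is in $2$-general $d$-position exactly when it is $d$-distance independent, i.e.\ $d_G(u,v)>d$ for all distinct $u,v\in S$ (if two such vertices shared a geodesic $g$, then $\lambda_G(g)\geq d_G(u,v)>d$, so the defining condition holds). Thus it suffices to build an infinite $S$ with all pairwise distances exceeding $d$, for then $\gp^2_d(G)=\card{S}=\infty$. I would construct $S=\{v_1,v_2,\ldots\}$ greedily by recursion: choose $v_1$ arbitrarily, and having chosen $v_1,\ldots,v_n$ with pairwise distances $>d$, attempt to choose $v_{n+1}$ with $d_G(v_{n+1},v_i)>d$ for every $i\leq n$.

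The heart of the argument is to show that this recursion never terminates under either hypothesis. The only way to get stuck at stage $n$ is that every vertex lies within distance $d$ of some $v_i$, i.e.\ $V(G)=\bigcup_{i=1}^{n}B_i$ where $B_i=\{w\in V(G):d_G(w,v_i)\leq d\}$. I claim this forces $G$ to have both finitely many components and finite diameter, contradicting the disjunction in the hypothesis. Indeed, each $B_i$ is contained in the component of $v_i$, so $n$ such balls can cover $V(G)$ only if $G$ has at most $n$ components. Moreover, fixing a component $C$ and taking arbitrary $x,y\in C$, we have $x\in B_i$ and $y\in B_j$ for some $v_i,v_j$ necessarily lying in $C$, whence the triangle inequality gives $d_G(x,y)\leq 2d+d_G(v_i,v_j)\leq 2d+M$, where $M$ is the maximum of $d_G(v_i,v_j)$ over the finitely many pairs lying in a common component (a finite number). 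Hence every component has diameter at most $2d+M$, and with finitely many components this bounds $\diam(G)$. This contradiction shows the recursion continues forever, yielding the desired infinite set $S$.

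It is worth noting that this single greedy construction handles both hypotheses at once: ``getting stuck'' forces $G$ to be a finite-diameter graph with finitely many components, which is exactly the situation excluded by assuming either that the diameter is infinite or that there are infinitely many components. I expect the only mildly delicate point to be the diameter estimate above, and in particular the observation that no local-finiteness assumption is needed: even when the balls $B_i$ are infinite, the obstruction argument uses only that finitely many balls covering $V(G)$ simultaneously cap the number of components and the diameter. In the infinitely-many-components case one may of course bypass the estimate entirely, simply selecting one vertex from each of infinitely many distinct components to obtain an $S$ whose elements are pairwise at infinite distance and so meet every geodesic in at most one vertex.
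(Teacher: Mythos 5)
Your proposal is correct; in fact it supplies an argument where the paper gives none, since the authors label this proposition ``self-evident'' and offer no proof. The implicit argument they have in mind is presumably the two-case one you mention at the end: in the infinite-diameter case, greedily pick vertices pairwise at distance greater than $d$; in the infinitely-many-components case, pick one vertex per component, so that no geodesic (being a path, hence confined to one component) contains two chosen vertices. What your unified version buys is a clean explanation of \emph{why} the greedy selection never halts: getting stuck means finitely many balls of radius $d$ cover $V(G)$, which simultaneously caps the number of components and the diameter of each component, and your observation that no local-finiteness hypothesis is needed is exactly right. One point worth making explicit: your diameter estimate bounds the diameter of each \emph{component} by $2d+M$, and you then say this ``bounds $\diam(G)$''; under the common convention that a disconnected graph has infinite diameter this step (and indeed the proposition itself) would fail --- two isolated vertices have $\diam(G)=\infty$ but $\gp^2_d(G)=2$. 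So both your proof and the statement require reading ``the diameter of $G$ is infinite'' as asserting that distances between vertices in a common component are unbounded (equivalently, some component has infinite diameter, or component diameters are unbounded, the latter being subsumed by the infinitely-many-components case). With that reading, which is clearly the intended one given that the proposition lists the two hypotheses as separate alternatives, your argument is complete.
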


\begin{figure}
\begin{center}
\resizebox{\textwidth}{!}{
\begin{tikzpicture}
\def \a {1.2};

\node[] at (0*\a,0) {$\gp(G)$};
\node[] at (1*\a,0) {$\leq$};
\node[] at (2*\a,0) {$\gp^4_D(G)$};
\node[] at (3*\a,0) {$\leq$};
\node[] at (4*\a,0) {$\gp^5_D(G)$};
\node[] at (5*\a,0) {$\leq$};
\node[] at (6*\a,0) {$\cdots$};
\node[] at (7*\a,0) {$\leq$};
\node[] at (8*\a,0) {$\gp^D_D(G)$};
\node[] at (9*\a,0) {$\leq$};
\node[] at (10*\a,0) {$\gp^{D+1}_D(G)$};
\node[] at (11*\a,0) {$\leq$};
\node[] at (11.5*\a,0) {$|G|$};

\node[rotate=90] at (0*\a,0.5) {$\leq$};
\node[rotate=90] at (2*\a,0.5) {$\leq$};
\node[rotate=90] at (4*\a,0.5) {$\leq$};
\node[rotate=90] at (8*\a,0.5) {$\leq$};
\node[rotate=90] at (10*\a,0.5) {$\leq$};

\node[] at (0*\a,1) {$\gp^3_{D-1}(G)$};
\node[] at (1*\a,1) {$\leq$};
\node[] at (2*\a,1) {$\gp^4_{D-1}(G)$};
\node[] at (3*\a,1) {$\leq$};
\node[] at (4*\a,1) {$\gp^5_{D-1}(G)$};
\node[] at (5*\a,1) {$\leq$};
\node[] at (6*\a,1) {$\cdots$};
\node[] at (7*\a,1) {$\leq$};
\node[] at (8*\a,1) {$\gp^D_{D-1}(G)$};
\node[]  at (9*\a,1) {$\leq$};
\node[] at (10*\a,1) {$|G|$};

\node[rotate=90] at (0*\a,1.5) {$\leq$};
\node[rotate=90] at (2*\a,1.5) {$\leq$};
\node[rotate=90] at (4*\a,1.5) {$\leq$};
\node[rotate=90] at (8*\a,1.5) {$\leq$};

\node[] at (0*\a,2) {$\gp^3_{D-2}(G)$};
\node[] at (1*\a,2) {$\leq$};
\node[] at (2*\a,2) {$\gp^4_{D-2}(G)$};
\node[] at (3*\a,2) {$\leq$};
\node[] at (4*\a,2) {$\gp^5_{D-2}(G)$};
\node[] at (5*\a,2) {$\leq$};
\node[] at (6*\a,2) {$\cdots$};
\node[] (L1) at (7*\a,2) {$\leq$};
\node[] at (8*\a,2) {$|G|$};

\node[rotate=90] at (0*\a,2.5) {$\leq$};
\node[rotate=90] at (2*\a,2.5) {$\leq$};
\node[rotate=90] at (4*\a,2.5) {$\leq$};

\node[] at (0*\a,3.2) {$\vdots$};
\node[] at (2*\a,3.2) {$\vdots$};
\node[] at (4*\a,3.2) {$\vdots$};
\node[rotate=45] at (6*\a,3) {$\vdots$};

\node[rotate=90] at (0*\a,3.6) {$\leq$};
\node[rotate=90] at (2*\a,3.6) {$\leq$};
\node[rotate=90] at (4*\a,3.6) {$\leq$};
\node[rotate=90] at (6*\a,3.6) {$\leq$};

\node[] at (0*\a,4.2) {$\gp^3_4(G)$};
\node[] at (1*\a,4.2) {$\leq$};
\node[] at (2*\a,4.2) {$\gp^4_4(G)$};
\node[] at (3*\a,4.2) {$\leq$};
\node[] at (4*\a,4.2) {$\gp^5_4(G)$};
\node[] (L2) at (5*\a,4.2) {$\leq$};
\node[] at (6*\a,4.2) {$|G|$};

\node[rotate=90] at (0*\a,4.7) {$\leq$};
\node[rotate=90] at (2*\a,4.7) {$\leq$};
\node[rotate=90] at (4*\a,4.7) {$\leq$};


\node[] at (0*\a,5.2) {$\gp^3_3(G)$};
\node[] at (1*\a,5.2) {$\leq$};
\node[] at (2*\a,5.2) {$\gp^4_3(G)$};
\node[] at (3*\a,5.2) {$\leq$};
\node[] at (4*\a,5.2) {$|G|$};

\node[rotate=90] at (0*\a,5.7) {$\leq$};
\node[rotate=90] at (2*\a,5.7) {$\leq$};

\node[] at (0*\a,6.2) {$\gp^3_2(G)$};
\node[] at (1*\a,6.2) {$\leq$};
\node[] at (2*\a,6.2) {$|G|$};

\node[rotate=90] at (0*\a,6.7) {$\leq$};

\node[] at (0*\a,7.2) {$|G|$};


\draw[rounded corners] (-0.9, -0.5) rectangle (0.9, 7.7) {};

\node[] at (-2*\a,0) {$\gp^2_D(G)$};
\node[rotate=90] at (-2*\a,0.5) {$\leq$};
\node[] at (-2*\a,1) {$\gp^2_{D-1}(G)$};
\node[rotate=90] at (-2*\a,1.5) {$\leq$};
\node[] at (-2*\a,2) {$\gp^2_{D-2}(G)$};
\node[rotate=90] at (-2*\a,2.5) {$\leq$};
\node[] at (-2*\a,3.2) {$\vdots$};
\node[rotate=90] at (-2*\a,3.6) {$\leq$};
\node[] at (-2*\a,4.2) {$\gp^2_4(G)$};
\node[rotate=90] at (-2*\a,4.7) {$\leq$};
\node[] at (-2*\a,5.2) {$\gp^2_3(G)$};
\node[rotate=90] at (-2*\a,5.7) {$\leq$};
\node[] at (-2*\a,6.2) {$\gp^2_2(G)$};
\node[rotate=90] at (-2*\a,6.7) {$\leq$};
\node[] at (-2*\a,7.2) {$\gp^2_1(G)$};
\node[rotate=90] at (-2*\a,7.7) {$\leq$};
\node[] at (-2*\a,8.2) {$|G|$};

\node[] at (-1*\a,0) {$\leq$};
\node[] at (-1*\a,1) {$\leq$};
\node[] at (-1*\a,2) {$\leq$};
\node[] at (-1*\a,4.2) {$\leq$};
\node[] at (-1*\a,5.2) {$\leq$};
\node[] at (-1*\a,6.2) {$\leq$};
\node[] at (-1*\a,7.2) {$\leq$};

\end{tikzpicture}
}
\end{center}
\caption{A lattice of inequalities holds in general.}
\label{figure_lattice}
\end{figure}
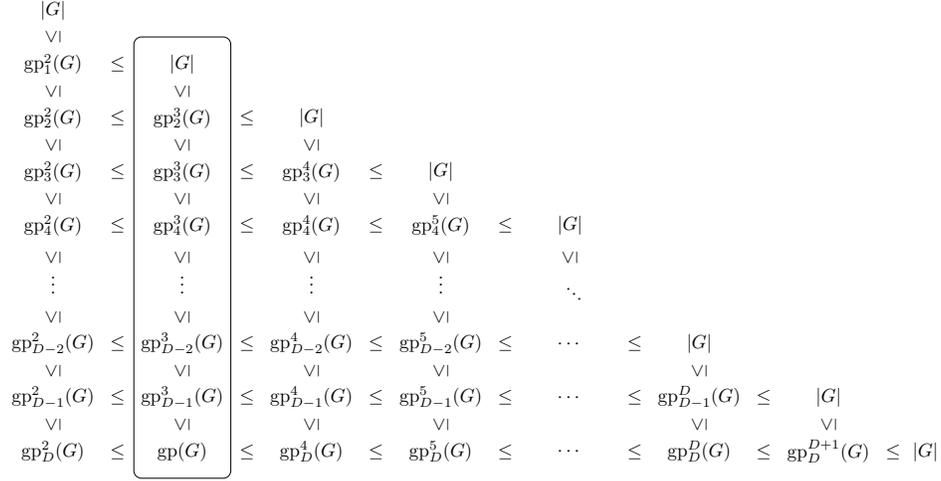

Let us now generalize Klav\v{z}ar and Manuel's \emph{isometric cover lemma} \cite[Theorem 3.1]{MR3849577} from the general position number to the $k$-general $d$-position number. A subgraph $H$ of a graph $G$ is \emph{convex subgraph of $G$} if it includes every shortest path in $G$ between two of its vertices. A subgraph $H$ of $G$ is said to be an \emph{isometric subgraph of $G$} if $d_H(u,v)=d_G(u,v)$ for all $u,v\in V(H)$. Clearly, every convex subgraph of a graph $G$ is an isometric subgraph.

\begin{lemma}\label{lemma_isometric_down_convex_down_and_up}
    Suppose $G$ is a connected graph and $H$ is a subgraph of $G$. Let $d\geq 1$ and $k\geq 2$ be integers.
    \begin{enumerate}
        \item If $H$ is an isometric subgraph of $G$ then whenever $A\subseteq V(H)$ is a $k$-general $d$-position set in $G$, it follows that $A$ is a $k$-general $d$-position set in $H$.
        \item If $H$ is a convex subgraph of $G$ then for all $A\subseteq V(H)$, $A$ is a $k$-general $d$-position set in $G$ if and only if it is a $k$-general $d$-position set in $H$.
    \end{enumerate}
\end{lemma}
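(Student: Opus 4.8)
The plan is to work throughout with the contrapositive form of the defining condition~(\ref{definition_kgdp}): a set $A$ \emph{fails} to be in $k$-general $d$-position in a graph $G$ precisely when there is a geodesic $g\in\G(G)$ with $\lambda_G(g)\le d$ and $|A\cap V(g)|\ge k$. I will call such a $g$ a \emph{witness} for the failure of $A$ in $G$. Since every convex subgraph is isometric, the forward implication of part~(2) is an immediate instance of part~(1), so only two things really need proving: part~(1), and the single nontrivial implication of part~(2).

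For part~(1), suppose $H$ is isometric and $A\subseteq V(H)$ is in $k$-general $d$-position in $G$; I want to rule out a witness for $A$ in $H$. If $g$ were such a witness, it would be a shortest path in $H$ between two vertices $u,v\in V(H)$, so $\lambda_H(g)=d_H(u,v)$. Viewing $g$ as a path in $G$ (as $H$ is a subgraph), its length equals $d_H(u,v)=d_G(u,v)$ by isometry, so $g$ is in fact a geodesic of $G$ of the same length $\le d$ containing the same $\ge k$ vertices of $A$. This contradicts $A$ being in $k$-general $d$-position in $G$, completing part~(1).

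The harder direction, and what I expect to be the main obstacle, is the reverse implication of part~(2): from a witness in $G$ I must produce a witness in $H$, even though the endpoints of the $G$-witness need not lie in $H$. So suppose $H$ is convex and $A\subseteq V(H)$ is in $k$-general $d$-position in $H$, but $g\in\G(G)$ is a witness for $A$ in $G$. The fix is to \emph{trim} $g$ to the portion spanned by $A$. Order the vertices of $g$ linearly from one endpoint to the other and let $a,b\in A\cap V(g)$ be, respectively, the first and last members of $A$ encountered along $g$; these exist and are distinct since $|A\cap V(g)|\ge k\ge 2$, and they lie in $V(H)$ because $A\subseteq V(H)$. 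Let $g'$ be the subpath of $g$ from $a$ to $b$. A subpath of a geodesic is again a geodesic, so $g'$ is a shortest $a$--$b$ path in $G$ with $\lambda_G(g')\le\lambda_G(g)\le d$, and by the extremal choice of $a$ and $b$ we have $A\cap V(g')=A\cap V(g)$, whence $|A\cap V(g')|\ge k$.

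It remains to transport $g'$ into $H$. Since $a,b\in V(H)$ and $H$ is convex, $H$ contains the shortest path $g'$; in particular all vertices and edges of $g'$ lie in $H$, so $g'$ is a path in $H$, and $\lambda_H(g')=\lambda_G(g')\le d$. Moreover $g'$ is a geodesic of $H$: its length is $d_G(a,b)$, which equals $d_H(a,b)$ because convexity implies isometry. Thus $g'\in\G(H)$ is a witness for $A$ in $H$, contradicting our hypothesis. This contradiction establishes the reverse implication and hence all of part~(2).
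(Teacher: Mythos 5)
Your proof is correct and takes essentially the same approach as the paper's: part (1) lifts an $H$-geodesic to a $G$-geodesic via isometry, and the nontrivial direction of part (2) trims a $G$-geodesic to the subpath between the first and last vertices of $A$ along it, then uses convexity to see this subpath is a geodesic of $H$. The only difference is that you phrase both steps contrapositively (via ``witnesses''), while the paper argues directly from the definition; the substance is identical.
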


\begin{proof}
Suppose $H$ is an isometric subgraph of $G$ and $A\subseteq V(H)$ is a $k$-general $d$-position set in $G$. Suppose $g\in\G(H)$ is a geodesic of $H$ with $|A\cap g|\geq k$. Since $H$ is an isometric subgraph of $G$, we see that $g\in\G(G)$ and furthermore, since $A$ is a $k$-general $d$-position set in $G$ we have $\lambda_G(g)>d$. Again, applying the fact that $H$ is an isometric subgraph of $G$, we have $\lambda_H(g)=\lambda_G(g)>\lambda$, as desired.

Suppose $H$ is a convex subgraph of $G$ and $A\subseteq V(H)$. Since $H$ is an isometric subgraph of $G$, it follows from (1) that if $A$ is a $k$-general $d$-position set in $G$ then it is a $k$-general $d$-position set in $H$. For the converse, suppose $A$ is a $k$-general $d$-positions set in $H$. Let $g\in\G(G)$ is a geodesic of $G$ with $|A\cap g|\geq k$. Let $g=\{v_1,v_2\ldots,v_\ell\}$ where $v_1v_2\cdots v_\ell$ is a shortest path in $G$. Let $a$ be the least index of an element of $A$ in $g$ and let $b$ be the greatest index of an element of $A$ in $g$. Then $v_av_{a+1}\cdots v_b$ is a shortest path in $G$ between two elements of $H$. Since $H$ is a convex subgraph of $G$, $v_av_{a+1}\cdots v_b$ is a shortest path in $H$ and so $g'=\{v_a,\ldots,v_b\}$ is a geodesic of $H$ such that $|A\cap g'|\geq k$. Since $A$ is a $k$-general $d$-position set in $H$ we have $\lambda_H(g')>d$, and since $H$ is an isometric subgraph of $G$ we have $\lambda_G(g)\geq \lambda_G(g')=\lambda_H(g')>d$. So, $A$ is a $k$-general $d$-position set in $G$.
\end{proof}

Generalizing \cite[Theorem 3.1]{MR3849577}, we obtain the following.

\begin{lemma}\label{lemma_isometric_upper_bound}
Suppose $G$ is a finite connected graph and $\{H_1,\ldots,H_\ell\}$ is a collection of isometric subgraphs of $G$ such that $V(G)=\bigcup_{i=1}^\ell V(H_i)$. Suppose $d$ and $k$ are integers with $n\geq 1$ and $k\geq 2$. Then
\[\gp^k_d(G)\leq\sum_{i=1}^\ell \gp^k_d(H_i).\]
\end{lemma}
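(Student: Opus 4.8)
The plan is to take a largest $k$-general $d$-position set $S$ in $G$, distribute its vertices among the covering subgraphs $H_i$, and bound each resulting piece using Lemma \ref{lemma_isometric_down_convex_down_and_up}(1). First I would fix $S\subseteq V(G)$ with $|S|=\gp^k_d(G)$ that is in $k$-general $d$-position in $G$. Since $V(G)=\bigcup_{i=1}^\ell V(H_i)$, every vertex of $S$ lies in at least one $V(H_i)$; assigning each such vertex to exactly one of the subgraphs containing it—say the one of least index—produces a partition $S=S_1\sqcup\cdots\sqcup S_\ell$ with $S_i\subseteq V(H_i)$. Consequently $|S|=\sum_{i=1}^\ell|S_i|$, and it suffices to show $|S_i|\leq\gp^k_d(H_i)$ for each $i$.

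The next observation is that any subset of a $k$-general $d$-position set is again in $k$-general $d$-position, directly from Definition (\ref{definition_kgdp}): if $g\in\G(G)$ satisfies $|S_i\cap V(g)|\geq k$, then $|S\cap V(g)|\geq|S_i\cap V(g)|\geq k$, so $\lambda(g)>d$. Hence each $S_i$ is a $k$-general $d$-position set in $G$ that happens to lie entirely in $V(H_i)$. Because $H_i$ is an isometric subgraph of $G$, Lemma \ref{lemma_isometric_down_convex_down_and_up}(1) then transfers the property down to $H_i$, so $S_i$ is a $k$-general $d$-position set in $H_i$, and therefore $|S_i|\leq\gp^k_d(H_i)$. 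Summing over $i$ gives $\gp^k_d(G)=|S|=\sum_{i=1}^\ell|S_i|\leq\sum_{i=1}^\ell\gp^k_d(H_i)$, as claimed.

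I do not anticipate a serious obstacle: the real content is packaged in Lemma \ref{lemma_isometric_down_convex_down_and_up}(1), which is exactly what lets a globally defined $k$-general $d$-position set be restricted to an isometric subgraph without losing the defining property. The only point that requires a little care is to \emph{partition} $S$ rather than merely cover it, so that no vertex is counted in more than one subgraph; choosing a single host $H_i$ for each vertex of $S$ handles this cleanly. Finiteness of $G$ ensures that $S$ and all the numbers $\gp^k_d(H_i)$ are finite, so the final summation is literal and the inequality is the desired bound.
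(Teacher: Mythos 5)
Your proof is correct and takes essentially the same approach as the paper: both restrict a maximum $k$-general $d$-position set of $G$ to the subgraphs and invoke Lemma \ref{lemma_isometric_down_convex_down_and_up}(1) to bound each piece by $\gp^k_d(H_i)$. The only cosmetic difference is that you partition $S$ (assigning each vertex to a single host $H_i$), whereas the paper simply intersects with each $V(H_i)$ and uses subadditivity of the cover; both yield the same inequality.
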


\begin{proof}
Suppose $A$ is a $k$-general $d$-position set in $G$. By Lemma \ref{lemma_isometric_down_convex_down_and_up} (1), for all $i\in\{1,\ldots,\ell\}$, $A\cap V(H_i)$ is a $k$-general $d$-position set in $H_i$ and thus $|A\cap V(H_i)|\leq \gp^k_d(H_i)$. This implies that
\[|A|\leq\sum_{i=1}^\ell\gp^k_d(H_i),\]
as desired.
\end{proof}

Next, we generalize a result on general $d$-position sets due to Klav\v{z}ar, Rall and Yero \cite[Proposition 1.1]{MR4341189} to obtain a lower bound on $\gp^k_d(G)$ in terms of isometric subgraphs. Given a graph $G$ with subgraphs $H$ and $K$, we let 
\[d_G(H,K)=\min\{d_G(u,v)\st u\in V(H)\text{ and }v\in V(K)\}.\]

\begin{lemma}\label{lemma_isometric_lower_bound}
Suppose $d\geq 1$ and $k\geq 2$ are integers. Let $G$ be a finite connected graph and let $\{H_1,\ldots,H_\ell\}$ be a finite collection of isometric subgraphs of $G$ such that $d_G(H_i,H_j)\geq d$ for $i\neq j$. Then
\[\sum_{i=1}^\ell\gp^k_d(H_i)\leq\gp^k_d(G).\]
\end{lemma}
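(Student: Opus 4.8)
The plan is to realize the left-hand sum as the cardinality of a single $k$-general $d$-position set assembled from optimal sets in the pieces. For each $i\in\{1,\ldots,\ell\}$, I would fix a maximum $k$-general $d$-position set $A_i\subseteq V(H_i)$ in $H_i$, so that $|A_i|=\gp^k_d(H_i)$, and set $A=\bigcup_{i=1}^\ell A_i$. Since $d_G(H_i,H_j)\geq d\geq 1$ for $i\neq j$, the vertex sets $V(H_1),\ldots,V(H_\ell)$ are pairwise disjoint, so the $A_i$ are disjoint and $|A|=\sum_{i=1}^\ell\gp^k_d(H_i)$. It therefore suffices to show that $A$ is a $k$-general $d$-position set in $G$, since then $\gp^k_d(G)\geq|A|=\sum_{i=1}^\ell\gp^k_d(H_i)$.

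To verify this, I would take an arbitrary $g\in\G(G)$ with $|A\cap V(g)|\geq k$ and aim to show $\lambda_G(g)>d$. Suppose toward a contradiction that $\lambda_G(g)\leq d$, and write $g=v_1v_2\cdots v_m$ in order, so that $d_G(v_s,v_t)=|s-t|$ for all $s,t$ and $m-1=\lambda_G(g)\leq d$. Let $T=A\cap V(g)$, a set of at least $k$ vertices distributed among the blocks $A_i$. The argument then splits according to whether all of $T$ lies in one block or is spread among at least two.

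If two distinct blocks meet $T$, say $u\in A_i\cap V(g)$ and $w\in A_j\cap V(g)$ with $i\neq j$, then $d_G(u,w)\leq\lambda_G(g)\leq d$, while the separation hypothesis gives $d_G(u,w)\geq d_G(H_i,H_j)\geq d$. Hence $d_G(u,w)=d=\lambda_G(g)$, which forces $u,w$ to be the endpoints $v_1,v_m$. For $k\geq 3$ there is then a third vertex $x\in T$, necessarily an interior vertex $v_p$ with $1<p<m$, so $\min\{d_G(u,x),d_G(x,w)\}\leq\lambda_G(g)-1\leq d-1<d$; whichever of $u,w$ lies in a block different from the block of $x$ now violates the separation hypothesis, a contradiction. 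The only delicate point is the base case $k=2$, where $T=\{u,w\}$ can consist precisely of the two endpoints at distance exactly $d$; to force $\lambda_G(g)>d$ there one really wants the blocks separated by strictly more than $d$, whereas for $k\geq 3$ the interior vertex already supplies the contradiction from $d_G(H_i,H_j)\geq d$.

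The remaining—and technically central—case is when all of $T$ lies in a single block $A_i$, so $A_i\cap V(g)$ is a set of at least $k$ vertices of $H_i$ lying on the $G$-geodesic $g$. The obstacle is that $g$ itself need not lie inside $H_i$, so I cannot invoke part (2) of Lemma \ref{lemma_isometric_down_convex_down_and_up}; only isometry of $H_i$ is available. The key step is to manufacture a genuine $H_i$-geodesic through all of these vertices. Listing the vertices of $A_i\cap V(g)$ in the order they occur along $g$ as $v_{r_0},v_{r_1},\ldots,v_{r_t}$ with $t+1\geq k$, isometry gives $d_{H_i}(v_{r_a},v_{r_{a+1}})=d_G(v_{r_a},v_{r_{a+1}})=r_{a+1}-r_a$ for consecutive pairs, so telescoping yields $\sum_{a=0}^{t-1}d_{H_i}(v_{r_a},v_{r_{a+1}})=r_t-r_0=d_{H_i}(v_{r_0},v_{r_t})$. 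Concatenating shortest $H_i$-paths between consecutive vertices then produces a walk of length $d_{H_i}(v_{r_0},v_{r_t})$, hence an $H_i$-geodesic $g'$ passing through every vertex of $A_i\cap V(g)$, with $\lambda_{H_i}(g')=r_t-r_0\leq\lambda_G(g)\leq d$ and $|A_i\cap V(g')|\geq k$, contradicting that $A_i$ is a $k$-general $d$-position set in $H_i$. This telescoping/concatenation step, which lets isometry stand in for convexity, is the part I expect to require the most care.
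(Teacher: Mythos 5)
You follow the same strategy as the paper: fix maximum $k$-general $d$-position sets $S_i\subseteq V(H_i)$, take $S=\bigcup_{i}S_i$, and show by contradiction that $S$ is $k$-general $d$-position in $G$. Your telescoping/concatenation step is a careful justification of what the paper merely asserts, namely that isometry of $H_{i_0}$ yields a geodesic $g'$ of $H_{i_0}$ of length at most $\lambda_G(g)$ containing all the vertices of $S\cap V(g)$ (the paper even claims $V(g')=V(g)\cap V(H_{i_0})$, an equality that need not hold and is not needed; your version, which only claims containment of the relevant vertices, is the correct statement). For $k\geq 3$ your proof is complete and correct.

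The ``delicate point'' you flag at $k=2$ is, however, more than delicate: it exposes a genuine gap in the lemma as stated, and the paper's proof silently commits exactly the error you avoid. The paper asserts that $d_G(H_i,H_j)\geq d$ for $i\neq j$ forces $S\cap V(g)\subseteq V(H_{i_0})$ for a single index $i_0$; as your two-block analysis shows, this is only justified when a third, interior vertex of $g$ lies in $S$, i.e.\ when $k\geq 3$. For $k=2$ the conclusion of the lemma actually fails: take $G=P_{d+1}$ with vertex set $\{1,\ldots,d+1\}$, and let $H_1$ and $H_2$ be the one-vertex (hence isometric) subgraphs induced by $1$ and $d+1$. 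Then $d_G(H_1,H_2)=d$ and $\gp^2_d(H_1)+\gp^2_d(H_2)=2$, while $\gp^2_d(P_{d+1})=1$, since any two vertices of $P_{d+1}$ lie on a geodesic of length at most $d$ (this value also follows from Theorem \ref{theorem_paths}). So the lemma should either be restricted to $k\geq 3$, or its hypothesis strengthened to $d_G(H_i,H_j)\geq d+1$; under the strengthened hypothesis your two-block case yields an immediate contradiction, and your argument then covers $k=2$ as well.
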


\begin{proof}
For each $i\in\{1,\ldots,\ell\}$, let $S_i\subseteq V(H_i)$ be a $k$-general $d$-position set in $H_i$ with $|S_i|=\gp^k_d(H_i)$. Let $S=\bigcup_{i=1}^\ell S_i$. Since the vertex sets of the subgraphs in $\{H_1,\ldots,H_\ell\}$ are pairwise disjoint, we have 
\[|S|=\sum_{i=1}^\ell\gp^k_d(H_i).\]
Let us show that $S$ is $k$-general $d$-position in $G$. Suppose not, then there is some geodesic $g$ of $G$ such that $|S\cap V(g)|\geq k$ and $\lambda_G(g)\leq d$. Since $d_G(H_i,H_j)\geq d$ for $i\neq j$, it follows that there is an $i_0\in\{1,\ldots,\ell\}$ such that $S\cap V(g)\subseteq V(H_{i_0})$, and hence $S\cap V(g)=S_{i_0}$. But, since $H_{i_0}$ is an isometric subgraph of $G$, it follows that there is a geodesic $g'$ of $H_{i_0}$ such that $V(g')=V(g)\cap V(H_{i_0})$ and $\lambda_{H_{i_0}}(g')\leq\lambda_G(g)$. Thus $|S_{i_0}\cap V(g')|\geq k$ and $\lambda_{H_{i_0}}(g')\leq d$, which contradicts the fact that $S_{i_0}$ is a $k$-general $d$-position set in $H_i$.
\end{proof}

\section{Paths}\label{section_paths}

In the current section we will generalize the following result of Klav\v{z}ar, Rall and Yero to $k$-general $d$-position sets.

\begin{proposition}[{Klav\v{z}ar, Rall and Yero, \cite[Proposition 3.1]{MR4341189}}]\label{proposition_KRY_paths}
If $n\geq 3$ and $2\leq d\leq n-1$, then
\[\gp_d(P_n)=\begin{cases}
    2\ceil{\frac{n}{d+1}}-1 &\text{if $n \equiv 1 \bmod (d+1)$}\\[0.5em]
    2\ceil{\frac{n}{d+1}} &\text{otherwise.}
\end{cases}\]
\end{proposition}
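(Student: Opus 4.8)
The plan is to recast the problem as a one-dimensional packing question and then match an upper bound coming from the isometric cover lemma with an explicit construction. Identify $V(P_n)$ with $\{1,\dots,n\}$ in path order. Since $P_n$ is a tree, its geodesics are exactly its subpaths, and three vertices $s_1<s_2<s_3$ lie on a common geodesic whose length is $s_3-s_1$. Hence a set $S\subseteq\{1,\dots,n\}$ is a general $d$-position set (that is, a $3$-general $d$-position set) if and only if every triple $a<b<c$ drawn from $S$ satisfies $c-a>d$. Writing $S=\{s_1<s_2<\cdots<s_m\}$, the span $s_\ell-s_j$ of a triple $s_j<s_k<s_\ell$ is minimized over consecutive triples, so the only binding constraints are those, and the condition collapses to
\[ s_{i+2}-s_i\geq d+1 \qquad \text{for all } 1\le i\le m-2. \]
Once this reformulation is fixed, the proposition is a question of packing as many integers as possible into $[1,n]$ under this gap condition.

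For the upper bound I would apply Lemma \ref{lemma_isometric_upper_bound}. Partition $\{1,\dots,n\}$ into $\floor{n/(d+1)}$ consecutive blocks of $d+1$ vertices together with a final block of $r=n\bmod(d+1)$ vertices; each block is a convex, hence isometric, subpath of $P_n$. Since $P_{d+1}$ has diameter $d$, any three of its vertices already lie on a geodesic of length at most $d$, so $\gp_d(P_{d+1})=2$ and each full block contributes at most $2$. The remainder block contributes $\gp_d(P_r)$, which is $0$, $1$, or $2$ according to whether $r=0$, $r=1$, or $r\geq 2$. Summing gives $2\floor{n/(d+1)}+\gp_d(P_r)$, and rewriting $\floor{n/(d+1)}$ in terms of $\ceil{n/(d+1)}$ reproduces both cases of the formula exactly, with the $-1$ appearing precisely when $r=1$, i.e. $n\equiv 1\bmod(d+1)$.

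For the matching lower bound I would exhibit an explicit set. Set $q=\floor{n/(d+1)}$, take the two leading vertices $j(d+1)+1$ and $j(d+1)+2$ from each full block $0\le j\le q-1$, and then adjoin $\min(r,2)$ of the remaining vertices $q(d+1)+1,\,q(d+1)+2$. Checking the gap condition $s_{i+2}-s_i\geq d+1$ on consecutive triples is immediate: within-block pairs have gap $1$, and every triple straddling a block boundary spans exactly $d+1$. Thus the set is in general $d$-position, and its cardinality $2q+\min(r,2)$ (with the $r=1$ case contributing $1$) meets the upper bound in every residue class.

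The only place real care is needed is the bookkeeping around the final block and the resulting $-1$ correction: one must confirm that $\gp_d(P_r)$ genuinely drops to $1$ when $r=1$—a single leftover vertex cannot be paired with the last block without creating a short triple—yet returns to $2$ as soon as $r\geq 2$, and that the construction saturates this value in each case. Everything else is routine once the combinatorial reformulation and the value $\gp_d(P_{d+1})=2$ are in hand.
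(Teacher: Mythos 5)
Your proof is correct and follows essentially the same route the paper takes for its generalization (Theorem \ref{theorem_paths}, of which this proposition is the case $k=3$): the upper bound via Lemma \ref{lemma_isometric_upper_bound} applied to consecutive blocks of $d+1$ vertices plus a remainder block (with $\gp_d(P_{d+1})=2$ playing the role of Lemma \ref{lemma_paths_short}), and the lower bound via the explicit set taking the first two vertices of each block, which is exactly the paper's $S_{k,d}$ specialized to $k=3$. The only cosmetic difference is your preliminary reformulation in terms of consecutive-triple gaps $s_{i+2}-s_i\geq d+1$, which the paper leaves implicit.
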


Let us first establish a formula for $\gp^k_d(P_n)$ under the assumption that $n\leq d+1$. Let us note that we will take the vertices of $P_n$ to be $V(P_n)=\{1,\ldots,n\}$ with adjacencies given by $\{i,i+1\}\in E(P_n)$ for $i\in\{1,\ldots,n-1\}$.

\begin{lemma}\label{lemma_paths_short}
    Suppose $d,k$ and $n$ are integers such that $d\geq 1$ and $2\leq k\leq n \leq d+1$. Then
    \[\gp^k_d(P_n)=\gp^k(P_n)=\min(n,k-1).\]
\end{lemma}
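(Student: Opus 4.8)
The plan is to split the two equalities and reduce the whole statement to a single cardinality count coming from the geodesic structure of a path. For the left equality, note that $n\leq d+1$ is exactly the statement $d\geq n-1=\diam(P_n)$, so the remark preceding the lemma (that $\gp^k_d(G)=\gp^k_{\diam(G)}(G)$ whenever $d\geq\diam(G)$, and that $\gp^k(G)$ denotes this common value) gives $\gp^k_d(P_n)=\gp^k(P_n)$ with no further work. It then remains to show $\gp^k(P_n)=\min(n,k-1)$. Here I would first dispose of the minimum: the hypothesis $k\leq n$ forces $k-1\leq n-1<n$, so $\min(n,k-1)=k-1$, and the target value is simply $k-1$.

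For the upper bound, the key structural observation is that every geodesic of $P_n$ is a contiguous block $\{i,i+1,\ldots,j\}$ of vertices, and in particular the whole vertex set $V(P_n)=\{1,\ldots,n\}$ is itself a geodesic, namely the unique shortest path between the endpoints $1$ and $n$. Since $d\geq\diam(P_n)$, any $k$-general $d$-position set $S$ satisfies the diameter-independent condition (\ref{equation_gkp}), i.e.\ $|S\cap V(g)|<k$ for every $g\in\G(P_n)$. Applying this to the geodesic $g$ whose vertex set is all of $V(P_n)$ yields $|S|=|S\cap V(P_n)|<k$, hence $|S|\leq k-1$, and therefore $\gp^k(P_n)\leq k-1$.

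For the matching lower bound, any set $S\subseteq V(P_n)$ with $|S|=k-1$ automatically satisfies $|S\cap V(g)|\leq|S|=k-1<k$ for every geodesic $g$, so $S$ is a $k$-general position set; such a set exists because $P_n$ has $n\geq k>k-1$ vertices. Combining the two bounds gives $\gp^k(P_n)=k-1=\min(n,k-1)$, as claimed. I expect no genuine obstacle in this argument: the entire content is the observation that the longest geodesic of a path is the path itself, which collapses the $k$-general position condition into the plain inequality $|S|<k$. The only points requiring care are correctly invoking the diameter reduction to justify the first equality and checking that the hypothesis $k\leq n$ makes the minimum equal to $k-1$.
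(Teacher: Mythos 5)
Your proposal is correct and follows essentially the same route as the paper: both reduce $\gp^k_d(P_n)$ to $\gp^k(P_n)$ via the observation that $n\leq d+1$ means no geodesic of $P_n$ has length exceeding $d$, then obtain the upper bound $k-1$ from the fact that the entire path is a single geodesic, and the lower bound by exhibiting any $(k-1)$-element set. Your explicit note that the hypothesis $k\leq n$ collapses $\min(n,k-1)$ to $k-1$ is a harmless clarification of what the paper leaves implicit.
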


\begin{proof}
Since $n\leq d+1$, it follows that there are no geodesics in $P_n$ with length greater than $d$. Thus, $\gp^k_d(P_n)=\gp^k(P_n)$. It easily follows that 
\[A=\{1,\ldots,\min(n,k-1)\}\] is a $k$-general position set in $P_n$, and any set of vertices of $P_n$ with cardinality greater than $\min(n,k-1)$ is not a $k$-general position set in $P_n$. Thus, $\gp^k_d(P_n)=\gp^k(P_n)=\min(n,k-1)$.
\end{proof}

Now we will prove the general version of the formula for $\gp^k_d(P_n)$.

\begin{theorem}\label{theorem_paths}
    Suppose $d,k$ and $n$ are integers such that $d\geq 1$ and $2\leq k \leq n$. 
    Then
        \[\gp^k_d(P_n)=\begin{cases}
        n & \text{if $1\leq d\leq k-2$}\\
        (k-1)\floor{\frac{n}{d+1}}+\min(n\bmod(d+1),k-1) & \text{if $k-1\leq d$.}
    \end{cases}.\]
\end{theorem}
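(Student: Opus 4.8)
The plan is to reduce the defining condition to a statement about windows of consecutive vertices. In $P_n$ the geodesics are exactly the intervals $\{i,i+1,\ldots,j\}$, of length $j-i$, so a geodesic has length at most $d$ precisely when it consists of at most $d+1$ consecutive vertices. Thus $S\subseteq\{1,\ldots,n\}$ is in $k$-general $d$-position if and only if every set of $d+1$ consecutive vertices meets $S$ in at most $k-1$ points, the windows of size exactly $d+1$ being the binding constraints. The case $1\leq d\leq k-2$ is then immediate: here $d+1\leq k-1$, so no geodesic of length at most $d$ can even contain $k$ vertices, whence $\gp^k_d(P_n)=n$. (This is Lemma \ref{lemma_all_vertices} applied to $P_n$, which has diameter $n-1$.) So I assume $k-1\leq d$ for the remainder.

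For the upper bound I would write $n=q(d+1)+r$ with $q=\floor{\frac{n}{d+1}}$ and $r=n\bmod(d+1)$, and partition $\{1,\ldots,n\}$ into the $q$ consecutive blocks of size $d+1$ followed by a final block of size $r$. Each block induces a subpath, and a subpath of a path is a convex, hence isometric, subgraph; together these cover $V(P_n)$. Lemma \ref{lemma_isometric_upper_bound} then gives $\gp^k_d(P_n)\leq q\,\gp^k_d(P_{d+1})+\gp^k_d(P_r)$. By Lemma \ref{lemma_paths_short} and the assumption $k\leq d+1$ we have $\gp^k_d(P_{d+1})=\min(d+1,k-1)=k-1$, and $\gp^k_d(P_r)=\min(r,k-1)$ (directly when $r<k$, since then $P_r$ cannot contain $k$ vertices at all, and from Lemma \ref{lemma_paths_short} when $k\leq r\leq d+1$). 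This produces exactly the claimed value $(k-1)\floor{\frac{n}{d+1}}+\min(n\bmod(d+1),k-1)$.

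For the matching lower bound I would exhibit an optimal set rather than appeal to Lemma \ref{lemma_isometric_lower_bound}: keeping clusters of $k-1$ vertices pairwise at distance at least $d$ would force them farther apart than the optimal period $d+1$, so that symmetric lower bound is too lossy once $k>2$. Instead I take the $(d+1)$-periodic pattern that places $k-1$ vertices at the start of each block, namely $S=\{\,j(d+1)+i:0\leq j,\ 1\leq i\leq k-1\,\}\cap\{1,\ldots,n\}$, but using only the first $\min(r,k-1)$ positions of the final block. Counting gives $|S|=(k-1)q+\min(r,k-1)$, so it remains only to check that $S$ is in $k$-general $d$-position. The key point---and the step needing the most care---is the window condition near the junction of the last full block and the partial block: I would argue that $S$ is exactly the restriction to $\{1,\ldots,n\}$ of the genuinely $(d+1)$-periodic infinite pattern (this holds both when $r\geq k-1$ and when $r<k-1$), so that every window of at most $d+1$ consecutive vertices inherits a count of at most $k-1$ points. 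Finally I would note the degenerate case $n<d+1$ (so $q=0$), in which the statement reduces directly to Lemma \ref{lemma_paths_short}.
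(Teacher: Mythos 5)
Your proposal is correct and matches the paper's proof essentially step for step: the lower bound uses the same $(d+1)$-periodic set $S_{k,d}=\bigcup_{i\geq 0}[(d+1)i+1,(d+1)i+k-1]$ restricted to $\{1,\ldots,n\}$, and the upper bound uses the same partition into blocks of size $d+1$ plus a remainder block, combined with Lemma \ref{lemma_isometric_upper_bound} and Lemma \ref{lemma_paths_short}. Your ``window of $d+1$ consecutive vertices'' reformulation is only a cosmetic repackaging of the paper's argument about components of the induced subgraph, and if anything you treat the small remainder case $r<k$ slightly more carefully than the paper does.
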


\begin{proof}
Fix $d,k$ and $n$ as in the hypothesis. The first case of the formula follows easily from Lemma \ref{lemma_all_vertices}.

Suppose $k-1\leq d$. We define a particular subset $S_{k,d}$ of $\Z_{\geq 1}$ that will be used to define a $k$-general $d$-position subset of $P_n$. Let
\[S_{k,d}=\bigcup_{i=0}^\infty[(d+1)i+1,(d+1)i+k-1].\]

Define $q=\floor{\frac{n}{d+1}}$ and let $r=n\bmod(d+1)$; in other words, let $r$ be the unique integer such that $n=q(d+1)+r$ and $0\leq r<d+1$. Let $m=\min(n\bmod(d+1),k-1)$. It is easy to verify that
\[|S_{k,d}\cap [1,n]|=(k-1)q+m.\]
The induced subgraph $P_n[S_{k,d}\cap [1,n]]$ has either $q$ or $q+1$ connected components, each of size at most $k-1$. Furthermore, the distance between the least elements of two distinct connected components of $P_n[S_{k,d}\cap [1,n]]$ is at least $d+1$. Thus, if a geodesic $g\in\G(P_n)$ contains $k$ or more vertices of $S_{k,d}\cap [1,n]$, then $\lambda_{P_n}(g)>d$. This implies that $S_{k,d}\cap [1,n]$ is a $k$-general $d$-position set in $P_n$ and hence $\gp^k_d(P_n)\geq (k-1)q+m$.

Let us show that $\gp^k_d(P_n)\leq (k-1)q+m$. 

Suppose $n<d+1$. Then $q=0$ and $n\bmod(d+1)=n$. Thus, it follows by Lemma \ref{lemma_paths_short}, that 
\[\gp^k_d(P_n)=\min(n,k-1)=(k-1)q+m.\] 

Suppose $n=d+1$, then $q=1$, $r=n\bmod (d+1)=m=0$, and it follows by Lemma \ref{lemma_paths_short}, that 
\[\gp^k_d(P_n)=\gp^k_d(P_{d+1})=\min(d+1,k-1)=k-1=(k-1)q+m.\]

Now suppose that $n>d+1$. Then $q\geq 1$. We define a sequence $H_0,\ldots,H_q$ of subgraphs of $P_n$ as follows. For $i\in\{0,\ldots,q-1\}$, let \[X_i=[(d+1)i+1,(d+1)(i+1)].\] If $r>0$ let $X_q=[(d+1)q+1,n]$ and otherwise, let $X_q=\emptyset$. For $i\in\{0,\ldots,q\}$ let $H_i=P_n[X_i]$ and note that $H_i$ is an isometric subgraph of $P_n$. Furthermore, $V(P_n)=\bigcup_{i=0}^qV(H_i)$. Hence by applying Lemma \ref{lemma_isometric_upper_bound}, and using that $H_i\cong P_{d+1}$ for $i\in\{0,\ldots,q-1\}$ and $H_q\cong P_r$ where $r<d+1$, it follows from Lemma \ref{lemma_paths_short} that 
\[\gp^k_d(P_n)\leq\sum_{i=0}^q\gp^k_d(H_i)=\gp^k_d(P_{d+1})q+\gp^k_d(P_r)=(k-1)q+m.\]
\end{proof}

\begin{example}
The values of $\gp^k_d(P_{14})$ are indicated in the table below. Note that the values in the $k=3$ column match those of $\gp_d(P_{14})$, which appear in \cite[Table 1]{MR4341189}. Thus, we see that the values of $\gp_d$, for $P_{14}$ and more broadly for any graph, are a one dimensional slice of a multidimensional phenomenon.

\begin{center}
\begin{tabular}{>{\centering\arraybackslash}p{5em}|cccccccccccccc}
 & \multicolumn{14}{c@{}}{values of $k$}\\
values of $d$ & 2 & 3 & 4 & 5 & 6 & 7 & 8 & 9 & 10 & 11 & 12 & 13 & 14 & 15\\
\hline
1 & 7 & 14 & 14 & 14 & 14 & 14 & 14 & 14 & 14 & 14 & 14 & 14 & 14 & 14\\
2 & 5 & 10 & 14 & 14 & 14 & 14 & 14 & 14 & 14 & 14 & 14 & 14 & 14 & 14\\
3 & 4 & 8 & 11 & 14 & 14 & 14 & 14 & 14 & 14 & 14 & 14 & 14 & 14 & 14\\
4 & 3 & 6 & 9 & 12 & 14 & 14 & 14 & 14 & 14 & 14 & 14 & 14 & 14 & 14\\
5 & 3 & 6 & 8 & 10 & 12 & 14 & 14 & 14 & 14 & 14 & 14 & 14 & 14 & 14\\
6 & 2 & 4 & 6 & 8 & 10 & 12 & 14 & 14 & 14 & 14 & 14 & 14 & 14 & 14\\
7 & 2 & 4 & 6 & 8 & 10 & 12 & 13 & 14 & 14 & 14 & 14 & 14 & 14 & 14\\
8 & 2 & 4 & 6 & 8 & 10 & 11 & 12 & 13 & 14 & 14 & 14 & 14 & 14 & 14\\
9 & 2 & 4 & 6 & 8 & 9 & 10 & 11 & 12 & 13 & 14 & 14 & 14 & 14 & 14\\
10 & 2 & 4 & 6 & 7 & 8 & 9 & 10 & 11 & 12 & 13 & 14 & 14 & 14 & 14\\
11 & 2 & 4 & 5 & 6 & 7 & 8 & 9 & 10 & 11 & 12 & 13 & 14 & 14 & 14\\
12 & 2 & 3 & 4 & 5 & 6 & 7 & 8 & 9 & 10 & 11 & 12 & 13 & 14 & 14\\
13 & 1 & 2 & 3 & 4 & 5 & 6 & 7 & 8 & 9 & 10 & 11 & 12 & 13 & 14\\
\end{tabular}
\captionof{table}{The quantities $\gp^k_d(P_{14})$ for various values of $k$ and $d$.}\label{sophisticatedtable}
\end{center}

\end{example}

\section{Cycles}\label{section_cycles}

\subsection{Preliminaries on maximal evenness}

Before stating our formula for $\gp^k_d(C_n)$, we discuss some preliminaries involving maximally even sets, introduced by Clough and Douthett \cite{CloughDouthett}. See \cite{BCL} for more background.

Let us note that we take the vertices of $C_n$ as being $V(C_n)=\{0,\ldots,n-1\}$ and edges $\{i,(i+1)\bmod n\}$ for each $i\in\{0,\ldots,n-1\}$. Fix vertices $u$ and $v$ in $C_n$. The \emph{clockwise distance} from $u$ to $v$ is the least nonnegative integer congruent to $(v-u)$ mod $n$, that is
\[d(u,v)=(v-u)\bmod n.\]
Recall that the \emph{distance} (or \emph{geodesic distance}) between $u$ and $v$ is the length of a shortest path from $u$ to $v$. For any set $A=\{a_0,\ldots,a_{m-1}\}$ of vertices in $C_n$ with $|A|=m\leq n$, arranged in increasing order, we define the \emph{$A$-span of $(a_i,a_k)$} to be
\[\spn_A(a_i,a_j)=(j-i)\bmod m.\]
We define two multisets: the \emph{$k$-multispectrum of clockwise distances} of $A$ is the multiset
\[\sigma^*_k(A)=\left[\,d^*(u,v)\st \spn_A(u,v)=k\,\right]\]
and the \emph{$k$-multispectrum of geodesic distances} of $A$ is the multiset
\[\sigma_k(A)=\left[\,d(u,v)\st \spn_A(u,v)=k\,\right].\]
For a multiset $X$ we define $\supp(X)$ to be the set whose elements are those of $X$.

\begin{definition}[{Clough and Douthett \cite{CloughDouthett}}]\label{definition_me} A set $A$ of vertices in $C_n$ with $|A|=m$ is \emph{maximally even} if for each integer $k$ with $1\leq k \leq m-1$ the set $\supp(\sigma^*(A))$ consists of either a single integer or two consecutive integers.
\end{definition}

\begin{definition}[{Clough and Douthett \cite{CloughDouthett}}]\label{definition_J}
Suppose $m$, $n$ and $r$ are integers with $1\leq m\leq n$ and $0\leq r\leq n-1$. We define
\[J^r_{n,m}=\left\{\floor{\frac{ni+r}{m}}\st i\in \Z \text{ and } 0\leq i < m\right\}.\]
Sets of the form $J^r_{n,m}$ are called \emph{$J$-representations}.
\end{definition}

\begin{lemma}[{Clough and Douthett \cite[Corollary 1.2]{CloughDouthett}}]\label{lemma_spec_of_J_rep_clockwise}
Suppose $k$, $m$, $n$ and $r$ are integers with $1\leq m\leq n$ and $0\leq r\leq n-1$. If $1\leq k\leq m-1$ then
\[\supp(\sigma^*_k(J^r_{n,m}))=\left\{\floor{\frac{nk}{m}},\ceil{\frac{nk}{m}}\right\}.\]
Hence $J^r_{n,m}$ is a maximally even subset of $C_n$.
\end{lemma}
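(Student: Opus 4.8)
The plan is to compute the multiset $\sigma^*_k(J^r_{n,m})$ explicitly and read off its support. First I would set $a_i=\floor{\frac{ni+r}{m}}$ for $0\leq i<m$. Since $\floor{x+y}-\floor{x}\geq\floor{y}$ and $n\geq m$, consecutive differences satisfy $a_{i+1}-a_i\geq\floor{\frac{n}{m}}\geq 1$, so the $a_i$ are strictly increasing; moreover $a_0\geq 0$ and $a_{m-1}\leq\floor{\frac{nm-1}{m}}=n-1$, so $J^r_{n,m}=\{a_0<\cdots<a_{m-1}\}$ genuinely has $m$ vertices, all lying in $\{0,\ldots,n-1\}$. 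By the definition of the $A$-span, the ordered pairs $(u,v)$ with $\spn_A(u,v)=k$ are exactly the pairs $(a_i,a_{(i+k)\bmod m})$ for $0\leq i<m$, so $\sigma^*_k(J^r_{n,m})$ is the multiset of the $m$ clockwise distances $d^*(a_i,a_{(i+k)\bmod m})$.

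The key computation is to show that each such distance equals $\floor{\frac{n(i+k)+r}{m}}-\floor{\frac{ni+r}{m}}$. When $i+k<m$ this is immediate because $a_{i+k}>a_i$. When $i+k\geq m$ the clockwise target index is $j=i+k-m<i$, so $a_j<a_i$ and $d^*(a_i,a_j)=a_j-a_i+n$; but $\floor{\frac{n(i+k-m)+r}{m}}=\floor{\frac{n(i+k)+r}{m}}-n$, and the two copies of $n$ cancel to give the same expression. Writing the Euclidean divisions $nk=mq+s$ with $q=\floor{\frac{nk}{m}}$ and $0\leq s<m$, and $ni+r=mb_i+c_i$ with $0\leq c_i<m$ (so $a_i=b_i$), I would expand $n(i+k)+r=m(b_i+q)+(c_i+s)$ and use $0\leq c_i+s<2m$ to conclude
\[d^*(a_i,a_{(i+k)\bmod m})=q+\floor{\tfrac{c_i+s}{m}}\in\{q,\,q+1\}.\]

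The main obstacle is upgrading the statement ``each distance lies in $\{q,q+1\}$'' to ``the support is exactly $\left\{\floor{\frac{nk}{m}},\ceil{\frac{nk}{m}}\right\}$'', i.e.\ showing both extreme values are actually attained when $s>0$. Rather than analyzing the residues $c_i$ directly (which would involve $\gcd(n,m)$), I would use an averaging argument: the $m$ consecutive gaps $d^*(a_i,a_{(i+1)\bmod m})$ sum to $n$ since they trace the cycle once, and each span-$k$ arc is a sum of $k$ consecutive gaps, so summing over $i$ counts every gap exactly $k$ times and gives $\sum_{i=0}^{m-1}d^*(a_i,a_{(i+k)\bmod m})=kn=mq+s$. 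If $s=0$ then $m\mid nk$, every term equals $q=\floor{\frac{nk}{m}}=\ceil{\frac{nk}{m}}$, and the support is the singleton $\{q\}$. If $s>0$ then $mq<mq+s<m(q+1)$, so the terms cannot all equal $q$ (the sum would be $mq$) nor all equal $q+1$ (the sum would be $m(q+1)$); since each term is $q$ or $q+1$, both values must occur and $\supp(\sigma^*_k(J^r_{n,m}))=\{q,q+1\}=\left\{\floor{\frac{nk}{m}},\ceil{\frac{nk}{m}}\right\}$. In every case the support is a single integer or two consecutive integers for each $k$ with $1\leq k\leq m-1$, which is exactly Definition \ref{definition_me}, so $J^r_{n,m}$ is maximally even.
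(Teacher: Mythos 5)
Your argument is correct, and it is worth noting that the paper offers no proof of this lemma at all --- it is imported from Clough and Douthett as a black box --- so what you have written is an independent, self-contained verification rather than a variant of a proof in the paper. The two pillars of your argument are sound: the identity $d^*(a_i,a_{(i+k)\bmod m})=\floor{\frac{n(i+k)+r}{m}}-\floor{\frac{ni+r}{m}}$ is established correctly in both the non-wrapping and wrapping cases (the shift by $n$ in the index and the shift by $n$ in the clockwise distance cancel exactly), and the reduction $d^*=q+\floor{\frac{c_i+s}{m}}\in\{q,q+1\}$ together with the sum argument correctly decides which values occur, including the degenerate case $s=0$ where floor and ceiling coincide and the support is a singleton. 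One step deserves tightening: in the averaging argument you assert that each span-$k$ clockwise distance equals the sum of its $k$ constituent consecutive gaps. That additivity is true but not automatic --- it requires that no wraparound occurs, which follows from the easy observation that any $k\leq m-1$ of the $m$ gaps (each at least $1$, with total $n$) sum to at most $n-(m-k)<n$. You can sidestep this entirely in one of two ways: either cite the paper's own Lemma \ref{lemma_sum_is_kn} (Clough--Myerson), which is verbatim the statement $\sum\sigma^*_k(A)=kn$ that you are re-deriving, or observe that your Step-3 formula already gives the sum by telescoping, since $\sum_{i=0}^{m-1}\bigl(\floor{\frac{n(i+k)+r}{m}}-\floor{\frac{ni+r}{m}}\bigr)=\sum_{j=m}^{m+k-1}\floor{\frac{nj+r}{m}}-\sum_{j=0}^{k-1}\floor{\frac{nj+r}{m}}=kn$, using $\floor{\frac{n(j+m)+r}{m}}=\floor{\frac{nj+r}{m}}+n$. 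With that patch, the proof is complete and, pleasantly, uses nothing beyond floor-function arithmetic.
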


Indeed, Clough and Douthett \cite{CloughDouthett} showed that a set of vertices $A$ of $C_n$ is maximally even if and only if it is of the form $J^r_{n,m}$ for some appropriate values of $m$, $n$ and $r$.

Prior to Clough and Douthett's work on maximal evenness, the following easy lemma was established by Clough and Myerson \cite[Lemma 2]{CloughAndMyerson}.

\begin{lemma}\label{lemma_sum_is_kn}
Suppose $n$ is a positive integer, $A$ is a set of vertices in $C_n$ with $|A|=m$ and $1\leq k\leq m-1$. Then
\[\sum \sigma^*_k(A)=kn.\]
\end{lemma}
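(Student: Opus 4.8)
The plan is to unwind the definition of $\sigma^*_k(A)$ into a sum over the cyclically consecutive ``gaps'' of $A$ and then count how many times each gap contributes. Write $A=\{a_0,\ldots,a_{m-1}\}$ in increasing order. For each index $i$ there is exactly one index $j$ with $\spn_A(a_i,a_j)=k$, namely $j=(i+k)\bmod m$, so $\sigma^*_k(A)$ is precisely the multiset of the $m$ values $d^*(a_i,a_{(i+k)\bmod m})$ for $i=0,\ldots,m-1$. Thus the quantity to be computed is $\sum_{i=0}^{m-1} d^*(a_i,a_{(i+k)\bmod m})$.

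First I would introduce the consecutive clockwise gaps $g_i=d^*(a_i,a_{(i+1)\bmod m})=(a_{(i+1)\bmod m}-a_i)\bmod n$ for $i=0,\ldots,m-1$. Traversing the cycle once clockwise through all $m$ points of $A$ shows $\sum_{i=0}^{m-1}g_i=n$. The heart of the argument is the identity
\[d^*(a_i,a_{(i+k)\bmod m})=\sum_{l=0}^{k-1}g_{(i+l)\bmod m},\]
that is, the clockwise distance across $k$ spans is the sum of the $k$ intervening gaps. Summing this identity over all $i$ and re-indexing, I would observe that for each fixed $j$ the gap $g_j$ arises exactly $k$ times, once for each offset $l\in\{0,\ldots,k-1\}$ (the corresponding $i$ being $(j-l)\bmod m$), so
\[\sum\sigma^*_k(A)=\sum_{i=0}^{m-1}\sum_{l=0}^{k-1}g_{(i+l)\bmod m}=k\sum_{j=0}^{m-1}g_j=kn,\]
as claimed.

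The only point requiring care---and the sole potential obstacle---is justifying the gap-sum identity, since $d^*$ is defined modulo $n$. The sum of $k$ consecutive gaps is a nonnegative integer, and because $k\le m-1$ the $m-k\ge 1$ omitted gaps are each strictly positive (the $a_i$ being distinct), so this sum is strictly less than $n=\sum_j g_j$. Being a representative of $(a_{(i+k)\bmod m}-a_i)$ lying in $[0,n)$, it must equal $d^*(a_i,a_{(i+k)\bmod m})$, with no extra wraparound around the cycle. Once this is established, the remainder is the routine double-counting displayed above.
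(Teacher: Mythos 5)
Your proof is correct. The paper itself gives no argument for this lemma---it is quoted from Clough and Myerson's work, where it is proved by exactly the double-counting you describe: decompose each span-$k$ clockwise distance into the $k$ consecutive gaps it covers, note that the gaps sum to $n$, and observe that each gap is covered by exactly $k$ of the $m$ span-$k$ pairs. Your treatment of the one subtle point---that the sum of $k$ consecutive gaps lies in $[0,n)$ because the $m-k\geq 1$ omitted gaps are strictly positive, so no wraparound correction is needed---is precisely the care the hypothesis $k\leq m-1$ is there to enable, and your write-up is complete.
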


\subsection{A new result on cycles}

Before stating our formula for the $k$-general $d$-position number of cycles, let us review a proposition from \cite{MR4341189}.

\begin{remark}\label{remark_mistake}
It is stated in \cite[Proposition 3.3]{MR4341189} that if $n\geq 5$ and $2\leq d<\floor{\frac{n}{2}}$, then
\begin{align}\gp_d(C_n)=\begin{cases}
    \displaystyle 2\floor{\frac{n}{d+1}}+1 &\text{if $n\equiv d \bmod (d+1)$}\\[1em]
    \displaystyle 2\floor{\frac{n}{d+1}} &\text{otherwise.}
\end{cases}\label{equation_mistake}
\end{align}
Furtermore, if  $d\geq\floor{\frac{n}{2}}$ then $\gp_d(C_n)=3$.

Let us point out a minor problem with (\ref{equation_mistake}). Take $n=16$ and $d=5$. Then, according to Proposition (\ref{equation_mistake}), we have $\gp_5(C_{16})=2\floor{\frac{16}{6}}=4$. However, the set 
\[A=J^0_{16,5}=\left\{\floor{\frac{16i}{5}}\st 0\leq i<5\right\}=\{0,3,6,9,12\}\]
is a general $5$-position set in $C_{16}$ with cardinality $5$, thus contradicting (\ref{equation_mistake}). Let us note that (\ref{equation_mistake}) can easily be corrected by moving the $2$ inside the floor function in the second case (this is a consequence of Proposition \ref{theorem_formula_for_C_n}).
\end{remark}

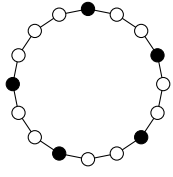
\begin{figure}
\centering
\begin{tikzpicture}[scale=0.5]

  \foreach \i in {0,3,6,9,12} {
    \coordinate (v\i) at ({90-(360/16)*\i}:2);
    \node[draw, circle, fill=black, text=white, inner sep=0pt, minimum width = 5pt] at (v\i) {};
  }
  
  \foreach \i in {1,2,4,5,7,8,10,11,13,14,15} {
    \coordinate (v\i) at ({90-(360/16)*\i}:2);
    \node[draw, circle, fill=white, inner sep=0pt, minimum width = 5pt] at (v\i) {};
  }
  
  \begin{pgfonlayer}{background}
    \foreach \i in {0,...,14} {
      \draw (v\i) -- (v\the\numexpr\i+1\relax);
    }
    \draw (v15) -- (v0); 
  \end{pgfonlayer}
\end{tikzpicture}

\caption{The maximally even set $J^0_{16,5}$.}
\label{figure_me_set}
\end{figure}

Generalizing \cite[Proposition 3.3]{MR4341189}, we obtain the following.

\begin{theorem}\label{theorem_formula_for_C_n}
    Suppose $d,k$ and $n$ are positive integers such that $1\leq d\leq \floor{\frac{n}{2}}$ and $2\leq k\leq \floor{\frac{n}{2}}+1$. Then
    \[\gp^k_d(C_n)=\begin{cases}
        \displaystyle n &\text{if $1\leq d\leq k-2$}.\\[0.5em]
        \displaystyle \floor{\frac{(k-1)n}{d+1}} &\text{if $k-1\leq d\leq\floor{\frac{n}{2}}$}.\\
    \end{cases}\]
\end{theorem}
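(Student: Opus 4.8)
The first case is immediate: when $1\le d\le k-2$, Lemma \ref{lemma_all_vertices} (applied with $\diam(C_n)=\floor{n/2}$, so that the standing hypothesis $2\le k\le\diam(C_n)+1$ supplies $2\le k\le\diam(C_n)+1$ and the constraint $1\le d\le k-2$ matches) gives that $V(C_n)$ is itself a $k$-general $d$-position set, whence $\gp^k_d(C_n)=n$. So assume from now on that $k-1\le d\le\floor{n/2}$.

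The engine of the proof is a reduction of the defining condition to a statement about the multispectrum $\sigma^*_{k-1}$. The plan is first to record the following characterization: a set $A=\{a_0,\dots,a_{m-1}\}\subseteq V(C_n)$ of size $m\ge k$ is in $k$-general $d$-position if and only if $\min\sigma^*_{k-1}(A)\ge d+1$. The key point, which uses the standing hypothesis $d\le\floor{n/2}$, is that every geodesic of $C_n$ of length at most $d$ is a clockwise arc, and such an arc contains at least $k$ vertices of $A$ precisely when some run of $k$ cyclically consecutive vertices $a_i,a_{i+1},\dots,a_{i+k-1}$ of $A$ fits inside it, i.e. when $d^*(a_i,a_{i+k-1})\le d$. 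Since $\spn_A(a_i,a_{i+k-1})=k-1$, the clockwise distances $d^*(a_i,a_{i+k-1})$ range exactly over the multiset $\sigma^*_{k-1}(A)$, so forbidding such a run is exactly the condition $\min\sigma^*_{k-1}(A)\ge d+1$. I expect the delicate point to be the boundary case $d=\floor{n/2}$ with $n$ even, where an arc of length $n/2$ is a geodesic realized in two ways: one must check that a run of clockwise span exactly $n/2$ is still forbidden, and that a run of span strictly greater than $n/2$ is genuinely split by the shorter (counterclockwise) geodesic and hence imposes no constraint. This reduction is the main obstacle; everything afterward is short counting.

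Granting the characterization, the upper bound is a one-line application of the Clough--Myerson identity (Lemma \ref{lemma_sum_is_kn}). If $A$ is in $k$-general $d$-position with $|A|=m\ge k$, then $\sigma^*_{k-1}(A)$ is a multiset of exactly $m$ clockwise distances, each at least $d+1$ by the characterization, while $\sum\sigma^*_{k-1}(A)=(k-1)n$; hence $m(d+1)\le(k-1)n$ and so $m\le\floor{\frac{(k-1)n}{d+1}}$. For the lower bound, set $m=\floor{\frac{(k-1)n}{d+1}}$ and take $A=J^r_{n,m}$ to be a $J$-representation (Definition \ref{definition_J}). By Lemma \ref{lemma_spec_of_J_rep_clockwise}, $\min\sigma^*_{k-1}(A)=\floor{\frac{(k-1)n}{m}}$; since $m\le\frac{(k-1)n}{d+1}$ yields $\frac{(k-1)n}{m}\ge d+1$ and $d+1$ is an integer, we get $\min\sigma^*_{k-1}(A)\ge d+1$, so $A$ is in $k$-general $d$-position and $\gp^k_d(C_n)\ge m$.

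Finally I would dispatch the degenerate cases in which $m=\floor{\frac{(k-1)n}{d+1}}<k$ (so the lemmas on $\sigma^*_{k-1}$ do not literally apply) by hand. Here $n\ge d+1$ forces $\frac{(k-1)n}{d+1}\ge k-1$, hence $m=k-1$; any set of at most $k-1$ vertices is vacuously in $k$-general $d$-position, giving $\gp^k_d(C_n)\ge k-1=m$, and no position set of size $\ge k$ can exist, since the sum argument would force its size to be at most $m<k$. Thus both bounds collapse to $m$, completing the proof.
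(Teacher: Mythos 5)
Your proposal is correct and takes essentially the same route as the paper: the first case via Lemma \ref{lemma_all_vertices}, the lower bound witnessed by a maximally even $J$-representation through Lemma \ref{lemma_spec_of_J_rep_clockwise}, and the upper bound from the Clough--Myerson identity (Lemma \ref{lemma_sum_is_kn}) applied to $\sigma^*_{k-1}$. Your explicit characterization ($A$ is in $k$-general $d$-position iff $\min\sigma^*_{k-1}(A)\ge d+1$) and your separate handling of the degenerate case $m=k-1$ merely make explicit steps that the paper's proof leaves implicit.
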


\begin{proof}
Suppose $1\leq d\leq k-2$. It easily follows from Lemma \ref{lemma_all_vertices} that $V(C_n)$ is a $k$-general $d$-position set and hence $\gp^k_d(C_n)=n$. 

Suppose $k-2<d\leq\floor{\frac{n}{2}}$. Then, by our assumptions we have $3\leq k\leq\floor{\frac{n}{2}}+1$ and hence $2\leq k-1\leq \floor{\frac{n}{2}}$. Let $m=\floor{\frac{(k-1)n}{d+1}}$ and let $r$ be the unique integer such that $(k-1)n=m(d+1)+r$ and $0\leq r<d+1$.

First we will show that $J^0_{n,m}$ (see Definition \ref{definition_J}) is a $k$-general $d$-position set in $C_n$. Suppose $g$ is a geodesic of $C_n$ that contains at least $k$ vertices of $S$. Then, there are $s,s'\in S\cap V(g)$ such that $\spn_S(s,s')=k-1$ and, using Lemma \ref{lemma_spec_of_J_rep_clockwise},
\[d(s,s')=d^*(s,s')\in\supp(\sigma_{k-1}(J^0_{n,m}))=\left\{\floor{\frac{(k-1)n}{m}},\ceil{\frac{(k-1)n}{m}}\right\}.\]
We have
\[d(s,s')\geq \floor{\frac{(k-1)n}{m}}\geq d+1+\floor{\frac{r}{m}}\geq d+1,\]
and hence $\lambda_{C_n}(g)>d$. Thus, $J^0_{n,m}$ is a $k$-general $d$-position set in $C_n$ and hence $\gp^k_d(C_n)\geq m$. 

To see that $\gp^k_d(C_n)\leq m$, suppose that $S\subseteq V(C_n)$ and $|S|>m$. We will prove that $S$ is not a $k$-general $d$-position set in $C_n$. Let us show that there is some $a\in \sigma^*_{k-1}(S)$ with $a\leq d$. Suppose not, then all elements of $\sigma^*_{k-1}(S)$ are greater than $d$ and since $|S|>m$, by Lemma \ref{lemma_sum_is_kn}, we have
\[(k-1)n=\sum\sigma^*_{k-1}(S)\geq |S|(d+1)\geq (m+1)(d+1).\]
But then $(k-1)n=m(d+1)+r\geq m(d+1)+d+1,$
which implies $r\geq d+1$, a contradiction. Let $a\in\sigma^*_{k-1}(S)$ be such that $a\leq d$. Then there is a pair $(s,s')\in S\times S$ with $d(s,s')=d^*(s,s')=a\leq d$ such that a shortest path from $s$ to $s'$ contains $k$ elements of $S$. Hence $S$ is not a $k$-general $d$-position set in $C_n$.
\end{proof}

\begin{corollary}\label{corollary_me_witness}
Suppose $d,k,m$ and $n$ are positive integers such that $1\leq d\leq \floor{\frac{n}{2}}$ and $3\leq k\leq \floor{\frac{n}{2}}+2$. Given that $\gp^k_d(C_n)=m$, it follows that $J^0_{n,m}$ is a $k$-general $d$-position set in $C_n$.
\end{corollary}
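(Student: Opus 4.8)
The plan is to deduce this directly from Theorem \ref{theorem_formula_for_C_n}, extracting from its proof the fact that the extremal value is witnessed by a $J$-representation, and splitting into the two regimes $d\leq k-2$ and $d\geq k-1$. The slight mismatch between the corollary's range $3\leq k\leq\floor{\frac{n}{2}}+2$ and the theorem's range $2\leq k\leq\floor{\frac{n}{2}}+1$ is what will require a little care.

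First I would treat the regime $1\leq d\leq k-2$. Here I would argue directly, rather than through the theorem, so as to simultaneously cover the extra value $k=\floor{\frac{n}{2}}+2$ permitted by the corollary: every subset of $V(C_n)$ is $k$-general $d$-position, since any geodesic $g$ with $|S\cap V(g)|\geq k$ satisfies $\lambda_{C_n}(g)\geq k-1>d$. Consequently $\gp^k_d(C_n)=n$, so $m=n$, and since $J^0_{n,n}=\{\floor{\frac{ni}{n}}\st 0\leq i<n\}=V(C_n)$, the set $J^0_{n,m}$ is exactly $V(C_n)$, which we have just seen is $k$-general $d$-position.

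Second I would treat the regime $k-1\leq d\leq\floor{\frac{n}{2}}$. The key observation is that $d\geq k-1$ together with $d\leq\floor{\frac{n}{2}}$ forces $k\leq\floor{\frac{n}{2}}+1$, so (together with $k\geq 3\geq 2$ and $1\leq d\leq\floor{\frac{n}{2}}$) all the hypotheses of Theorem \ref{theorem_formula_for_C_n} are met, and its second case gives $m=\gp^k_d(C_n)=\floor{\frac{(k-1)n}{d+1}}$. I would then invoke the first half of the proof of Theorem \ref{theorem_formula_for_C_n}, which establishes for precisely this value of $m$ that $J^0_{n,m}$ is a $k$-general $d$-position set: using Lemma \ref{lemma_spec_of_J_rep_clockwise}, any geodesic meeting $J^0_{n,m}$ in at least $k$ vertices contains a $(k-1)$-spanning pair $(s,s')$ whose clockwise distance is at least $\floor{\frac{(k-1)n}{m}}\geq d+1$, forcing its length to exceed $d$. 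This delivers the conclusion.

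The only real subtlety, and the step I expect to be the main (minor) obstacle, is the range bookkeeping: the corollary allows $k=\floor{\frac{n}{2}}+2$, which lies outside the theorem's hypotheses. I would dispatch this by noting that $k=\floor{\frac{n}{2}}+2$ forces $d\leq\floor{\frac{n}{2}}=k-2$, so this boundary value can only occur in the first regime, where the direct argument above applies and no appeal to the theorem is needed.
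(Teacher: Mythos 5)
Your proposal is correct and takes essentially the same route as the paper, which presents the corollary as an immediate consequence of Theorem \ref{theorem_formula_for_C_n}: in the regime $k-1\leq d$ the first half of that theorem's proof exhibits $J^0_{n,m}$ with $m=\floor{\frac{(k-1)n}{d+1}}$ as a $k$-general $d$-position set, while in the regime $d\leq k-2$ one has $m=n$ and $J^0_{n,n}=V(C_n)$, which is trivially $k$-general $d$-position. Your explicit treatment of the boundary value $k=\floor{\frac{n}{2}}+2$ (which forces $d\leq\floor{\frac{n}{2}}=k-2$, so the theorem is never needed there) is bookkeeping the paper leaves tacit, and you handle it correctly.
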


\begin{center}
\begin{tabular}{>{\centering\arraybackslash}p{5em}|cccccccc}
 & \multicolumn{8}{c@{}}{values of $k$}\\
values of $d$  & 2 & 3 & 4 & 5 & 6 & 7 & 8 & 9\\
\hline
1 & 7 & 14 & 14 & 14 & 14 & 14 & 14 & 14\\
2 & 4 & 9 & 14 & 14 & 14 & 14 & 14 & 14\\
3 & 3 & 7 & 10 & 14 & 14 & 14 & 14 & 14\\
4 & 2 & 5 & 8 & 11 & 14 & 14 & 14 & 14\\
5 & 2 & 4 & 7 & 9 & 11 & 14 & 14 & 14\\
6 & 2 & 4 & 6 & 8 & 10 & 12 & 14 & 14\\
7 & 1 & 3 & 5 & 7 & 8 & 10 & 12 & 14\\
\end{tabular}
\captionof{table}{The quantities $\gp^k_d(C_{14})$ for various values of $k$ and $d$.}\label{sophisticatedtable2}
\end{center}

\section{Infinite 2D-grids}\label{section_infinite_2d_grids}

Klav\v{z}ar and Manuel \cite{MR3879620} used a corollary of the famous theorem of Erd\H{o}s and Szerkeres to obtain a lower bound on $\gp(P_\infty^{\Box,2})$. We do the same to obtain a lower bound on $\gp^k(P_\infty^{\Box,2})$. Recall that $\gp^k(P_\infty^{\Box,2})$ is the largest cardinality of a set of vertices $A$ in $\gp^k(P_\infty^{\Box,2})$ with no $k$ vertices in a common geodesic.

\begin{theorem}[Erd\H{o}s and Szekeres]
    For integers $r,s\geq 2$, any sequence of distinct real numbers with length $(r-1)(s-1)+1$ must contain a monotonically increasing subsequence of length $r$ or a monotonically decreasing subsequence of length $s$.
\end{theorem}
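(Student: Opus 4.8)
The plan is to prove this by a pigeonhole argument applied to a pair of monotone-subsequence-length statistics, which is the standard route to the Erd\H{o}s--Szekeres theorem. Write the sequence as $a_1,a_2,\ldots,a_N$ where $N=(r-1)(s-1)+1$. For each index $i\in\{1,\ldots,N\}$, I would define $u_i$ to be the length of a longest monotonically increasing subsequence ending at $a_i$, and $v_i$ to be the length of a longest monotonically decreasing subsequence ending at $a_i$. Both quantities are at least $1$, since the single term $a_i$ is a subsequence of each type.

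The key step is to show that the map $i\mapsto(u_i,v_i)$ is injective. Suppose $i<j$. Since the terms are distinct, either $a_i<a_j$ or $a_i>a_j$. In the first case, any increasing subsequence ending at $a_i$ can be extended by appending $a_j$, so $u_j\geq u_i+1>u_i$; in the second case, the analogous argument applied to decreasing subsequences gives $v_j\geq v_i+1>v_i$. In either case $(u_i,v_i)\neq(u_j,v_j)$, so the $N$ pairs are pairwise distinct.

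To finish I would argue by contradiction. Suppose the sequence has no increasing subsequence of length $r$ and no decreasing subsequence of length $s$. Then $u_i\in\{1,\ldots,r-1\}$ and $v_i\in\{1,\ldots,s-1\}$ for every $i$, so each pair $(u_i,v_i)$ lies in a set of cardinality $(r-1)(s-1)$. But we have produced $N=(r-1)(s-1)+1$ distinct pairs, which contradicts the pigeonhole principle. Hence at least one of the two required monotone subsequences must exist.

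There is no genuine obstacle here: the argument is entirely elementary once the statistics $u_i$ and $v_i$ are introduced, and the only point requiring a moment's care is the injectivity of $i\mapsto(u_i,v_i)$, which rests on the distinctness of the terms. Since this theorem is used only as a black box to obtain the lower bound on $\gp^k(P_\infty^{\Box,2})$, I would include the proof only for completeness.
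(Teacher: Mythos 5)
Your proof is correct: the pair statistics $(u_i,v_i)$, the injectivity argument via distinctness of the terms, and the final pigeonhole count over the $(r-1)(s-1)$ possible pairs are all sound, and this is the standard argument for the Erd\H{o}s--Szekeres theorem. The paper itself offers no proof of this statement at all---it is cited as a classical result and used purely as a black box to derive Corollary \ref{corollary_ES} and the lower bound in Lemma \ref{lemma_k_mon_geodesic}---so your argument supplies a complete, self-contained justification where the paper has none; as you note yourself, including it is a matter of completeness rather than necessity.
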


\begin{corollary}
For every integer $n\geq 2$, every sequence of real numbers $(a_1,\ldots,a_N)$ with length $N\geq (n-1)^2+1$ contains a monotone subsequence of length $n$.
\end{corollary}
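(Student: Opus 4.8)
The plan is to apply the Erd\H{o}s--Szekeres theorem with the symmetric choice $r=s=n$. With this choice $(r-1)(s-1)+1=(n-1)^2+1$, so after discarding all but the first $(n-1)^2+1$ terms of $(a_1,\ldots,a_N)$---which is legitimate, since a monotone subsequence of an initial segment is a monotone subsequence of the whole sequence---the theorem produces either a monotonically increasing subsequence of length $r=n$ or a monotonically decreasing subsequence of length $s=n$. In either case this is a monotone subsequence of length $n$, which is exactly what the corollary asserts.

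The one gap to bridge is that the Erd\H{o}s--Szekeres theorem as stated above requires the terms to be \emph{distinct}, whereas the corollary allows an arbitrary sequence of reals, possibly with repetitions. First I would reduce to the distinct case by a small perturbation. Let $\delta>0$ be chosen smaller than $\frac{1}{N}$ times the least positive difference $|a_i-a_j|$ taken over pairs with $a_i\neq a_j$ (if all the $a_i$ coincide, the sequence is already constant, hence monotone of length $N\geq n$, and there is nothing to prove). Setting $b_i=a_i+i\delta$ then yields a sequence of \emph{distinct} reals, to which the theorem applies directly.

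It then remains to translate a monotone subsequence of $(b_i)$ back into one of $(a_i)$. If $b_{i_1}<b_{i_2}<\cdots<b_{i_n}$ with $i_1<\cdots<i_n$, the choice of $\delta$ forces $a_{i_1}\le a_{i_2}\le\cdots\le a_{i_n}$, so the corresponding terms of $(a_i)$ are nondecreasing; symmetrically, a strictly decreasing subsequence of $(b_i)$ yields a strictly decreasing, hence nonincreasing, subsequence of $(a_i)$. Either way we obtain a monotone subsequence of $(a_i)$ of length $n$. The only point requiring any care is this last implication, namely that $\delta$ is small enough that the additive perturbation never reverses a strict inequality between two distinct values of the original sequence; this is a routine estimate from the definition of $\delta$, and I expect it to be the sole genuine obstacle. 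Alternatively, one could bypass the perturbation entirely by running the standard pigeonhole argument: label each index $i$ with the pair $(x_i,y_i)$, where $x_i$ and $y_i$ are the lengths of the longest nondecreasing and nonincreasing subsequences ending at $i$. These pairs are pairwise distinct, and if no monotone subsequence of length $n$ existed they would all lie in $\{1,\dots,n-1\}^2$, contradicting $N\ge (n-1)^2+1$; this version handles repetitions without any auxiliary construction.
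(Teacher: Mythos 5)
Your proposal is correct and takes essentially the same route the paper does: the paper states this corollary without proof, treating it as the immediate consequence of Erd\H{o}s--Szekeres with $r=s=n$, which is exactly your main argument. Your additional handling of repeated values (via the perturbation $b_i=a_i+i\delta$, or the pigeonhole argument with the pairs $(x_i,y_i)$) fills a genuine small gap that the paper passes over silently, since its statement of Erd\H{o}s--Szekeres requires distinct reals while the corollary does not; both of your fixes are sound.
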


\begin{corollary}\label{corollary_ES}
    If $n\geq 1$ is an integer and $S\subseteq \R^2$ with $|S|\geq (n-1)^2+1$, then $S$ contains $n$ points that form a monotone sequence.
\end{corollary}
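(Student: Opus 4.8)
The plan is to deduce this two-dimensional statement directly from the one-dimensional corollary (the Erd\H{o}s--Szekeres statement about monotone subsequences of length $n$), which I am permitted to assume. The key observation is that a finite set $S\subseteq\R^2$ can be linearly ordered by its $x$-coordinates, and then a monotone subsequence of the corresponding sequence of $y$-coordinates yields points forming a monotone sequence in $\R^2$.

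\medskip

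First I would handle the degenerate small cases: if $n=1$ the claim is vacuous since any single point trivially forms a monotone sequence of length $1$, and if $|S|$ is finite we may write $S=\{p_1,\ldots,p_N\}$ with $N\geq (n-1)^2+1$. Since we are free to perturb, I would first reduce to the case where the points of $S$ are in \emph{general position} in the weak sense that all $x$-coordinates are distinct and all $y$-coordinates are distinct; if not, one can apply a tiny generic rotation of the plane, which preserves the property of a subsequence being monotone (increasing or decreasing in both coordinates simultaneously) while making all coordinates distinct. Assuming distinct $x$-coordinates, relabel the points $p_1,\ldots,p_N$ so that their first coordinates are strictly increasing, and let $a_i$ denote the second coordinate (the $y$-value) of $p_i$, giving a sequence $(a_1,\ldots,a_N)$ of distinct reals of length $N\geq (n-1)^2+1$.

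\medskip

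Next I would apply the preceding corollary to the sequence $(a_1,\ldots,a_N)$ to extract a monotone subsequence $a_{i_1},\ldots,a_{i_n}$ of length $n$, where $i_1<\cdots<i_n$. Because the points were ordered by increasing $x$-coordinate, the indices $i_1<\cdots<i_n$ already guarantee that the first coordinates of $p_{i_1},\ldots,p_{i_n}$ are strictly increasing, while the chosen subsequence makes the second coordinates either increasing or decreasing. Hence the $n$ points $p_{i_1},\ldots,p_{i_n}$ form a monotone sequence in $\R^2$, which is exactly what is required.

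\medskip

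I do not expect any serious obstacle here, since the result is essentially a restatement of the one-dimensional corollary; the only point deserving care is the reduction to distinct coordinates, so that the word ``subsequence'' in the one-dimensional result translates cleanly into ``monotone sequence of points'' in the plane. The main thing to pin down precisely is the intended meaning of ``monotone sequence'' of points of $\R^2$ (namely that both coordinates move monotonically, giving the staircase-type configurations relevant to geodesics in the grid $P_\infty^{\Box,2}$), and to confirm that a generic rotation preserves that notion; once that is fixed, the argument is immediate.
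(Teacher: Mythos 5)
Your proof is correct and is essentially the paper's own (implicit) derivation: the paper states this corollary without proof as an immediate consequence of the preceding one-dimensional corollary, obtained exactly as you do by ordering the points by $x$-coordinate and extracting a monotone subsequence of the $y$-coordinates. One small refinement: since the paper's one-dimensional corollary already allows repeated values, you could avoid the rotation entirely by sorting lexicographically (ties in $x$ broken by $y$), which sidesteps the only delicate point in your write-up, namely that what you actually need is the pullback direction (monotone \emph{after} a rotation implies monotone \emph{before}), and this holds only for sufficiently small rotations of a finite point set, not for rotations in general.
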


We adapt Klav\v{z}ar and Manuel's notion of monotone-geodesic labeling to that of $k$-monotone geodesic labeling. Here $3$-monotone geodesic labeling is equivalent to monotone-geodesic labeling.

\begin{definition}\label{definition_k_mon_geo_lab}
    Let $G$ be a graph. An injective function $f:V(G)\to\R^2$ is called a \emph{$k$-monotone-geodesic labeling} of $G$ if whenever $v_1,\ldots,v_k\in V(G)$ and the sequence $(f(v_1),\ldots, f(v_k))$ is monotone, then there is a geodesic $g\in\G(G)$ with $v_1,\ldots,v_k\in V(g)$.
\end{definition}

Now we connect $k$-monotone-geodesic labelings with $k$-general position sets.

\begin{lemma}\label{lemma_k_mon_geodesic}
    If a graph $G$ admits a $k$-monotone-geodesic labeling for some integer $k\geq 2$, then $\gp^k(G)\geq (k-1)^2$.
\end{lemma}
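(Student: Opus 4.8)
The plan is to prove the contrapositive-style lower bound directly: assuming $G$ admits a $k$-monotone-geodesic labeling $f\colon V(G)\to\R^2$, I will exhibit a $k$-general position set of size exactly $(k-1)^2$. The natural strategy is to cook up a set $T\subseteq\R^2$ of $(k-1)^2$ points that is guaranteed to contain \emph{no} monotone subsequence of length $k$, pull it back through $f$ (using injectivity of $f$), and argue that the preimage is a $k$-general position set in $G$.

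For the first step I need a point configuration in the plane with the property that its longest monotone subsequence has length at most $k-1$. The Erd\H{o}s--Szekeres theorem (Corollary~\ref{corollary_ES}) is exactly the tightness obstruction here: $(k-1)^2$ is precisely the threshold below which one can avoid a monotone $k$-subsequence, and the standard extremal example achieves this. Concretely, I would take the $(k-1)\times(k-1)$ array of points arranged so that reading left to right the $x$-coordinates increase across all $(k-1)^2$ points while the $y$-coordinates are laid out in $k-1$ consecutive decreasing ``blocks'' of $k-1$ points each (i.e.\ the points $(i(k-1)+j,\, -i(k-1)-(k-1-j))$ or a similar explicit formula). One checks that any increasing run is confined within a single block (length $\le k-1$) and any decreasing run picks at most one point from each block (length $\le k-1$); hence the longest monotone subsequence has length $k-1$.

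Now set $A=f^{-1}(T)$. Since $f$ is injective, $|A|=|T|=(k-1)^2$. To see $A$ is in $k$-general position, suppose toward a contradiction that some geodesic $g\in\G(G)$ contained $k$ distinct vertices $v_1,\dots,v_k\in A$; since these lie on a common shortest path they can be listed in geodesic order, and along a geodesic the pairwise distances are additive, so this ordering is a genuine linear order on those vertices. The images $f(v_1),\dots,f(v_k)$ are then $k$ points of $T$ which — here is where I must be a little careful — need to be shown to form a monotone sequence relative to the geodesic ordering. The cleanest route is to invoke the definition of $k$-monotone-geodesic labeling in its contrapositive form: if $k$ vertices do \emph{not} lie on a common geodesic then their images are non-monotone. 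More precisely, the definition says monotone images force a common geodesic, so if no $k$ points of $T$ are monotone, then no $k$ vertices of $A$ can lie on a common geodesic, which is exactly the $k$-general position condition~(\ref{equation_gkp}).

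The step I expect to be the main obstacle is precisely this translation: making sure the logical direction of Definition~\ref{definition_k_mon_geo_lab} is applied correctly, and verifying that ``no $k$ points of $T$ form a monotone sequence'' transfers cleanly to ``no geodesic of $G$ contains $k$ vertices of $A$.'' The definition gives the implication (monotone $\Rightarrow$ lie on a common geodesic); its contrapositive is (no common geodesic $\Rightarrow$ not monotone), so to conclude I want the reverse: I must argue that \emph{any} $k$ vertices sharing a geodesic, when read in geodesic order, yield a monotone image-sequence. If the definition does not directly supply this, I would instead argue that a set with no monotone $k$-subsequence cannot have $k$ of its members forced onto a common geodesic, because the existence of such a geodesic would, under a suitably chosen reading order, produce a monotone $k$-tuple in $T$ — contradicting the Erd\H{o}s--Szekeres-extremal construction of $T$. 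Pinning down exactly which ordering of the geodesic's vertices induces monotonicity of the labels (and confirming the labeling's defining property handles \emph{every} monotone ordering, not just the increasing one) is the one place where care is needed; everything else is bookkeeping.
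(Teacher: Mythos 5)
Your proposal has two fatal gaps, and it is aimed at the wrong inequality. First, the pullback step fails: $f$ is injective but by no means surjective, so if $T\subseteq\R^2$ is your Erd\H{o}s--Szekeres-extremal $(k-1)\times(k-1)$ configuration, the set $A=f^{-1}(T)$ satisfies only $|A|\leq|T|$ and may well be empty---none of your chosen planar points need lie in the image of $f$, and the hypothesis gives you no control whatsoever over what $f(V(G))$ looks like. Second, the obstacle you yourself flagged is indeed fatal: to conclude that $A$ is in $k$-general position you need ``$k$ vertices on a common geodesic $\Rightarrow$ their labels form a monotone sequence in some order,'' which is the \emph{converse} of Definition~\ref{definition_k_mon_geo_lab}; the definition only supplies ``monotone $\Rightarrow$ common geodesic,'' and your proposed repair (that the geodesic would ``under a suitably chosen reading order, produce a monotone $k$-tuple'') simply asserts the converse without proof. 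For an abstract labeling it is false: nothing prevents a geodesic from containing $k$ vertices whose labels are scattered non-monotonically.

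The deeper issue is that the literal statement $\gp^k(G)\geq(k-1)^2$ cannot follow from the hypothesis at all: any graph with fewer than $k$ vertices (e.g.\ $P_2$ with $k=3$, where every vertex subset lies in a single geodesic) trivially admits a $k$-monotone-geodesic labeling, yet $\gp^3(P_2)=2<4$. The ``$\geq$'' in the statement is evidently a typo for ``$\leq$'': the paper's own proof assumes a $k$-general position set $S$ with $|S|\geq(k-1)^2+1$, applies Corollary~\ref{corollary_ES} to the $(k-1)^2+1$ distinct points of $f(S)$ to extract a monotone $k$-tuple $T\subseteq f(S)$ (note $T$ lies inside the image of $f$ by construction, so $f^{-1}(T)$ genuinely has $k$ elements), and then uses the definition in its correct direction to place those $k$ vertices on a common geodesic, contradicting general position. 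This yields the upper bound $\gp^k(G)\leq(k-1)^2$; in the theorem that follows, the lemma supplies that upper bound while the explicit set $A_{k-2}(\cdot,\cdot)$ supplies the matching lower bound, giving equality. So the correct argument runs in exactly the opposite direction from yours: Erd\H{o}s--Szekeres is used to find a monotone $k$-tuple inside a too-large general position set, not to build a monotone-free planar configuration and pull it back.
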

\begin{proof}
    Suppose $f:V(G)\to\R^2$ is a $k$-monotone-geodesic labeling and $S\subseteq V(G)$ is a $k$-general position set of $G$ with $|S|\geq (k-1)^2+1$. Then, by Corollary \ref{corollary_ES}, the set $f(S)\subseteq\R^2$ has a subset $T$ of size $k$ which forms a monotone sequence in $\R^2$. Then $f^{-1}(T)$ has cardinality $k$ and is contained in a single geodesic of $G$, a contradiction.
\end{proof}

We now obtain the main result of this section. A graph $G$ is called a \emph{grid graph} if it is an induced connected subgraph of $P^{\Box, 2}_\infty$.

\begin{theorem}
    Suppose $G$ is a grid graph and $f:V(P_\infty^{\Box,2})\to\R^2$ is the natural labeling of $P_\infty^{\Box,2}$. For an integer $k\geq 2$, if $G$ contains $P_{2k-3}\cprod P_{2k-3}$ as a subgraph and $f\restrict G$ is a $k$-monotone-geodesic labeling, then $\gp^k(G)=(k-1)^2$.
\end{theorem}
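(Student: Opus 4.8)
The plan is to prove the two inequalities $\gp^k(G)\le (k-1)^2$ and $\gp^k(G)\ge (k-1)^2$ separately, and essentially all the work is in the lower bound. The upper bound is exactly the content of Lemma~\ref{lemma_k_mon_geodesic} applied to the labeling $f\restrict G$: since $f\restrict G$ is a $k$-monotone-geodesic labeling, any $k$-general position set $S$ with $|S|\ge (k-1)^2+1$ would, by Corollary~\ref{corollary_ES}, contain $k$ vertices whose labels form a monotone sequence, hence $k$ vertices lying on a common geodesic of $G$, contradicting that $S$ is $k$-general position. Thus $\gp^k(G)\le (k-1)^2$, and no hypothesis beyond the existence of the labeling is used here.

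For the lower bound I would exhibit an explicit $k$-general position set of size $(k-1)^2$ living inside the box $B=P_{2k-3}\cprod P_{2k-3}\subseteq G$, which under the natural labeling occupies the lattice square $[0,2k-4]^2$. Writing $p_{i,j}=(i+j,\;(k-2)+i-j)$, define the ``rotated square''
\[S=\bigl\{\,p_{i,j}\st 0\le i,j\le k-2\,\bigr\}.\]
The map $(i,j)\mapsto p_{i,j}$ is injective and lands in $[0,2k-4]^2$, so $S\subseteq V(B)$ and $|S|=(k-1)^2$; for $k=3$ this recovers the familiar diamond $\{(0,1),(1,0),(1,2),(2,1)\}$ realizing $\gp(P_\infty^{\Box,2})=4$.

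Next I would check that $S$ is $k$-general position in $B$. In a grid the vertex set of any geodesic is, up to reversal, either weakly increasing in both coordinates (a ``NE-chain'') or increasing in one coordinate and decreasing in the other (a ``NW-chain''), and conversely every such chain lies on a single geodesic; so it suffices to bound chain lengths in $S$. A short computation shows that $p_{i,j}\le_{\mathrm{NE}}p_{i',j'}$ forces $i'-i\ge |j'-j|\ge 0$, so along any NE-chain of distinct points the index $i$ strictly increases; since $i\in\{0,\ldots,k-2\}$, every NE-chain has length $\le k-1$. Symmetrically, $p_{i,j}\le_{\mathrm{NW}}p_{i',j'}$ forces $j'-j\ge|i'-i|$, so $j$ strictly increases along any NW-chain, again giving length $\le k-1$. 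Hence no $k$ points of $S$ lie on a common geodesic of $B$.

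Finally I would transfer this from $B$ to $G$, and this is where I expect the only real subtlety to lie, since a priori $G$ could have geodesics (shortest paths that detour outside $B$) carrying many points of $S$. The resolution is that $B$ is a convex subgraph of $G$: for $u,v\in V(B)$ one has $\|u-v\|_1=d_{P_\infty}(u,v)\le d_G(u,v)\le d_B(u,v)=\|u-v\|_1$, where $d_{P_\infty}\le d_G$ because $G$ is an induced subgraph of $P_\infty^{\Box,2}$, $d_G\le d_B$ because $B\subseteq G$, and the outer equalities hold because a box is convex in $P_\infty^{\Box,2}$. Consequently every $G$-geodesic between vertices of $B$ is a $P_\infty^{\Box,2}$-geodesic and therefore stays inside the convex set $B$. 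So if some geodesic $g$ of $G$ carried $k$ points of $S$, its subpath between the first and last of these points would be a geodesic of $B$ containing all $k$ of them, contradicting the previous paragraph. Therefore $S$ is $k$-general position in $G$, giving $\gp^k(G)\ge (k-1)^2$ and, with the upper bound, equality.
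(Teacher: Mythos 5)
Your proposal is correct and takes essentially the same route as the paper: the upper bound comes from the $k$-monotone-geodesic labeling via Erd\H{o}s--Szekeres (Lemma \ref{lemma_k_mon_geodesic}, whose stated ``$\geq$'' is evidently a typo for ``$\leq$''), and the lower bound comes from an explicit diamond of $(k-1)^2$ vertices inside $P_{2k-3}\cprod P_{2k-3}$ --- your rotated square $\{p_{i,j}\}$ is exactly the paper's set $A_{k-2}$, just written in different coordinates. The only difference is one of completeness: you actually prove the two facts the paper asserts without argument, namely the monotone-chain bound showing the diamond is a $k$-general position set, and the convexity of the box in $G$ that justifies transferring the set up to $G$ (the paper invokes only that the box is an isometric subgraph, which by its own Lemma \ref{lemma_isometric_down_convex_down_and_up} is the wrong direction as cited, though the conclusion is salvageable).
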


\begin{proof}
    Since $f\restrict G$ is a $k$-monotone-geodesic labeling, Lemma \ref{lemma_k_mon_geodesic} implies that $\gp^k(G)\geq (k-1)^2$. 
    Given a positive integer $r$, we define
    \[A_r=\left\{(x,y)\in P_\infty^{\Box,2}\st d((0,0),(x,y))\in [0,r]\text{ and }d((0,0),(x,y))\equiv r \bmod 2\right\}.\]
    For $(a,b)\in P_\infty^{\Box,2}$ we let 
    \[A_r(a,b)=\left\{(a,b)+(x,y)\st (x,y)\in A_r\right\}.\]
    The set $A_{k-2}\left(\ceil{\frac{2k-3}{2}},\ceil{\frac{2k-3}{2}}\right)$ has cardinality $(k-1)^2$ and is a $k$-general position set of $P_{2k-3}\cprod P_{2k-3}$. Since $P_{2k-3}\cprod P_{2k-3}$ is an isomoetric subgraph of $G$, it follows that $A_{k-2}\left(\ceil{\frac{2k-3}{2}},\ceil{\frac{2k-3}{2}}\right)$ is a $k$-general position set of $G$.
\end{proof}

Now we obtain an easy corollary, which was stated in Section \ref{section_intro}.

\begin{corollary}\label{corollary_infinite_grid} If $k\geq 2$ is an integer, then
    \[\gp^k(P_\infty^{\Box,2})=(k-1)^2.\]
\end{corollary}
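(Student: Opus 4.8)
The plan is to derive Corollary~\ref{corollary_infinite_grid} as a direct instance of the preceding theorem by verifying that $P_\infty^{\Box,2}$ itself satisfies both hypotheses of that theorem. Take $G = P_\infty^{\Box,2}$ and let $f\colon V(P_\infty^{\Box,2})\to\R^2$ be the natural labeling sending $(x,y)\mapsto(x,y)$. I must check two things: first, that $P_\infty^{\Box,2}$ contains $P_{2k-3}\cprod P_{2k-3}$ as a subgraph, and second, that $f$ is a $k$-monotone-geodesic labeling of $P_\infty^{\Box,2}$. The first is immediate, since any finite box $P_{2k-3}\cprod P_{2k-3}$ embeds isometrically into the infinite grid as an induced subgraph on a $(2k-3)\times(2k-3)$ block of vertices. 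Granting both hypotheses, the theorem yields $\gp^k(P_\infty^{\Box,2})=(k-1)^2$ at once.

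The substantive step is therefore verifying that the natural labeling $f$ is a $k$-monotone-geodesic labeling. By Definition~\ref{definition_k_mon_geo_lab}, I need to show that whenever $v_1,\ldots,v_k\in V(P_\infty^{\Box,2})$ and the sequence of image points $(f(v_1),\ldots,f(v_k))$ is monotone in $\R^2$, then all of $v_1,\ldots,v_k$ lie on a common geodesic. Since $f$ is the identity, this amounts to a geometric fact about the grid: a monotone sequence of lattice points (either both coordinates nondecreasing, or one nondecreasing and the other nonincreasing, along the sequence) all lie on a single shortest path. I would verify this directly using the $\ell^1$ (Manhattan) metric on $P_\infty^{\Box,2}$, under which $d((a,b),(c,e))=|a-c|+|b-e|$. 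The key observation is that a set of lattice points lies on a common geodesic precisely when, after ordering them, consecutive coordinate differences are ``monotone-compatible'' — that is, the $x$-coordinates move in one fixed direction and the $y$-coordinates in one fixed direction along the whole chain, so that the total $\ell^1$ distance from the first to the last point equals the sum of the consecutive $\ell^1$ distances. A monotone sequence in $\R^2$ is exactly one where both coordinate sequences are monotone (in possibly opposite directions), which is what guarantees the additivity of distances and hence that all the points sit on one geodesic.

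I expect the main obstacle to be purely expository rather than mathematical: making precise the equivalence between ``monotone sequence of points in $\R^2$'' and ``lattice points collinear along a grid geodesic,'' and handling the two sub-cases (increasing and decreasing monotonicity) uniformly. Concretely, given a monotone sequence $(f(v_1),\ldots,f(v_k))$, I would argue that the first and last points $v_1$ and $v_k$ determine a geodesic realizing $d(v_1,v_k)=|x_k-x_1|+|y_k-y_1|$, and that every intermediate $v_i$ satisfies $d(v_1,v_i)+d(v_i,v_k)=d(v_1,v_k)$ because monotonicity forces each intermediate coordinate to lie between the corresponding endpoint coordinates; this betweenness in each coordinate is exactly the interval condition characterizing points on an $\ell^1$-geodesic of the grid. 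Once this is established, the theorem applies verbatim and delivers the stated value $(k-1)^2$.
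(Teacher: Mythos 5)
Your overall route is exactly the paper's: the corollary is meant to follow by applying the preceding theorem to $G=P_\infty^{\Box,2}$ itself, the subgraph hypothesis being trivial, so the only substantive work is verifying that the natural labeling is a $k$-monotone-geodesic labeling in the sense of Definition~\ref{definition_k_mon_geo_lab}; your telescoping $\ell^1$ argument in the second paragraph does this correctly. One caution about how you concretize it in the final paragraph: the condition that each intermediate point $v_i$ satisfies $d(v_1,v_i)+d(v_i,v_k)=d(v_1,v_k)$ characterizes the points lying on \emph{some} geodesic from $v_1$ to $v_k$, not the existence of a \emph{common} geodesic through all of them, so that step as stated would not suffice. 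For instance, $(2,0)$ and $(0,2)$ are each metrically between $(0,0)$ and $(2,2)$, yet no single geodesic of the grid contains both, since $d\bigl((2,0),(0,2)\bigr)=4=d\bigl((0,0),(2,2)\bigr)$; of course this configuration is not monotone, which is precisely the point. What makes your argument work is the full monotonicity of the sequence, which you already state in the second paragraph: order the points so that both coordinate sequences are monotone (in their respective fixed directions); then the consecutive distances telescope, $\sum_{i=1}^{k-1}d(v_i,v_{i+1})=|x_k-x_1|+|y_k-y_1|=d(v_1,v_k)$, so concatenating geodesics between consecutive points produces a walk from $v_1$ to $v_k$ of length $d(v_1,v_k)$, which is therefore itself a geodesic containing all $k$ points. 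Written with that chain argument rather than endpoint betweenness, your verification is complete and the theorem applies as you say, giving $\gp^k(P_\infty^{\Box,2})=(k-1)^2$.
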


\section{Thin finite grids}\label{section_thin_grids}

What would we need to do in order to find $\gp^k_d(P_n \cprod P_m)$? If a set $S$ of vertices in $P_n\cprod P_m$ has size $|S|\geq (k-1)^2+1$, then, by the results of the previous section, this implies that $S$ has a monotone subset of size $k$, and then this subset is contained in a geodesic. But, how large must $S$ be to guarantee that $S$ has a monotone subset $M\subseteq S$ of size $k$ such that $\max\{d(u,v)\st u,v\in M\}\leq d$? In this section we answer this question when $m=2$.

For integers $d$ and $k$ with $1\leq k-1\leq d$, we will define subsets $A_{d,k}$ and $B_{d,k}$ of $\Z_{\geq 1}\times\{1,2\}$ that will allow us to easily define $k$-general $d$-position subsets of grids of the form $P_n\cprod P_2$. In what follows, we will see that for every $n\geq 1$, whenever they are defined, the sets $A_{k,d}\cap V(P_n\cprod P_2)$ and $B_{k,d}\cap V(P_n\cprod P_2)$ are $k$-general $d$-position in $P_n\cprod P_2$, and furthermore, one of the sets $A_{k,d}\cap V(P_n\cprod P_2)$ and $B_{k,d}\cap V(P_n\cprod P_2)$, must have the largest possible cardinality of a $k$-general $d$-position set in $P_n\cprod P_2$.

\begin{definition}\label{definition_Akd_Bkd}
Suppose $d$ and $k$ are integers with $d\geq 1$ and $k\geq 2$. If $d\geq 2k-3$, we define a subset $A_{k,d}$ of $\Z_{\geq 1}\times\{1,2\}$ as follows. For each integer $s\geq 0$, let
\[A_{k,d}(s)=\left\{\left(i, 1+\frac{1+(-1)^{i+s}}{2}\right)\st i\in [ds+1,ds+2k-3] \right\}.\] 
Define 
\[A_{k,d}=\bigcup_{s=0}^\infty A_{k,d}(s)\]
(see Figure \ref{figure_A_5_9}).
For $d\geq k-2$ we define a subset $B_{k,d}$ of $\Z_{\geq 1}\times\{1,2\}$ as follows. For each integer $s$, let
\[B_{k,d}(s)=\left[ds+1,ds+k-2\right]\times\{1,2\}.\]
Define
\[B_{k,d}=\bigcup_{s=0}^\infty B_{k,d}(s)\]
(see Figure \ref{figure_B_5_9}).

\begin{figure}
\centering
\begin{tikzpicture}[scale=0.5]
  \def\n{22} 
  \def\m{2}  
  \def\d{9}
  \def\q{\numexpr\n/\d}
  \def\r{mod(\n,\d)}
  \def\ka{5}
  
  \foreach \i in {1,...,\n} {
    \foreach \j in {1,...,\m} {
    \ifthenelse{\i<\n}{
        \draw (\i, \j) -- (\i+1, \j);
      }
      {
        \draw (\i, \j) -- (\i+0.5, \j);
        \node[] () at (\i+1.2,1.5) {$\cdots$};
      };  
    }
    }

  \foreach \j in {1,...,\numexpr\m-1} {
    \foreach \i in {1,...,\n} {
      \draw (\i, \j) -- (\i, \numexpr\j+1);
    }
  }
  
  \foreach \i in {1,...,\n} {
    \foreach \j in {1,...,\m} {
      \fill[white] (\i, \j) circle (3pt);
      \draw (\i, \j) circle (3pt); 
    }
  }



\foreach \i in {1,...,\n} {
    \node[] () at (\i,0) {\tiny $\i$};
}

\foreach \j in {1,...,\m} {
    \node[] () at (0,\j) {\tiny $\j$};
}


\foreach \s in {0,...,\q} {
    \ifthenelse{\s=\q}
    {
        \pgfmathparse{\r}
        \foreach \i in {1,...,\pgfmathresult} {
            \fill[black] ({\i+\s*\d}, {1+(1+(-1)^(\i+\s))/2}) circle (3pt);
        }
    }
    {
        \foreach \i in {1,...,\numexpr\ka*2-3} {
            \fill[black] ({\i+\s*\d}, {1+(1+(-1)^(\i+\s))/2}) circle (3pt);
        }
    }
}

\end{tikzpicture}
\caption{The set $A_{5,9}\subseteq\Z_{\geq 1}\times\{1,2\}$.}
\label{figure_A_5_9}
\end{figure}

\begin{figure}
\centering
\begin{tikzpicture}[scale=0.5]
  \def\n{22} 
  \def\m{2}  
  \def\d{9}
  \def\q{\numexpr\n/\d}
  \def\r{mod(\n,\d)}
  \def\ka{5}
  
  \foreach \i in {1,...,\n} {
    \foreach \j in {1,...,\m} {
    \ifthenelse{\i<\n}{
        \draw (\i, \j) -- (\i+1, \j);
      }
      {
        \draw (\i, \j) -- (\i+0.5, \j);
        \node[] () at (\i+1.2,1.5) {$\cdots$};
      };  
    }
    }

  \foreach \j in {1,...,\numexpr\m-1} {
    \foreach \i in {1,...,\n} {
      \draw (\i, \j) -- (\i, \numexpr\j+1);
    }
  }
  
  \foreach \i in {1,...,\n} {
    \foreach \j in {1,...,\m} {
      \fill[white] (\i, \j) circle (3pt);
      \draw (\i, \j) circle (3pt); 
    }
  }



\foreach \i in {1,...,\n} {
    \node[] () at (\i,0) {\tiny $\i$};
}

\foreach \j in {1,...,\m} {
    \node[] () at (0,\j) {\tiny $\j$};
}


\foreach \s in {0,...,\q} {
    \ifthenelse{\s=\q}
    {
        \pgfmathparse{\r}
        \foreach \i in {1,...,3} {
            \fill[black] ({\i+\s*\d}, 1) circle (3pt);
            \fill[black] ({\i+\s*\d}, 2) circle (3pt);
        }
    }
    {
        \foreach \i in {1,...,\numexpr\ka-2} {
            \fill[black] ({\i+\s*\d}, 1) circle (3pt);
            \fill[black] ({\i+\s*\d}, 2) circle (3pt);
        }
    }
}

\end{tikzpicture}
\caption{The set $B_{5,9}\subseteq\Z_{\geq 1}\times\{1,2\}$.}
\label{figure_B_5_9}
\end{figure}
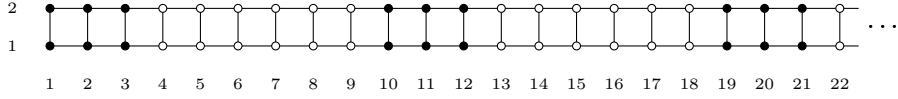

\end{definition}

First we show that $A_{k,d}$ and $B_{k,d}$ yield $k$-general $d$-position sets in grids of the form $P_n\cprod P_2$, and we take note of the cardinalities of these $k$-general $d$-position sets.

\begin{lemma}\label{lemma_Akd_and_Bkd}
Suppose $k$ and $d$ are integers with $d\geq 1$ and $k\geq 2$. 
\begin{enumerate}
    \item Suppose $d\geq 2k-3$. Then $A_{k,d}$ is a $k$-general $d$-position set in $P_{\infty}\cprod P_2$. Furthermore, for every integer $n\geq 1$, $A_{k,d}\cap V(P_n\cprod P_2)$ is a $k$-general $d$-position set in $P_n\cprod P_2$ with
    \[|A_{k,d}\cap V(P_n\cprod P_2)|=(2k-3)\floor{\frac{n}{d}}+\min(n\bmod d,2k-3).\]
    \item Suppose $d\geq k-2$. Then $B_{k,d}$ is a $k$-general $d$-position set in $P_{\infty}\cprod P_2$. Furthermore, for every integer $n\geq 1$, $B_{k,d}\cap V(P_n\cprod P_2)$ is a $k$-general $d$-position set in $P_\infty\cprod P_2$ with
    \[|B_{k,d}\cap V(P_n\cprod P_2)|=2(k-2)\floor{\frac{n}{d}}+2\min(n\bmod d,k-2).\]
\end{enumerate}
\end{lemma}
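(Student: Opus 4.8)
The plan is to prove both parts by the same two-step scheme: first reduce from the finite grid $P_n\cprod P_2$ to the infinite strip $P_\infty\cprod P_2$, then verify the $k$-general $d$-position property on the strip and count. For the reduction, observe that $P_n\cprod P_2$ is a \emph{convex} subgraph of $P_\infty\cprod P_2$ (a shortest path between two vertices whose columns lie in $\{1,\ldots,n\}$ never leaves those columns), so by Lemma \ref{lemma_isometric_down_convex_down_and_up}(2) it suffices to show that the full sets $A_{k,d}$ and $B_{k,d}$ are $k$-general $d$-position in $P_\infty\cprod P_2$; then each intersection $A_{k,d}\cap V(P_n\cprod P_2)$ is a subset of a $k$-general $d$-position set, hence itself $k$-general $d$-position in the strip, hence in $P_n\cprod P_2$ by convexity. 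Throughout I will use three basic facts about $P_\infty\cprod P_2$: a geodesic is monotone in each coordinate; a set of vertices with pairwise distinct columns lies on a common geodesic if and only if its rows are monotone when the vertices are sorted by column; and for any geodesic $g$ and any $T\subseteq V(g)$ one has $\lambda(g)\ge(\text{column span of }T)+(\text{row span of }T)$, where the row span lies in $\{0,1\}$.

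For part (1), the crucial observation is that the exponent $i+s$ in the definition of $A_{k,d}$ makes $A_{k,d}$, listed in increasing column order as $P_0,P_1,P_2,\ldots$, into a single seamless zigzag: within a block, consecutive columns flip the parity of $i+s$ and hence alternate rows, and across the boundary from block $s$ to block $s+1$ the relevant parities differ by $2k-5$, which is odd, so the alternation continues with no defect. Consequently the row of $P_j$ is determined by the parity of $j$, while the column satisfies $c(P_j)=d\floor{j/(2k-3)}+1+(j\bmod(2k-3))$. I will then take any geodesic $g$ with $|A_{k,d}\cap V(g)|\ge k$, let $P_{j_{\min}},\ldots,P_{j_{\max}}$ be the points of $A_{k,d}$ on it, and compute the column span as $c(P_{j_{\max}})-c(P_{j_{\min}})=\Delta j+\nu(d-(2k-3))$, where $\Delta j=j_{\max}-j_{\min}$ and $\nu\ge 0$ counts the block boundaries crossed. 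Because the rows are monotone along $g$ and equal the index parities, a short count gives $\Delta j\ge 2k-3$ when both rows occur and $\Delta j\ge 2k-2$ when only one row occurs; and since a single block cannot contain $k$ monotone points (it holds at most $k-1$), the $k$ points force $\nu\ge 1$. Using $d\ge 2k-3$ so that $d-(2k-3)\ge 0$, and adding the extra $+1$ from the row span whenever both rows appear, these inputs yield $\lambda(g)\ge d+1$ in every case, the desired contradiction. The cardinality count is then routine: each period of $d$ columns contributes $2k-3$ points, giving $(2k-3)\floor{n/d}$ from the full periods and $\min(n\bmod d,2k-3)$ from the leftover columns.

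Part (2) is handled by a direct count on the geodesic rather than a monotone-subsequence argument, since the columns of $B_{k,d}$ are not distinct. Given a geodesic $g$ of length at most $d$, write its column range as $[\alpha,\beta]$ with $\beta-\alpha\le d-\epsilon$, where $\epsilon\in\{0,1\}$ records whether $g$ switches rows. Every \emph{block column} of $B_{k,d}$ lying in $[\alpha,\beta]$ contributes exactly one point of $B_{k,d}$ to $V(g)$, except that the switch column (if it is a block column) contributes two; hence $|B_{k,d}\cap V(g)|\le(\#\text{block columns in }[\alpha,\beta])+\epsilon$. Because consecutive blocks are separated by $d$ and $d\ge k-2$, the window $[\alpha,\beta]$ meets at most two consecutive blocks; writing $x$ and $y$ for the numbers of block columns taken from each, the window must have width at least $d+(x+y)-(k-1)$, so $x+y\le k-1-\epsilon$ and therefore $|B_{k,d}\cap V(g)|\le k-1<k$. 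The cardinality count mirrors part (1), now with $2(k-2)$ points per period.

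The main obstacle is part (1), and specifically the bookkeeping that converts the defining formula into the clean statement that the rows alternate with the column index. Everything downstream—the linear span identity $\Delta j+\nu(d-(2k-3))$ and the two separate $\Delta j$ bounds for the one-row and two-row cases—is easy once the seamless-zigzag structure is isolated; but matching the parities at the block boundaries is exactly where the otherwise mysterious $(-1)^{i+s}$, with its dependence on $s$, earns its keep, and the argument would break if the odd block length $2k-3$ were replaced by an even number.
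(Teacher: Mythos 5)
Your overall architecture is sound and is in several places tighter than the paper's own treatment: the paper proves the strip case by a terse two-block count (each block meets a geodesic in at most $\ceil{\frac{2k-3}{2}}=k-1$ points, and the contributions of two adjacent blocks to a length-$d$ geodesic sum to at most $k-1$), and it dismisses the restriction to $P_n\cprod P_2$ and the cardinality formulas in one line; you instead make the restriction precise via convexity and Lemma \ref{lemma_isometric_down_convex_down_and_up}(2), prove part (1) by global index arithmetic on the zigzag, and prove part (2) by an explicit window count. Your part (2) argument, your reduction, and both cardinality counts are correct.

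However, the key structural claim in your part (1) --- that the block-boundary parities differ by $2k-5$, so the zigzag is seamless --- does not follow from Definition \ref{definition_Akd_Bkd} as printed. There, $i$ is the \emph{global} column, ranging over $[ds+1,ds+2k-3]$, so across the boundary the parity key $i+s$ jumps by $\bigl(d(s+1)+1+(s+1)\bigr)-\bigl(ds+2k-3+s\bigr)=d-2k+5$, which is even whenever $d$ is odd: the last point of block $s$ and the first point of block $s+1$ then lie in the \emph{same} row, the row of $P_j$ is no longer determined by the parity of $j$, and your bounds $\Delta j\geq 2k-3$ (two rows) and $\Delta j\geq 2k-2$ (one row) fail. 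Indeed the lemma as literally stated is false for odd $d$: for $k=5$, $d=9$ the printed set contains $(7,1),(10,1),(12,1),(14,1),(16,1)$, which are five points on the row-$1$ geodesic from column $7$ to column $16$, a geodesic of length $9=d$. Your computation of $2k-5$ is the one obtained from the re-indexed set
\[A_{k,d}(s)=\left\{\left(ds+i,\,1+\tfrac{1+(-1)^{i+s}}{2}\right)\st i\in[1,2k-3]\right\},\]
where the exponent uses the \emph{local} index; this is exactly what Figure \ref{figure_A_5_9} depicts (its code puts the local index in the exponent), it coincides with the printed definition exactly when $d$ is even, and it is the definition under which the lemma, your proof, and the paper's own unproved adjacent-block claim are all correct. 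So your argument is right, but only after this definitional correction, which you must state explicitly: as written, the parity step is not valid for the printed $A_{k,d}$. (In fact your ``seamless zigzag'' analysis is precisely what exposes this discrepancy between the paper's definition and its figure.)
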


\begin{proof}
Let us prove that, when they are defined $A_{k,d}$ and $B_{k,d}$ are $k$-general $d$-position sets in $P_\infty\cprod P_2$. The rest follows easily from Lemma \ref{lemma_all_vertices} as well as the definitions of $A_{k,d}$ and $B_{k,d}$. When $A_{k,d}$ and $B_{k,d}$ are defined, for each $s\geq 0$ we have $|A_{k,d}(s)|=2k-3$ and $|B_{k,d}(s)|=2(k-2)=2k-4$. For any geodesic $g$ of $P_\infty\cprod P_2$, it follows that for every $s\geq 0$ we have $|A_{k,d}(s)\cap V(g)|\leq \ceil{\frac{2k-3}{2}}=k-1$ and $|B_{k,d}(s)\cap V(g)|\leq k-1$. Suppose $s$ and $s'$ are integers such that $s,s'\geq 0$ and $|s-s'|\leq 1$. Suppose $g$ is a geodesic of $P_\infty\cprod P_2$ with length $\lambda(g)=d$. If $|A_{k,d}(s)\cap V(g)|=a\leq k-1$ then $|A_{k,d}(s')\cap V(g)|=k-1-a$. Thus, it follows that $|A_{k,d}\cap V(g)|\leq k-1$. Similarly $|B_{k,d}\cap V(g)|\leq k-1$. Therefore $A_{k,d}$ and $B_{k,d}$ are $k$-general $d$-position sets in $P_{\infty}\cprod P_2$.
\end{proof}

First we obtain the desired result in the case where $k=2$.

\begin{theorem}\label{theorem_when_k_is_2}
    Suppose $d$ and $n$ are integers such that $d\geq 1$ and $n\geq 1$. Then
    \[\gp^2_d(P_n\cprod P_2)=\ceil{\frac{n}{d}}.\]    
\end{theorem}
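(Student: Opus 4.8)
The plan is to prove the two inequalities $\gp^2_d(P_n\cprod P_2)\geq\ceil{\frac{n}{d}}$ and $\gp^2_d(P_n\cprod P_2)\leq\ceil{\frac{n}{d}}$ separately, using the machinery already developed. The observation guiding both bounds is that a $2$-general $d$-position set is simply a set of vertices whose pairwise graph distances all exceed $d$, and that in the Cartesian product $P_n\cprod P_2$ the distance between $(i,a)$ and $(j,b)$ is the Manhattan distance $|i-j|+|a-b|$. In particular, the ``box'' subgraph spanned by a block of $c$ consecutive columns is isomorphic to $P_c\cprod P_2$, is convex (hence isometric), and has diameter $(c-1)+1=c$. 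Note the crucial shift from the single-row case: a $c$-column two-row block has diameter $c$ rather than $c-1$, which is exactly why the answer here is $\ceil{\frac{n}{d}}$ rather than the $\ceil{\frac{n}{d+1}}$ that appears for $P_n$ in Theorem \ref{theorem_paths}.

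For the lower bound I would invoke Lemma \ref{lemma_Akd_and_Bkd}(1) with $k=2$, which applies since $d\geq 1=2k-3$. That lemma guarantees that $A_{2,d}\cap V(P_n\cprod P_2)$ is a $2$-general $d$-position set, and its stated cardinality specializes to $(2k-3)\floor{\frac{n}{d}}+\min(n\bmod d,2k-3)=\floor{\frac{n}{d}}+\min(n\bmod d,1)$. Writing $n=qd+r$ with $0\leq r<d$, this equals $q$ when $r=0$ and $q+1$ when $r\geq 1$, i.e.\ exactly $\ceil{\frac{n}{d}}$. Hence $\gp^2_d(P_n\cprod P_2)\geq\ceil{\frac{n}{d}}$. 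Alternatively one can exhibit the set directly: place one vertex in every $d$-th column, alternating rows, so that consecutive chosen vertices are at distance $d+1$ and all other pairs are at distance $\geq 2d\geq d+1$.

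For the upper bound I would partition the columns $\{1,\dots,n\}$ into the $\ceil{\frac{n}{d}}$ consecutive blocks $I_j=\{(j-1)d+1,\dots,\min(jd,n)\}$ and set $H_j=(P_n\cprod P_2)[I_j\times\{1,2\}]$. Each $H_j\cong P_{|I_j|}\cprod P_2$ with $1\leq|I_j|\leq d$ is an isometric subgraph, and since its diameter is $|I_j|\leq d$, no two distinct vertices of $H_j$ are at distance greater than $d$; thus $\gp^2_d(H_j)=1$. Since $V(P_n\cprod P_2)=\bigcup_j V(H_j)$, Lemma \ref{lemma_isometric_upper_bound} yields $\gp^2_d(P_n\cprod P_2)\leq\sum_{j}\gp^2_d(H_j)=\ceil{\frac{n}{d}}$, and the two bounds combine to give the claimed equality.

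There is no serious obstacle here for the case $k=2$; the step requiring most care is the elementary bookkeeping, namely verifying that the block count is exactly $\ceil{\frac{n}{d}}$ (equivalently $\floor{\frac{n-1}{d}}+1$), that each block is genuinely isometric, and the cardinality simplification $\floor{\frac{n}{d}}+\min(n\bmod d,1)=\ceil{\frac{n}{d}}$. The one conceptual point is the choice of blocks of width exactly $d$, so that each has diameter at most $d$; this width is forced by the extra unit of diameter contributed by the second row, and it is precisely what distinguishes this formula from the path formula.
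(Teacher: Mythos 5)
Your proof is correct and follows essentially the same route as the paper: the lower bound via Lemma \ref{lemma_Akd_and_Bkd} with $k=2$ is identical, and your upper bound uses the same partition into width-$d$ column blocks of diameter at most $d$, with the only cosmetic difference being that you route the pigeonhole through Lemma \ref{lemma_isometric_upper_bound} (together with the observation $\gp^2_d(H_j)=1$) while the paper applies the pigeonhole to that partition directly.
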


\begin{proof}
Since $d\geq 2(2)-3=1$ the set $A_{2,d}$ is defined, and by Lemma \ref{lemma_Akd_and_Bkd}, $A_{2,d}\cap V(P_n\cprod P_2)$ is is $2$-general $d$-position in $P_n\cprod P_2$. Also by Lemma \ref{lemma_Akd_and_Bkd},
\[|A_{2,d}\cap V(P_n\cprod P_2)|=\floor{\frac{n}{d}}+\min(n\bmod d,1)=\ceil{\frac{n}{d}}.\]
Thus $\gp^2_d(P_n\cprod P_2)\geq \ceil{\frac{n}{d}}$.

Suppose $A$ is a $2$-general $d$-position set in $P_n\cprod P_2$ with $|A|>\ceil{\frac{n}{d}}$. Let $q=\floor{\frac{n}{d}}$ and $r=n\bmod d$. Then $n=qd+r$ where $0\leq r<d$. For each $i\in\{0,\ldots,q-1\}$ let $X_i=[di+1,d(i+1)]\times\{1,2\}$, and if $r>0$ let $X_q=[dq+1,dq+r]\times\{1,2\}$. Since $|A|>\ceil{\frac{n}{d}}$ at least one part of the partition defined by the $X_i$'s must contain at least two vertices of $A$. But the diameter of each induced isometric subgraph $(P_n\cprod P_2)[X_i]$ is at most $d$, which implies that $A$ is not $2$-general $d$-position in $P_n\cprod P_2$.
\end{proof}

Next we obtain the desired result in the case where $k=3$.

\begin{theorem}\label{theorem_when_k_is_3}
    Suppose $d$ and $n$ are integers such that $d\geq 1$ and $n\geq 1$. 
    \begin{enumerate}
        \item If $n\leq d$ then
        \[\gp^3_d(P_n\cprod P_2)=\begin{cases}
        2 &\text{if $n=1$}\\
        \min(n,3) &\text{if $n\geq 2$.}\\
        \end{cases}\]
        \item If $n\geq d+1$ then
        \[\gp^3_d(P_n\cprod P_2)=\begin{cases}
        \displaystyle 2\floor{\frac{n}{2}}+2\min(n\bmod 2,1) &\text{if $d=2$}\\[10pt]
        \displaystyle 3\floor{\frac{n}{d}}+\min(n\bmod d,3)&\text{if $d\geq 3$.}
        \end{cases}\]
    \end{enumerate}
\end{theorem}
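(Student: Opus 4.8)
The plan is to read off all lower bounds from the explicit $3$-general $d$-position sets of Lemma~\ref{lemma_Akd_and_Bkd} and to obtain all upper bounds from the isometric cover lemma, Lemma~\ref{lemma_isometric_upper_bound}, with a single case ($d\geq 3$ and $n\bmod d=1$) requiring an extra idea. Throughout I would use that $\gp^3_d=\gp_d$ and that $P_n\cprod P_2$ is isometrically embedded in $\Z^2$ under the $\ell_1$ metric, so that three of its vertices lie on a common geodesic precisely when one of them lies in the coordinatewise rectangle spanned by the other two; the length of that geodesic is the $\ell_1$ distance of the two extreme points.

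For part (1), since $n\leq d=\diam(P_n\cprod P_2)$, every geodesic has length at most $n\leq d$, so being $3$-general $d$-position is the same as being a general position set, and hence $\gp^3_d(P_n\cprod P_2)=\gp(P_n\cprod P_2)$. For $n=1$ the graph is $K_2$, giving $2$. For $n=2$ it is $C_4$, in which any three of the four vertices include an antipodal pair whose geodesic passes through a third vertex, so $\gp=2$. For $n\geq 3$ the set $\{(1,1),(2,2),(3,1)\}$ has no coordinatewise betweenness, witnessing $\gp\geq 3$; and any four chosen vertices must place two in each row (three in one row are horizontally collinear), after which a short inspection of the extreme $x$-coordinates produces a vertex lying in the rectangle spanned by two others. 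This gives $\gp=\min(n,3)$.

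For part (2) the lower bounds are immediate: when $d\geq 3$ we have $d\geq 2k-3=3$, so Lemma~\ref{lemma_Akd_and_Bkd}(1) shows $A_{3,d}\cap V(P_n\cprod P_2)$ is $3$-general $d$-position of size $3\floor{n/d}+\min(n\bmod d,3)$, and when $d=2$ we use $B_{3,2}\cap V(P_n\cprod P_2)$ from Lemma~\ref{lemma_Akd_and_Bkd}(2), of size $2\floor{n/2}+2\min(n\bmod 2,1)$. For the upper bounds I would partition the columns into consecutive blocks, each inducing an isometric copy of some $P_\ell\cprod P_2$, and combine Lemma~\ref{lemma_isometric_upper_bound} with part (1). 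For $d=2$, blocks of size $2$ (each a $C_4$ with $\gp^3_2=2$) together with a final block of size $1$ when $n$ is odd (with $\gp^3_2(P_1\cprod P_2)=2$) reproduce the $d=2$ formula exactly. For $d\geq 3$, write $n=qd+r$ with $0\leq r<d$; when $r=0$ or $r\geq 2$, blocks of size $d$ (each with $\gp^3_d(P_d\cprod P_2)=3$) plus a final block of size $r$ (with $\gp^3_d(P_r\cprod P_2)=\min(r,3)$ by part (1)) yield the desired bound $3q+\min(r,3)$.

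The main obstacle is the leftover case $d\geq 3$, $r=1$: the naive cover into $q$ blocks of size $d$ and one block of size $1$ gives only $3q+2$, one more than the target $3q+1$. To close this gap I would instead use $q-1$ blocks of size $d$ together with a final block of size $d+1$ covering the last $d+1$ columns, so Lemma~\ref{lemma_isometric_upper_bound} gives $\gp^3_d(P_n\cprod P_2)\leq 3(q-1)+\gp^3_d(P_{d+1}\cprod P_2)$. The crux is then the base estimate $\gp^3_d(P_{d+1}\cprod P_2)=4$: each row is the horizontal path from $(1,i)$ to $(d+1,i)$, a geodesic of length exactly $d$, so the $3$-general $d$-position condition forces at most two chosen vertices per row, hence at most $4$ in total, while $A_{3,d}$ already supplies $4$. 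Substituting yields $3(q-1)+4=3q+1$, completing the proof. I expect the two genuinely non-mechanical points to be the ``no four vertices in general position'' step of part (1) and the realization that the final block must be enlarged to size $d+1$ exactly when $r=1$.
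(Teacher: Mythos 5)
Your proof is correct, and it shares the paper's overall skeleton: lower bounds read off from the explicit sets $A_{3,d}$ and $B_{3,2}$ of Lemma \ref{lemma_Akd_and_Bkd}, upper bounds from the isometric cover lemma (Lemma \ref{lemma_isometric_upper_bound}) applied to column blocks, with part (1) serving as the base case. The genuine difference is how you treat the one delicate case, $d\geq 3$ with $n\bmod d=1$. The paper keeps $q$ blocks of size $d$ plus the singleton last column and runs a bespoke contradiction: if $|A|=3q+2$, counting forces $|A\cap X_i|=3$ in every full block and both vertices of the last column into $A$; the pigeonhole principle then places two vertices of $A$ in a single row of the last full block, and extending that row by one column yields a geodesic of length exactly $d$ containing three points of $A$. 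You instead enlarge the final block to $d+1$ columns, establish the base estimate $\gp^3_d(P_{d+1}\cprod P_2)=4$ (each full row spans $d+1$ columns and is a geodesic of length exactly $d$, so at most two chosen vertices per row, while $A_{3,d}$ supplies four), and then apply Lemma \ref{lemma_isometric_upper_bound} to $q-1$ blocks of size $d$ plus this one long block, getting $3(q-1)+4=3q+1$. Both arguments hinge on the same combinatorial fact---a row of $d+1$ consecutive columns is a length-$d$ geodesic---but your packaging is more modular: the exceptional case becomes just another base value fed into the cover lemma rather than an inline pigeonhole argument, and the device of enlarging the last block when the remainder is too small is a reusable template (e.g., for attempting $P_n\cprod P_3$, cf.\ Remark \ref{remark_Moore_identity}). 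Two cosmetic remarks: your part (1) witness $\{(1,1),(2,2),(3,1)\}$ is exactly $A_{3,d}(0)$, so it coincides with the paper's witness; and the phrase ``$n\leq d=\diam(P_n\cprod P_2)$'' is a slip, since $\diam(P_n\cprod P_2)=n$, though the fact you actually use---every geodesic of $P_n\cprod P_2$ has length at most $n\leq d$---is correct.
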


\begin{proof}
Let us prove (1). Suppose $1=n\leq d$. In $P_1\cprod P_2$, there are no geodesics with length greater than $d\geq 1$, so $\gp^3_d(P_1\cprod P_2)=\gp^3(P_1\cprod P_2)=\gp^3(P_2)=2$. Suppose $n=2$. Similarly, in $P_2\cprod P_2$, there are no geodesics with length greater than $d\geq 2$ and hence $\gp^3_d(P_2\cprod P_2)=\gp^3(P_2\cprod P_2)=2$.

Suppose $n\leq d$ and $n\geq 3$. Then, $A_{3,d}$ is defined (because $d\geq 2(3)-3$) and by Lemma \ref{lemma_Akd_and_Bkd}, $A_{3,d}\cap V(P_n\cprod P_2)$ is a $3$-general $d$-position set in $P_n\cprod P_2$ with 
\[|A_{3,d}\cap V(P_n\cprod P_2)|=3\floor{\frac{n}{d}}+\min(n\bmod d,3)=3.\]
Thus, $\gp^3_d(P_n\cprod P_2)\geq 3$. Let us prove that $\gp^3_d(P_n\cprod P_2)\leq 3$. For the sake of contradiction suppose $A$ is a $k$-general $d$-position set in $P_n\cprod P_2$ with $|A|>3$. Without loss of generality we can assume that $|A|=4$. Since $[1,n]\times\{1\}$ and $[1,n]\times\{2\}$ are vertex sets of geodesics of length less than $d$, it follows that $|A\cap ([1,n]\times\{1\})|=2$ and $|A\cap ([1,n]\times\{2\})|=2$. Let $i_0$ be the least $i\in[1,n]$ such that $A\cap \{(i,1),(i,2)\}\neq\emptyset$. Without loss of generality, suppose $(i_0,1)\in A$. Then $\{(i_0,1)\}\cup ([i_0,n]\times\{2\})$ is the vertex set of a geodesic of $P_n\cprod P_2$ with length at most $d$ containing $3$ vertices from $A$, a contradiction. This establishes (1).

Now let us prove (2). For the remainder of the proof, let $q=\floor{\frac{n}{d}}$ and  $r=n\bmod d$. Suppose $n\geq d+1$ and $d=2$. Notice that $r\in\{0,1\}$. By Lemma \ref{lemma_Akd_and_Bkd}, $B_{3,2}\cap V(P_n\cprod p_2)$ is a $3$-general $2$-position set in $P_n\cprod P_2$ with 
\[|B_{3,2}\cap V(P_n\cprod p_2)|=2\floor{\frac{n}{2}}+2\min(r,1).\]
Thus $\gp^3_2(P_n\cprod P_2)\geq 2\floor{\frac{n}{2}}+2\min(r,1)$. To show that $\gp^3_2(P_n\cprod P_2)\leq 2\floor{\frac{n}{2}}+2\min(r,1)$ we will use Lemma \ref{lemma_isometric_upper_bound}. For $i\in\{0,\ldots,q-1\}$, let $X_i=[2i+1,2i+2]\times\{1,2\}$. If $r=1$, let $X_q=\{2q+1\}\times\{1,2\}$ and if $r=0$ let $X_q=\emptyset$. For $i\in\{0,\ldots,q\}$, let $H_i=(P_n\cprod P_2)[X_i]$ and note that $H_i$ is an isometric subgraph of $P_n\cprod P_2$. Furthermore, $V(P_n\cprod P_2)=\bigcup_{i=0}^qV(H_i)$. Hence, by Lemma \ref{lemma_isometric_upper_bound}, and by using the fact that $H_i\cong P_2\cprod P_2$ for $i\in\{0,\ldots,q-1\}$ and $H_q\cong P_r\cprod P_2$, we obtain
\[\gp^3_2(P_n\cprod P_2)\leq \sum_{i=0}^q\gp^3_2(H_i)=\gp^3_2(P_2\cprod P_2)q+\gp^3_2(P_r\cprod P_2)=2q+2m=2\ceil{\frac{n}{2}}.\]

Suppose $n\geq d+1$ and $d\geq 3$. By Lemma \ref{lemma_Akd_and_Bkd}, $A_{3,d}$ is a $3$-general $d$-position set in $P_n\cprod P_2$ with 
\[|A_{3,d}\cap V(P_n\cprod P_2)|=3\floor{\frac{n}{d}}+\min(r,3).\]
Thus $\gp^3_d(P_n\cprod P_2)\leq 3\floor{\frac{n}{d}}+\min(r,3)$. Now we will prove the reverse direction of this inequality. 

For the remainder of the proof, let us fix some notation. For $i\in\{0,\ldots,q-1\}$, let $X_i=[di+1,di+d]\times\{1,2\}$. If $r>0$, let $X_q=[dq+1,dq+r]\times\{1,2\}$ and if $r=0$, let $X_q=\emptyset$. For $i\in\{0,\ldots,q-1\}$ let $H_i=(P_n\cprod P_2)[X_i]$.

First, suppose $r\neq 1$. By Lemma \ref{lemma_isometric_upper_bound}, it follows from (1) that
\begin{align*}
    \gp^3_d(P_n\cprod P_2)\leq \sum_{i=0}^q\gp^3_d(H_i)&=\gp^3_d(P_d\cprod P_2)q+\gp^3_d(P_r\cprod P_2)\\
        &=\min(d,3)q+\min(r,3)\\
        &=3q+m,
\end{align*}
as desired.

Suppose $r=1$. Then $V(H_q)=X_q=\{dq+1\}\times\{1,2\}$ contains two vertices. We must show that $\gp^3_d(P_n\cprod P_2)\leq 3q+1$. For the sake of contradiction, suppose $A$ is a $3$-general $d$-position set in $P_n\cprod P_2$ with $|A|>3q+1.$ Without loss of generality suppose $|A|=3q+2$. For $i\in\{0,\ldots,q-1\}$ we have $\gp^3_d(H_i)=\gp^3_d(P_d\cprod P_2)=3$ and thus, by Lemma \ref{lemma_isometric_down_convex_down_and_up}, $|A\cap X_i|=3$. This implies that $A\cap X_q=2$. By the pigeonhole principle, we can let $j\in\{1,2\}$ be such that $[d(q-1)+1,dq]\times\{j\}$ contains two vertices of $A$. But then $[d(q-1)+1,dq+1]\times\{j\}$ is the vertex set of a geodesic of length $d$ that contains three vertices of $A$, a contradiction.
\end{proof}

Before we obtain the value of $\gp^k_d(P_n\cprod P_2)$ for $k\geq 4$, we need a few lemmas.

\begin{lemma}\label{lemma_any}
Suppose $d$ and $k$ are positive integers with $k\geq 4$ and $d\geq 2k-3$. Let $A$ be any $k$-general $d$-position set in $P_d\cprod P_2$ with $|A|=2k-3$. For all $i\in[1,d]$ we have that
\begin{enumerate}
    \item if $j\in\{1,2\}$ is such that $|A\cap ([1,d]\times\{j\})|=k-1$, then
\[|A\cap ([1,i]\times\{j\})|\leq\ceil{\frac{i}{2}}\]
and
\item if $j\in\{1,2\}$ is such that $|A\cap ([1,d]\times\{j\})|=k-2$, then 
\[|A\cap ([1,i]\times\{j\})|\leq\floor{\frac{i}{2}}.\]
\end{enumerate}
\end{lemma}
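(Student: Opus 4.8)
The plan is to first strip the $d$-dependence out of the hypothesis and then reduce everything to two coupled inequalities on the column-profiles of $A$ in the two rows.

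First I would observe that $\diam(P_d\cprod P_2)=d$ (realised by the pair $(1,1),(d,2)$), so \emph{every} geodesic of $P_d\cprod P_2$ has length at most $d$; hence the assumption that $A$ is $k$-general $d$-position says simply that no geodesic contains $k$ vertices of $A$. Next I would record the shape of geodesics in a two-row grid: each is either a horizontal run inside a single row, or an ``L-shaped'' path that runs along one row up to some column $c$, takes its single vertical step there, and continues along the other row. Writing $P=\{x: (x,1)\in A\}$ and $Q=\{x:(x,2)\in A\}$, the two full-row geodesics force $\card{P},\card{Q}\le k-1$; since $\card P+\card Q=2k-3$, if both were at most $k-2$ the sum would be at most $2k-4$, so one equals $k-1$ and the other $k-2$. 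By the symmetry interchanging the two rows I may assume $\card P=k-1$ and $\card Q=k-2$, so that (1) becomes the statement about $P$ and (2) the statement about $Q$.

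The key step is to feed the L-shaped geodesics into the condition. For each $c\in[1,d]$, the L-path from $(1,1)$ to $(d,2)$ turning at column $c$ contains exactly the $A$-vertices lying in row $1$ in columns $[1,c]$ together with those in row $2$ in columns $[c,d]$, i.e.\ $\card{P\cap[1,c]}+\card{Q\cap[c,d]}$ vertices of $A$; its reflection from $(1,2)$ to $(d,1)$ turning at $c$ contains $\card{Q\cap[1,c]}+\card{P\cap[c,d]}$ vertices of $A$. As neither may reach $k$,
\[\card{P\cap[1,c]}+\card{Q\cap[c,d]}\le k-1,\qquad \card{Q\cap[1,c]}+\card{P\cap[c,d]}\le k-1.\]
Setting $f(i)=\card{P\cap[1,i]}$ and $g(i)=\card{Q\cap[1,i]}$ (with $f(0)=g(0)=0$) and using $\card P=k-1$, $\card Q=k-2$ to rewrite the complementary counts $\card{Q\cap[c,d]}=(k-2)-g(c-1)$ and $\card{P\cap[c,d]}=(k-1)-f(c-1)$, these collapse to the coupled recursion
\[f(c)\le g(c-1)+1,\qquad g(c)\le f(c-1)\qquad(1\le c\le d).\]

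Finally I would prove $f(i)\le\ceil{i/2}$ and $g(i)\le\floor{i/2}$ simultaneously by induction on $i$, the base case $i=0$ being trivial. In the inductive step the first inequality gives $f(i)\le g(i-1)+1\le\floor{(i-1)/2}+1=\ceil{i/2}$ and the second gives $g(i)\le f(i-1)\le\ceil{(i-1)/2}=\floor{i/2}$, using the elementary identities $\floor{(i-1)/2}+1=\ceil{i/2}$ and $\ceil{(i-1)/2}=\floor{i/2}$; this is exactly (1) and (2). I expect the only real content to be the correct choice of geodesics (the one-corner paths taken at \emph{every} column $c$) together with the substitution that turns the two inequalities into the recursion above; once that recursion is in hand, the parity bookkeeping in the induction is routine. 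The one boundary point worth noting is $c=1$, where the second inequality forces column $1$ to be empty in the $(k-2)$-row, consistent with $g(1)\le\floor{1/2}=0$.
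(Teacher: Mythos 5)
Your proof is correct and takes essentially the same route as the paper's: both arguments feed the two one-corner (L-shaped) geodesics turning at each column into the position condition and run a simultaneous induction on the column index, your recursion $f(c)\le g(c-1)+1$, $g(c)\le f(c-1)$ being exactly the counting the paper performs inline in its inductive step. The only difference is presentational --- you derive the geodesic inequalities up front and package them as a recursion, which also absorbs the paper's explicit base cases (the arguments that $(1,2)\notin A$ and $|A\cap([1,2]\times\{1\})|\le 1$) into the trivial initial condition $f(0)=g(0)=0$.
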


\begin{proof}
Without loss of generality, suppose $|A\cap([1,d]\times\{1\})|=k-1$ and $|A\cap([1,d]\times\{2\})|=k-2$. We prove that (1) and (2) hold for all desired values of $i$ by simultaneous induction.

Notice that $(1,2)\notin A$ because if $(1,2)\in A$ then $\{(1,2)\}\cup ([1,d]\times\{1\})$ would be the vertex set of a geodesic of $P_d\cprod P_2$ of length $d$ containing $k$ vertices of $A$, a contradiction. This implies that $|A\cap([1,1]\times\{2\})|\leq\floor{\frac{1}{2}}=0$ and $|A\cap([1,2]\times\{2\})|\leq\floor{\frac{2}{2}}=1$.

We must have $|A\cap ([1,2]\times\{1\})|\leq 1$ because otherwise, if $|A\cap ([1,2]\times\{1\})|=2$ then, since $(1,2)\notin A$ by the previous case, it follows that 
\[([1,2]\times\{1\})\cup ([2,d]\times\{2\})\]
is the vertex set of a geodesic of $P_d\cprod P_2$ containing $k$ vertices of $A$, a contradiction. This implies that $|A\cap ([1,1]\times\{1\})|\leq\ceil{\frac{1}{2}}=1$ and $|A\cap ([1,2]\times\{1\})|\leq\ceil{\frac{2}{2}}=1$.

Suppose (1) and (2) hold for $i$. Since $|A\cap ([1,i]\times\{1\})|\leq \ceil{\frac{i}{2}}$, we must have $|A\cap([1,i+1]\times\{2\})|\leq\floor{\frac{i+1}{2}}$ because otherwise, if $|A\cap([1,i+1]\times\{2\})|>\floor{\frac{i+1}{2}}$, then
there is a geodesic $g$ of $P_d\cprod P_2$ with
\[V(g)=([1,i+1]\times\{2\})\cup([i+1,d]\times\{1\})\]
such that $V(g)$ contains at least
\[k-1-\ceil{\frac{i}{2}}+\floor{\frac{i+1}{2}}+1=k\]
vertices of $A$, a contradiction. 

Similarly, since $|A\cap([1,i]\times\{2\})|\leq\floor{\frac{i}{2}}$, we must have $|A\cap([1,i+1]\times\{1\})|\leq\ceil{\frac{i+1}{2}}$ because otherwise, if $|A\cap([1,i+1]\times\{1\})|\geq\ceil{\frac{i+1}{2}}$ then there is a geodesic $g$ of $P_d\cprod P-2$ with
\[V(g)=([1,i+1]\times\{1\})\cup([i+1,d]\times\{2\})\]
such that $V(g)$ contains at least
\[k-2-\floor{\frac{i}{2}}+\ceil{\frac{i+1}{2}}+1=k\]
vertices of $A$, a contradiction.
\end{proof}

\begin{lemma}\label{lemma_not_kgdp}
Suppose $d$, $k$ and $r$ are positive integers such that $k\geq 4$, $r\geq 1$ and $d\geq 2k-3$. If $A$ is any set of vertices in $P_{d+r}\cprod P_2$ with $|A\cap ([1,d]\times \{1,2\})|\geq 2k-3$ and $|A\cap([d+1,d+r]\times\{1,2\})|\geq r+1$, then $A$ is not a $k$-general $d$-position set in $P_{d+r}\cprod P_2$.
\end{lemma}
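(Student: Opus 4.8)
The plan is to argue by contradiction: I would assume that $A$ is a $k$-general $d$-position set in $P_{d+r}\cprod P_2$ and then exhibit a geodesic of length exactly $d$ meeting $A$ in at least $k$ vertices. Throughout, write $L=[1,d]\times\{1,2\}$ and $R=[d+1,d+r]\times\{1,2\}$, and note that $L$ is a convex (hence isometric) subgraph of $P_{d+r}\cprod P_2$ isomorphic to $P_d\cprod P_2$. I will work in the regime $r<d$ relevant to the applications.

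My first step is to pin down $|A\cap L|$ precisely. By Lemma \ref{lemma_isometric_down_convex_down_and_up}(2), $A\cap L$ is a $k$-general $d$-position set in $L$. Since each set $[1,d]\times\{j\}$ is a geodesic of length $d-1\le d$, it can contain at most $k-1$ vertices of $A$; hence $|A\cap L|\le 2(k-1)$, and combined with the hypothesis $|A\cap L|\ge 2k-3$ this leaves $|A\cap L|\in\{2k-3,2k-2\}$. I would rule out $2k-2$ directly: if each row met $A$ in $k-1$ vertices, let $c$ be the least occupied column, say $(c,1)\in A$; then the path $(c,1),(c,2),(c+1,2),\dots,(d,2)$ is a geodesic of length $d-c+1\le d$ meeting $A$ in $(c,1)$ together with all $k-1$ vertices of $A$ in row $2$ (all of which lie in columns $\ge c$), a forbidden set of $k$ vertices. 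Thus $|A\cap L|=2k-3$, and since each row holds at most $k-1$ vertices, one row (WLOG row $1$) holds $k-1$ and the other holds $k-2$. Now Lemma \ref{lemma_any} applies to $A\cap L$: parts (1) and (2) give $|A\cap([1,i]\times\{1\})|\le\ceil{i/2}$ and $|A\cap([1,i]\times\{2\})|\le\floor{i/2}$ for all $i$, so at $i=r$ we obtain $|A\cap([1,r]\times\{1,2\})|\le\ceil{r/2}+\floor{r/2}=r$, whence the tail count $\ell:=|A\cap([r+1,d]\times\{1,2\})|$ satisfies $\ell\ge (2k-3)-r$.

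The decisive step is an averaging over two geodesics that turn at the \emph{leftmost} occupied column of $R$. Let $d+u$ (with $1\le u\le r$) be the least column of $R$ meeting $A$; it exists and holds at least one vertex of $A$ because $|A\cap R|\ge r+1\ge 2$. Consider the two geodesics of length $d$ spanning columns $[r+1,d+r]$ and turning at $d+u$: let $g_1$ run along row $1$ on $[r+1,d+u]$ and along row $2$ on $[d+u,d+r]$, and let $g_2$ interchange the two rows (both are genuine shortest paths of length $(d-1)+1=d$, using $r<d$ to ensure $r+1\le d+u$). Summing the two counts and applying inclusion--exclusion at the shared turning column $d+u$ gives
\[|A\cap V(g_1)|+|A\cap V(g_2)|=|A\cap([r+1,d+r]\times\{1,2\})|+|A\cap(\{d+u\}\times\{1,2\})|=\ell+|A\cap R|+|A\cap(\{d+u\}\times\{1,2\})|\ge (2k-3-r)+(r+1)+1=2k-1.\]
Consequently one of $g_1,g_2$ meets $A$ in at least $\ceil{(2k-1)/2}=k$ vertices while having length $d$, contradicting the assumption that $A$ is a $k$-general $d$-position set.

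I expect the main obstacle to be the choice of turning column. Turning at column $d$ and averaging over the two row-assignments only produces the sum $2k-2$, giving $\max=k-1$ and failing by one; the remedy is to turn at the leftmost occupied column $d+u$ of $R$, whose guaranteed vertex of $A$ contributes the extra unit through the inclusion--exclusion overlap at $d+u$. A secondary point needing care is the cardinality reduction: one must establish $|A\cap L|=2k-3$ exactly, not merely $|A\cap L|\ge 2k-3$, before invoking Lemma \ref{lemma_any}, since the prefix bound $|A\cap([1,r]\times\{1,2\})|\le r$ genuinely relies on the $(k-1,k-2)$ row split and is false for arbitrary $k$-general position subsets.
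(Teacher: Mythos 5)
Your proof is correct in the regime $r<d$ that you announce, and its decisive step is genuinely different from the paper's. The paper keeps only the weak row counts (at least $k-1$ and $k-2$ on the two rows of $[1,d]\times\{1,2\}$), pigeonholes on which row of $[d+1,d+r]\times\{1,2\}$ holds at least $\ceil{\frac{r+1}{2}}$ points of $A$, and in each case derives the contradiction from a \emph{straight} one-row geodesic with vertex set $[r,d+r]\times\{j\}$ of length $d$, invoking Lemma \ref{lemma_any} at $i=r-1$; its second case needs a parity maneuver for odd $r$ (which, as written there, contains a sign typo and a loose reduction to the first case, both repairable). You instead first pin down $|A\cap([1,d]\times\{1,2\})|=2k-3$ exactly, ruling out the $(k-1,k-1)$ split with a bent geodesic through the leftmost occupied column---a case the paper passes over silently even though Lemma \ref{lemma_any} formally requires the $(k-1,k-2)$ split---and then you replace the row-pigeonhole and parity cases by a single averaging argument over the two bent geodesics of length $d$ turning at the leftmost $A$-occupied column $d+u$ of the right block: inclusion--exclusion gives a combined count of at least $(2k-3-r)+(r+1)+1=2k-1$, so one of the two carries $k$ points of $A$. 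Your route buys a cleaner, case-free endgame and quietly repairs the two soft spots in the paper's write-up; the paper's route is shorter once its WLOG is granted. One caveat: the lemma as stated allows every $r\geq 1$, while your argument (the requirement $r+1\leq d+u$ and the use of Lemma \ref{lemma_any} at $i=r$) needs $r<d$. This is, strictly speaking, a gap relative to the statement, but it is the same implicit cap present in the paper's own proof (which applies Lemma \ref{lemma_any} at $i=r-1$), and every invocation of the lemma in the proof of Theorem \ref{theorem_gpkd_grids_part1} has $r=n\bmod d<d$, so nothing downstream is affected.
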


\begin{proof}
For the sake of contradiction, suppose $A$ is a $k$-general $d$-position set in $P_{d+r}\cprod P_2$ satisfying the stated hypotheses.

Without loss of generality suppose $|A\cap ([1,d]\times\{1\})|\geq k-1$ and $|A\cap ([1,d]\times\{2\})|\geq k-2$. It follows by the pigeonhole principle that either 
\[|A\cap ([d+1,d+r]\times\{1\})|\geq\ceil{\frac{r+1}{2}}\]
or 
\[|A\cap ([d+1,d+r]\times\{2\})|\geq\ceil{\frac{r+1}{2}}.\] 
Suppose $|A\cap ([d+1,d+r]\times\{1\})|\geq\ceil{\frac{r+1}{2}}$. Then there is a geodeisc $g$ of $P_{d+r}\cprod P_2$ with
\[V(g)=[r,d+r]\times\{1\}\]
which has length $d$. By Lemma \ref{lemma_any}, the set $[1,r-1]\times\{1\}$ contains at most $\ceil{\frac{r-1}{2}}$ vertices of $A$. Therefore, $g$ contains at least
\[k-1-\ceil{\frac{r-1}{2}}+\ceil{\frac{r+1}{2}}=k\]
vertices of $A$, a contradiction.

Now suppose $|A\cap ([d+1,d+r]\times\{2\})|\geq\ceil{\frac{r+1}{2}}$. We can assume, without loss of generality, that $r$ is even because when $r$ is odd we have $\ceil{\frac{r+1}{2}}=\floor{\frac{r+1}{2}}$, and thus the assumption $|A\cap ([d+1,d+r]\times\{1\})|\geq\ceil{\frac{r+1}{2}}$ of the previous case holds. By Lemma \ref{lemma_any}, the set $[1,r-1]\times\{2\}$ contains at most $\floor{\frac{r-1}{2}}$ vertices of $A$. Thus, there is a geodesic $g'$ of $P_{d+r}\cprod P_2$ with
\[V(g')=[r,d+r]\times\{2\}\]
such that $V(g')$ contains at least
\[k-2+\floor{\frac{r-1}{2}}+\ceil{\frac{r+1}{2}}=k\]
vertices of $A$, where the last equality holds because $r$ is even. Since $g'$ has length $d$ we obtain a contradiction.
\end{proof}

Now we obtain the values of $\gp^k_d(P_n\cprod P_2)$ for $k\geq 4$. First, let us state the values for $\gp^k_d(P_n\,\Box\, P_2)$ for $k\geq 4$ in terms of the sets $A_{k,d}$ and $B_{k,d}$ (see Definition \ref{definition_Akd_Bkd}).

\begin{theorem}\label{theorem_thin_grids_Akd_Bkd}
Suppose $d$, $k$ and $n$ are positive integers such that $k\geq 4$ and $d\geq k-1$.
\begin{enumerate}
    \item If $n\leq d$ then
    \[\gp^k_d(P_n\,\Box\, P_2)=\begin{cases}
        |B_{k,d}\cap V(P_n\,\Box\, P_2)| & \text{if $1\leq n<2k-3$}\\
        |A_{k,d}\cap V(P_n\,\Box\, P_2)| & \text{if $n\geq 2k-3$.}
    \end{cases}\]
    \item If $n\geq d+1$ and $d<2k-3$ then
    \[\gp^k_d(P_n\,\Box\, P_2)=|B_{k,d}\cap V(P_n\,\Box\, P_2)|.\]
    \item If $n\geq d+1$ and $d\geq 2k-3$ then
    \begin{align*}
        \gp^k_d(P_n\,\Box\, P_2)=\max\left(|A_{k,d}\cap V(P_n\,\Box\, P_2)|, |B_{k,d}\cap V(P_n\,\Box\, P_2)|\right)
    \end{align*}
\end{enumerate}
\end{theorem}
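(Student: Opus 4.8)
The plan is to establish matching upper and lower bounds for $\gp^k_d(P_n\cprod P_2)$ by combining the explicit constructions $A_{k,d}, B_{k,d}$ from Lemma \ref{lemma_Akd_and_Bkd} (which handle the lower bounds) with an isometric-decomposition argument (via Lemma \ref{lemma_isometric_upper_bound}) together with the structural constraints of Lemmas \ref{lemma_any} and \ref{lemma_not_kgdp} (which handle the upper bounds). For the lower bounds in all three cases, I would simply invoke Lemma \ref{lemma_Akd_and_Bkd}: whenever $d\geq 2k-3$ the set $A_{k,d}\cap V(P_n\cprod P_2)$ is $k$-general $d$-position, and whenever $d\geq k-2$ (in particular $d\geq k-1$ here) the set $B_{k,d}\cap V(P_n\cprod P_2)$ is $k$-general $d$-position, so their cardinalities are lower bounds for $\gp^k_d(P_n\cprod P_2)$. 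Case (3) takes the max of the two, so both are valid lower bounds there; in case (2), since $d<2k-3$ the set $A_{k,d}$ is undefined and only $B_{k,d}$ applies.

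The crux of the theorem is the upper bounds. First I would dispose of case (1), the ``single block'' case $n\leq d$, where there is no long geodesic except along the two horizontal lines $[1,n]\times\{j\}$. Here the problem reduces to: how many vertices can a set have so that no $k$ of them sit on a common geodesic of length $\leq d$? When $n<2k-3$ the optimal configuration should fill both rows as densely as allowed, matching $|B_{k,d}\cap V(P_n\cprod P_2)|$; when $n\geq 2k-3$ the diagonal ``zigzag'' pattern of $A_{k,d}$, which places $2k-3$ vertices while keeping at most $k-1$ on any geodesic, becomes optimal. I would argue optimality in this single-block case directly by bounding $|A\cap([1,n]\times\{1\})|$ and $|A\cap([1,n]\times\{2\})|$ and using that any $j$-monotone path together with a switch to the other row forms a geodesic. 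For case (2), where $n\geq d+1$ but $d<2k-3$, the blocks are too short for the diagonal $A$-pattern to beat the full-row $B$-pattern, so the upper bound should follow by partitioning $[1,n]$ into intervals of length $d$ (plus a remainder), applying Lemma \ref{lemma_isometric_upper_bound} with the isometric subgraphs $H_i\cong P_d\cprod P_2$, and bounding $\gp^k_d(P_d\cprod P_2)$ on each block by the case (1) value $|B_{k,d}\cap V(P_d\cprod P_2)|=2(k-2)$.

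For case (3), the hardest case, I would again decompose $P_n\cprod P_2$ into consecutive isometric blocks $H_i=(P_n\cprod P_2)[X_i]$ with $X_i=[di+1,di+d]\times\{1,2\}$ for $i<q$ and a remainder block $X_q=[dq+1,dq+r]\times\{1,2\}$, where $q=\floor{n/d}$ and $r=n\bmod d$. Lemma \ref{lemma_isometric_upper_bound} gives $\gp^k_d(P_n\cprod P_2)\leq q\cdot\gp^k_d(P_d\cprod P_2)+\gp^k_d(P_r\cprod P_2)$, and case (1) evaluates each factor. The subtlety — and the main obstacle — is that this naive block bound can overcount across the boundary between the last full block and the remainder block: a geodesic of length $d$ can straddle two adjacent blocks, so a set achieving the per-block maxima independently might fail to be globally $k$-general $d$-position, yet the isometric-cover lemma only gives an inequality, not the sharp value. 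This is precisely where Lemma \ref{lemma_not_kgdp} enters: it shows that if one block carries the full $2k-3$ vertices (the $A$-configuration) then the adjacent remainder block of length $r$ can carry at most $r$ vertices, not $r+1$. So I expect the careful step to be a case analysis on whether an optimal set places $2k-3$ or fewer vertices in the penultimate block, using Lemmas \ref{lemma_any} and \ref{lemma_not_kgdp} to control the interaction at the last boundary, and then verifying that the resulting bound coincides exactly with $\max(|A_{k,d}\cap V(P_n\cprod P_2)|,|B_{k,d}\cap V(P_n\cprod P_2)|)$ — the max arising precisely because whichever of the two patterns packs more vertices into the final partial block wins.
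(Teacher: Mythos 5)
Your proposal follows essentially the same route as the paper's proof: lower bounds from Lemma \ref{lemma_Akd_and_Bkd}, a direct row-switching argument for the single-block case $n\leq d$, the isometric-cover bound of Lemma \ref{lemma_isometric_upper_bound} for case (2), and a block decomposition in case (3) in which Lemmas \ref{lemma_any} and \ref{lemma_not_kgdp} rule out a full $(2k-3)$-block sitting next to an overloaded remainder block. The only cosmetic difference is that the paper first converts the statement into the explicit numerical formulas of Theorem \ref{theorem_gpkd_grids_part1} and runs your case analysis there, with the sub-cases organized by comparing $n\bmod d$ against $k-2$ and $\floor{\frac{n}{d}}$ rather than by how many vertices land in the penultimate block.
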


By using the formulas for the cardinalities of $A_{k,d}\cap V(P_n\cprod P_2)$ and $A_{k,d}\cap V(P_n\cprod P_2)$, and by considering cases, it is easy to see that Theorem \ref{theorem_thin_grids_Akd_Bkd} is equivalent to the next theorem; we restate Theorem \ref{theorem_thin_grids_Akd_Bkd} in this way because this will facilitate our proof.

\begin{theorem}\label{theorem_gpkd_grids_part1}
Suppose $d$, $k$ and $n$ are positive integers such that $k\geq 4$ and $d\geq k-1$. 
\begin{enumerate}
\item\label{item_short_grids} If $n\leq d$ then
\begin{align*}\gp^k_d(P_n\cprod P_2)=\begin{cases}
    2\min(n,k-2) & \text{if $1\leq n\leq 2k-4$}\\
    2k-3 & \text{if $n\geq 2k-3$}\\
\end{cases}.\end{align*}
\item If $n\geq d+1$ and $d<2k-3$ then
\[\gp^k_d(P_n\cprod P_2)=(2k-4)\floor{\frac{n}{d}}+2\min(n\bmod d,k-2).\]
\item Suppose $n\geq d+1$ and $d\geq 2k-3$.
\begin{enumerate}
    \item Suppose $n\bmod d\leq k-2$.
    \begin{enumerate}
        \item If $n\bmod d \leq \floor{\frac{n}{d}}$ then \[\gp^k_d(P_n\cprod P_2)=(2k-3)\floor{\frac{n}{d}}+n\bmod d.\]
        \item If $n\bmod d>\floor{\frac{n}{d}}$ then \[\gp^k_d(P_n\cprod P_2)=(2k-4)\floor{\frac{n}{d}}+2(n\bmod d).\]
    \end{enumerate}
    \item Suppose $k-2 < n\bmod d <2k-3$.
    \begin{enumerate}
        \item If $2k-4- n\bmod d\leq \floor{\frac{n}{d}}$ then \[\gp^k_d(P_n\cprod P_2)=(2k-3)\floor{\frac{n}{d}}+n\bmod d.\]
        \item If $2k-4-n\bmod d>\floor{\frac{n}{d}}$ then \[\gp^k_d(P_n\cprod P_2)=(2k-4)\left(\floor{\frac{n}{d}}+1\right).\]
    \end{enumerate}
    \item If $n\bmod d\geq 2k-3$ then 
    \[\gp^k_d(P_n\cprod P_2)=(2k-3)\left(\floor{\frac{n}{d}}+1\right).\]
\end{enumerate}
\end{enumerate}
\end{theorem}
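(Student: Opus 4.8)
The plan is to prove the two inequalities $\gp^k_d(P_n\cprod P_2)\ge(\text{claimed value})$ and $\gp^k_d(P_n\cprod P_2)\le(\text{claimed value})$ separately, reading off the subcase conditions of the statement as the elementary arithmetic comparisons that decide which of $\card{A_{k,d}\cap V(P_n\cprod P_2)}$ and $\card{B_{k,d}\cap V(P_n\cprod P_2)}$ is larger. The lower bounds require no new work: by Lemma \ref{lemma_Akd_and_Bkd} both $A_{k,d}\cap V(P_n\cprod P_2)$ and $B_{k,d}\cap V(P_n\cprod P_2)$ are $k$-general $d$-position sets of the stated cardinalities whenever they are defined, so $\gp^k_d(P_n\cprod P_2)$ dominates each and hence their maximum. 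Comparing the two cardinality formulas $(2k-3)\floor{n/d}+\min(n\bmod d,2k-3)$ and $2(k-2)\floor{n/d}+2\min(n\bmod d,k-2)$ in the various regimes of $n\bmod d$ versus $\floor{n/d}$ produces exactly the case split written in the statement.

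The bulk of the work is the matching upper bound, which I would organize around the isometric block decomposition. First I would settle the base case $n\le d$ (item (1)). Since $n\le d$, every geodesic of $P_n\cprod P_2$ has length at most $d$, so being $k$-general $d$-position is the same as having no geodesic carrying $k$ vertices; when $n\le k-2$ no geodesic is even long enough to carry $k$ vertices and all $2n$ vertices may be taken, while for $n\ge k-1$ each full row is an isometric copy of $P_n$ and carries at most $k-1$ vertices. The sharper bounds $2k-4$ (for $k-1\le n\le 2k-4$) and $2k-3$ (for $n\ge 2k-3$) come from ruling out configurations that are too full in both rows: a single bent geodesic running along row $1$ and then crossing to row $2$ catches a prefix of the row-$1$ vertices and a suffix of the row-$2$ vertices, and optimizing the crossing column yields $k$ vertices on one geodesic. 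This is precisely the staircase prefix-counting estimate isolated in Lemma \ref{lemma_any}, and I would establish the base case by the same simultaneous induction on prefix length.

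With the base case in hand, for $n\ge d+1$ I would write $n=qd+r$ with $q=\floor{n/d}$ and $0\le r<d$, and cover $P_n\cprod P_2$ by $q$ isometric blocks $\cong P_d\cprod P_2$ together with a remainder block $\cong P_r\cprod P_2$. Lemma \ref{lemma_isometric_upper_bound} together with the base case immediately gives the answer in the easy regimes: when $d<2k-3$ each full block carries at most $2(k-2)$ and the cover bound equals the $B$-value (item (2)); when $r\ge 2k-3$ the remainder also carries $2k-3$ and the cover bound equals $(2k-3)(q+1)$ (case 3(c)).

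The genuine obstacle is case 3 with $r<2k-3$ (subcases (a) and (b)), where the naive cover bound $\sum_i\gp^k_d(H_i)$ is \emph{not} tight: it would permit every full block to be a heavy staircase of $2k-3$ vertices while the tail simultaneously uses the denser $B$-pattern, and one must show this cannot occur. The mechanism is Lemma \ref{lemma_not_kgdp}: a block holding the maximal $2k-3$ vertices forces any adjacent run of $r'$ columns to hold at most $r'$ vertices, since a bent geodesic of length $d$ straddling the block boundary would otherwise collect $k$ vertices (exactly the geodesics built in the proofs of Lemmas \ref{lemma_any} and \ref{lemma_not_kgdp}). I would therefore replace the block-by-block cover estimate by an argument that accounts for these boundary interactions, either a global prefix-counting inequality over all of $[1,n]$ extending Lemma \ref{lemma_any}, or an induction on $q$ in which the heaviness of one block is propagated as a sparsity constraint on its neighbour. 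The simple dichotomy ``the last full block is heavy, forcing the remainder down to $r$ and the $A$-value $(2k-3)q+r$, or it is light, allowing the tail to reach the $B$-value'' must be strengthened so that a chain of heavy blocks cannot coexist with a heavy tail; I expect this bookkeeping, rather than any single geodesic construction, to be the crux, with the final split in (3)(a),(b) falling out once the two extremal patterns $A$ and $B$ are shown to dominate every mixed configuration.
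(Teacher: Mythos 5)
Your lower bounds, your treatment of part (1), and your handling of part (2) and case 3(c) all track the paper's proof: the paper likewise obtains the lower bounds from Lemma \ref{lemma_Akd_and_Bkd} (and the case split in (3) is indeed nothing more than the arithmetic comparison of the two cardinalities), proves part (1) via bent geodesics catching a prefix of one row and a suffix of the other (by direct pigeonhole arguments rather than your prefix-counting induction, but the mechanism is the same one underlying Lemma \ref{lemma_any}), and settles part (2) and case 3(c) by the cover bound of Lemma \ref{lemma_isometric_upper_bound} applied to $d$-column blocks plus an $r$-column tail.

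The genuine gap is that you never actually prove the upper bound in cases 3(a) and 3(b), which you yourself identify as the crux: you offer two candidate strategies (``a global prefix-counting inequality extending Lemma \ref{lemma_any}'' or ``an induction on $q$ in which heaviness of one block is propagated as a sparsity constraint on its neighbour'') and state that you \emph{expect} this bookkeeping to be the heart of the matter, but neither strategy is carried out, so the hardest part of the theorem is left as a plan. For comparison, the paper closes these cases with a short, concrete argument: writing $q=\floor{\frac{n}{d}}$ and $r=n\bmod d$, and supposing $|A|$ exceeds the claimed value by one, convexity of the blocks $X_i$ (Lemma \ref{lemma_isometric_down_convex_down_and_up}) together with part (1) forces the $r$-column tail to hold at least $r+1$ points of $A$; then Lemma \ref{lemma_not_kgdp} is applied to $A$ restricted to the single convex window consisting of the last full block together with the tail, yielding the contradiction (subcases 3(a)(ii) and 3(b)(ii) are reduced to the same window argument after a slightly different count). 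So the paper's route is one application of Lemma \ref{lemma_not_kgdp} to a $(d+r)$-column window, not a global induction over blocks. I will grant that your worry about the ``simple dichotomy'' is not idle---the one-window application needs the last full block to carry $2k-3$ points of $A$, which takes more counting than the paper displays, since a light last block could in principle sit between heavy earlier blocks and a heavy tail---but flagging that difficulty is not the same as resolving it: as submitted, your proposal establishes the theorem only in the regimes where the naive cover bound is already tight, and leaves exactly cases 3(a) and 3(b) unproved.
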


\begin{proof}

For (1), suppose $n\leq d$. To prove the formula in (1), let us consider a few cases. The largest cardinality of a vertex set of a geodesic in $P_n\cprod P_2$ is $n+1$, so if $1\leq n\leq k-2$, there are no geodesics of $P_n\cprod P_2$ that contain $k$ vertices of $V(P_n \cprod P_2)$, which implies that $A_n=V(P_n\cprod P_2)$ is a $k$-general $d$-position set in $P_n\cprod P_2$ with $|A_n|=2n$. Therefore, when $1\leq n\leq k-2$ we have $\gp^k_d(P_n\cprod P_2)=2n$.

Suppose $k-1\leq n\leq 2k-4$. By Lemma \ref{lemma_Akd_and_Bkd}, $B_{k,d}\cap V(P_n\cprod P_2)$ is a $k$-general $d$-position set in $P_n\cprod P_2$ with cardinality
\[|B_{k,d}\cap V(P_n\cprod P_2)|=2(k-2)\floor{\frac{n}{d}}+2\min(n\bmod d,k-2)=2(k-2)\]
(where the last equality can be seen to hold by considering cases in which $n<d$ and $n=d$). Thus we have $\gp^k(P_n\cprod P_2)\geq 2k-4$. To show that $\gp^k(P_n\cprod P_2)\leq 2k-4$, assume for the sake contradiction that $A$ is a $k$-general $d$-position set of vertices in $P_n\cprod P_2$ with $|A|>2k-4$. Since $n\leq 2(k-2)$, it follows by the pigeonhole principle that we may fix an $i\in[1,n]$ such that $|A\cap (\{i\}\times\{1,2\})|=2$. There are exactly two geodesics, $g_1$ and $g_2$ of $P_n\cprod P_2$ whose vertex sets contain $\{i\}\times\{1,2\}$ and that have maximal length $\lambda(g_1)=\lambda(g_2)=n$. Since $|A\setminus(\{i\}\times\{1,2\})|>2k-6$, and since all of the vertices in $A\setminus(\{i\}\times\{1,2\})$ must live in $g_1$ or $g_2$, it follows by the pigeonhole principle that for some $\ell\in\{1,2\}$ we have 
\[|(A\setminus(\{i\}\times\{1,2\}))\cap V(g_\ell)|\geq k-2\]
and hence 
\[|A\cap V(g_\ell)|\geq k,\]
where $\lambda(g_\ell)=n\leq d$, a contradiction.

To finish (1), suppose $2k-3\leq n\leq d$. We must show that $\gp^k_d(P_n\cprod P_2)=2k-3$. By Lemma \ref{lemma_Akd_and_Bkd}, $A_{k,d}\cap V(P_n\cprod P_2)$ is a $k$-general $d$-positions set in $P_n\cprod P_2$ with cardinality
\[|A_{k,d}\cap V(P_n\cprod P_2)|=(2k-3)\floor{\frac{n}{d}}+\min(n\bmod d,2k-3)=2k-3\]
(where the last inequality can be seen to hold by considering cases in which $n<d$ and $n=d$).
Thus, it follows that $\gp^k(P_n\cprod P_2)\geq 2k-3$. To show that $\gp^k(P_n\cprod P_2)\leq 2k-3$, assume for the sake of contradiction that $A$ is a $k$-general $d$-position set of vertices in $P_n\cprod P_2$ with $|A|>2k-3$. Without loss of generality we can assume that $|A|=2(k-1)$. If $|A\cap ([1,n]\times\{j\})|\geq k$ for some $j\in\{1,2\}$, then $[1,n]\times\{j\}$ is the vertex set of a geodesic containing at least $k$ elements of $A$ that has length $n-1<d$, a contradiction. Thus, we can assume that
\[|A\cap([1,n]\times\{j\})|=k-1\] 
for $j\in\{1,2\}$. Let $i_0$ be the least $i\in[1,n]$ such that $A\cap (\{i\}\times\{1,2\})\neq\emptyset$. Without loss of generality, suppose $(i_0,1)\in A$. Then it follows that there is a geodeisc $g$ with 
\[V(g)=\{(i_0,1),(i_0,2)\}\cup\{(i,2)\st i_0\leq i\leq n\},\]
which contains $k$ members of $A$ and has length at most $n\leq d$, a contradiction.

For (2), suppose $n\geq d+1$ and $d<2k-3$. By Lemma \ref{lemma_Akd_and_Bkd}, $B_{k,d}\cap V(P_n\cprod P_2)$ is a $k$-general $d$-position set in $P_n\cprod P_2$ with cardinality
\[|B_{k,d}\cap V(P_n\cprod P_2)|=2(k-2)\floor{\frac{n}{2}}+2\min(n\bmod d,k-2)\]
and thus
\[\gp^k_d(P_n\cprod P_2)\geq 2(k-2)\floor{\frac{n}{2}}+2\min(n\bmod d,k-2).\]
Let us show that 
\[\gp^k_d(P_n\cprod P_2)\leq 2(k-2)\floor{\frac{n}{2}}+2\min(n\bmod d,k-2).\]
Let $q=\floor{\frac{n}{d}}$ and let $r= n\bmod d$ be the unique integer such that $n=dq+r$ where $0\leq r<d$.
For each $i\in\{0,\ldots,q-1\}$ let $X_i=[di+1,di+d]\times\{1,2\}$. If $r=0$ let $X_i=\emptyset$ and if $r>0$ let $X_i=[dq+1,n]\times\{1,2\}$. For $i\in\{0,\ldots,q\}$ let $H_i=(P_n\cprod P_2)[X_i]$ be the subgraph induced by $X_i$ and note that $H_i$ is an isometric subgraph of $P_n\cprod P_2$. If $0\leq i\leq q-1$ then $H_i\cong P_d\cprod P_2$ and $H_q\cong P_r$ (where $P_0$ is the empty graph). By (\ref{item_short_grids}) (which we already proved above), it follows from $k-1\leq d<2k-3$ that $\gp^k_d(P_d\cprod P_2)= 2(k-2)$. Also by (\ref{item_short_grids}), since $0\leq r<d<2k-3$, we see that $\gp^k_d(P_r\cprod P_2)=2\min(r,k-2)$. Therefore, by Lemma \ref{lemma_isometric_upper_bound} we obtain
\begin{align*}
    \gp^k_d(P_n\cprod P_2)&\leq \sum_{i=0}^q\gp^k_d(H_i)\\
    &=\gp^k_d(P_d\cprod P_2)q+\gp^k_d(P_r\cprod P_2)\\
    &=2(k-2)\floor{\frac{n}{d}}+2\min(n\bmod d,k-2).
\end{align*}

Let us prove (3). Suppose $n\geq d+1$ and $d\geq 2k-3$. By Lemma \ref{lemma_Akd_and_Bkd}, both $A_{k,d}\cap V(P_n\cprod P_s)$ and $B_{k,d}\cap V(P_n\cprod P_2)$ are $k$-general $d$-position sets and thus, given the cardinalities stated in Lemma \ref{lemma_Akd_and_Bkd}, we obtain
\begin{align*}
    \gp^k_d(P_n\cprod P_2)\geq\max(2(k-2)&\floor{\frac{n}{d}} + 2\min(n\bmod d,k-2), \\
         & (2k-3)\floor{\frac{n}{d}}+\min(n\bmod d,2k-3)).
\end{align*}
This implies that, in each sub-case of (3), $\gp^k_d(P_n\cprod P_r)$ is greater than or equal to the right-hand-side of the stated equation.

Let us prove the reverse inequality in each case of (3). For notational simplicity, let $q=\floor{\frac{n}{d}}$ and $r=n\bmod d$. 

Let us now finish the proof of (3ai) and (3bi) with a single argument. Suppose $r<2k-3$. We must prove that
\[\gp^k_d(P_n\cprod P_2)\leq (2k-3)q+r.\]
For the sake of contradiction, suppose $A$ is a $k$-general $d$-position set in $P_n\cprod P_2$ such that $|A|>(2k-3)q+r$. Without loss of generality we can assume that $|A|=(2k-3)q+r+1$. By Lemma 
\ref{lemma_isometric_down_convex_down_and_up} and (1), it follows that $|A\cap X_i|\leq 2k-3$ for $i\in\{0,\ldots,q-1\}$ and thus $|A\cap X_q|=r+1$. Lemma \ref{lemma_isometric_down_convex_down_and_up} implies that $A\cap ([dq-d+1,dq+r]\times\{1,2\})$ is a $k$-general $d$-position set in the induced subgraph
\[(P_n\cprod P_2)[[dq-d+1,dq+r]\times\{1,2\}].\]
But this contradicts Lemma \ref{lemma_not_kgdp}.

Let us finish the proof of (3aii). Suppose $r\leq k-2$ and $r>q$. For the sake of contradiction, suppose $A$ is a $k$-general $d$-position set in $P_n\cprod P_2$ with $|A|>(2k-4)q+2r$. Without loss of generality, suppose that $|A|=(2k-4)q+2r+1$. It follows from (1) and the fact that for $i\in\{0,\ldots,q-1\}$, $A\cap X_i$ is a $k$-general $d$-position set in $H_i$, that
\[|A\cap ([dq+1,dq+r]\times\{1,2\})|\geq |A|-(2k-3)q=2r+1-q\geq r+2.\]
The rest of the argument is the same as that of (3ai) and (3bi).

Let us finish the proof of (3bii). Suppose $2k-4-r>q$. For the sake of contradiction, suppose $A$ is a $k$-general $d$-position set in $P_n\cprod P_2$ with $|A|=(2k-4)(q+1)+1=(2k-4)q+2k-3$. It follows from (1) and the fact that for $i\in\{0,\ldots,q-1\}$, $A\cap X_i$ is a $k$-general $d$-position set in $H_i$, that
$|A\cap X_i|\leq 2k-3$ for $i\in\{0,\ldots,q-1\}$
and thus
\[|A\cap ([dq+1,dq+r]\times\{1,2\})|\geq |A|-(2k-3)q=-q+2k-3\geq r+1.\]
The rest of the argument is the same as that of (3ai), (3bi) and (3aii).

Finally, what remains to prove of (3c) is a straightforward consequence of Lemma \ref{lemma_isometric_upper_bound}.
\end{proof}

\begin{remark}\label{remark_Moore_identity}
Let us make note of a curious identity that hints at some connection between the two-dimensional $k$-general $d$-position subsets of $P_n\cprod P_2$ on the one hand and the one-dimensional $2k-2$-general $d-1$-position, as well as $k-1$-general $d-1$-position subsets of $P_n$ on the other. By inspecting the formulas above, it is apparent that when $n\geq d+1$ and $d\geq 2k-3$ we have 
\[\gp^k_d(P_n\cprod P_2)=\max(\gp^{2k-2}_{d-1}(P_n),2\gp^{k-1}_{d-1}(P_n)).\]
Is there a way to use the formula for $\gp^k_d(P_n)$ (Theorem \ref{theorem_paths}) to prove the formulas for $\gp^k_d(P_n\cprod P_2)$ in Theorem \ref{theorem_thin_grids_Akd_Bkd} and Theorem \ref{theorem_gpkd_grids_part1}? If so, this might provide a strategy for proving formulas for $\gp^k_d(P_n\cprod P_3)$.
\end{remark}

\section{Open problems}\label{section_questions}

Many questions regarding $k$-general $d$-position subsets of graphs remain open. Let us provide some background information in order to motivate our first list of open problems. In Section \ref{section_thin_grids}, we found a formula for $\gp^k_d(P_n\cprod P_2)$. The general position number of toroidal graphs $\gp(C_n\cprod C_m)$ has been studied in \cite{MR4265041} and \cite{MR4585177}. Klavzar et. al. \cite{MR4265041} provided information on the general position number $\gp(P_n\cprod C_m)$ of cylinders. The general position number of the $n$-dimensional infinite grid $\P^{\Box,n}_\infty=P_\infty\cprod\cdots\cprod P_\infty$ has been studied in \cite{MR4154901}.

\begin{problems}
Find a formulas for the $k$-general $d$-position numbers of the following graphs in terms of the relevant parameters.
\begin{enumerate}
\item $P_n\cprod P_m$ for $m>2$ (2-dimensional finite grids, see Remark \ref{remark_Moore_identity})
\item $P_{n_1}\cprod \cdots\cprod P_{n_\ell}$ ($\ell$-dimensional finite grids) 
\item $C_n\cprod C_m$ (toroidal graphs)
\item $C_{n_1}\cprod \cdots\cprod C_{n_\ell}$ ($\ell$-dimensional toroidal graphs)
\item $P_n\cprod C_m$ (cylinders)
\item $P^{\Box,n}_\infty$ ($n$-dimensional infinite grid)
\end{enumerate}
\end{problems}

Considering Corollary \ref{corollary_infinite_grid} and the results of \cite{MR4154901}, we make the following conjecture.

\begin{conjecture}
$\gp^k(P^{\Box,n}_\infty)=(k-1)^{2^{n-1}}$.
\end{conjecture}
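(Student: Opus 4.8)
The plan is to reduce the computation to a purely order-theoretic chain problem and then treat the two inequalities separately. Identify $V(P_\infty^{\Box,n})$ with $\Z^n$ via the natural labeling $f$. Since $d(u,v)=\sum_{i=1}^n|u_i-v_i|$, a set of vertices lies on a common geodesic precisely when it is \emph{monotone}: there is an enumeration along which every coordinate is weakly monotone. Writing, for each sign vector $\epsilon\in\{+,-\}^n$, $u\preceq_\epsilon v$ iff $\epsilon_i(v_i-u_i)\ge 0$ for all $i$, and noting that $\epsilon$ and $-\epsilon$ induce the same comparabilities, there are exactly $2^{n-1}$ distinct such partial orders, and a set is monotone iff it is a chain in one of them. (This is the $n$-dimensional form of Klav\v{z}ar and Manuel's monotone-geodesic labeling; for $n=2$ it is exactly the content used in Lemma \ref{lemma_k_mon_geodesic}.) Hence $\gp^k(P_\infty^{\Box,n})$ equals the largest size of a set $S\subseteq\Z^n$ containing no chain of $k$ elements in any of the $2^{n-1}$ orthant orders $\preceq_\epsilon$.

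For the upper bound I would run the Erd\H{o}s--Szekeres height argument simultaneously in all $2^{n-1}$ orders. Fix a $k$-general position set $S$ and a canonical representative $\epsilon$ (say with $\epsilon_1=+$) for each of the $2^{n-1}$ comparability classes. For $p\in S$ let $h_\epsilon(p)$ be the maximum number of vertices in a $\preceq_\epsilon$-chain of $S$ with largest element $p$; the absence of a $k$-chain forces $h_\epsilon(p)\in\{1,\dots,k-1\}$. I claim $p\mapsto(h_\epsilon(p))_\epsilon\in\{1,\dots,k-1\}^{2^{n-1}}$ is injective: given $p\neq q$, set $\delta_i=\operatorname{sign}(q_i-p_i)$ on the coordinates where $p,q$ differ and $\delta_i=+$ elsewhere, so that $p\prec_\delta q$ strictly; passing to the canonical representative $\epsilon\in\{\delta,-\delta\}$, strict comparability yields $h_\epsilon(p)\neq h_\epsilon(q)$. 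Injectivity gives $|S|\le (k-1)^{2^{n-1}}$.

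For the lower bound I would build a set of size $(k-1)^{2^{n-1}}$ by induction on $n$, generalizing the extremal Erd\H{o}s--Szekeres permutation. The cases $n=1$ ($k-1$ collinear points) and $n=2$ (Corollary \ref{corollary_infinite_grid}, realized by the length-$(k-1)^2$ permutation built from $k-1$ internally decreasing blocks arranged increasingly) are known. For the step, suppose $T\subseteq\Z^{n-1}$ is extremal, with $|T|=(k-1)^{2^{n-2}}$ and no $k$-chain in any of its $2^{n-2}$ orders. Since $2\cdot 2^{n-2}=2^{n-1}$ and $|T|^2=(k-1)^{2^{n-1}}$, I would place $|T|$ scaled copies of $T$ along the new coordinate: for a large modulus $L$ and $p,q\in T$, define a point whose first $n-1$ coordinates equal $L\,p+q$ (so that $p$ indexes a coarse block and $q$ a fine offset) and whose last coordinate replicates the coarse order of $p$ while \emph{reversing} the fine order of $q$. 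As in the $n=2$ blocks, the sign reversal makes each old order $\bar\epsilon$ split into two new orders $(\bar\epsilon,+)$ and $(\bar\epsilon,-)$, governed respectively by the coarse data $p$ and the fine data $q$, so that a $k$-chain in any new order would project to a $k$-chain of $T$.

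The main obstacle is verifying the chain bounds for this recursive construction. The difficulty is the interference between the coarse and fine scales: a chain in a new order must be simultaneously monotone in $L\,p+q$ and in the sign-twisted last coordinate, and one has to rule out combining these constraints so as to run through the fine structure inside a block and then continue across blocks, which would create chains of length up to $(k-1)^2$. Choosing $L$ and the last-coordinate functional so that comparability becomes genuinely lexicographic, and so that the induced pairing of the $2^{n-1}$ orders with the $2^{n-1}$ factors of $\{1,\dots,k-1\}^{2^{n-1}}$ is a bijection, is the crux. An appealing alternative that may streamline the verification is to look for a single linear map $\Lambda\colon\Z^{2^{n-1}}\to\Z^n$ whose restriction to the box $\{0,\dots,k-2\}^{2^{n-1}}$ is injective and pulls each orthant order $\preceq_\epsilon$ back to a single-coordinate order; by cone duality this reduces to choosing $n$ normal vectors so that each of the $2^n$ orthant cones they determine lies in a coordinate halfspace, with the resulting assignment a bijection onto the $2^n$ signed coordinates.
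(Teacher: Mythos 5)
This statement is an open conjecture in the paper, so there is no proof here to compare yours against; your argument can only be assessed on its own terms. Your reduction is correct: in $P^{\Box,n}_\infty$ a set of vertices lies on a common geodesic exactly when some enumeration makes every coordinate weakly monotone, i.e.\ when it is a chain in one of the $2^{n-1}$ orthant orders $\preceq_\epsilon$ (normalized so that $\epsilon_1=+$). Your upper bound is also correct and complete: any two distinct points of $\Z^n$ are strictly comparable in at least one normalized order, so on a $k$-general position set $S$ the height vector $p\mapsto (h_\epsilon(p))_\epsilon\in\{1,\ldots,k-1\}^{2^{n-1}}$ is injective, giving $|S|\le (k-1)^{2^{n-1}}$. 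This is the natural $n$-dimensional extension of the Erd\H{o}s--Szekeres argument of Section \ref{section_infinite_2d_grids}, and that half of the conjecture can be regarded as settled by your argument.

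The genuine gap is the lower bound, as you yourself acknowledge. ``A last coordinate that replicates the coarse order of $p$ while reversing the fine order of $q$'' is not yet a construction: $p$ ranges over a set carrying $2^{n-2}$ distinct partial orders, and a single integer coordinate cannot replicate them all, so the danger you identify---mixed coarse/fine chains of length up to $(k-1)^2$---is never ruled out. The missing idea is to exploit the one feature all normalized orders share, namely $\epsilon_1=+$. Maintain as an inductive invariant that the extremal set $T\subseteq\Z^{n-1}$ has pairwise distinct coordinates in every axis; then strict comparability of two points of $T$ in any normalized order $\bar\epsilon$ forces their first coordinates to increase strictly. Consequently, if the new points are defined as $\left(Lp+q,\ Mp_1-q_1\right)$ for $p,q\in T$, with $L$ and $M$ exceeding the coordinate spread of $T$, then every cross-block comparability ($p\prec_{\bar\epsilon}p'$ strictly, any $q,q'$) has strictly increasing last coordinate, while every within-block comparability ($p=p'$ and $q\prec_{\bar\epsilon}q'$) has strictly decreasing last coordinate. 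Hence in $(\bar\epsilon,+)$ the only comparable pairs are cross-block pairs with $\bar\epsilon$-comparable blocks, in $(\bar\epsilon,-)$ the only comparable pairs are within-block pairs with $\bar\epsilon$-comparable fine parts, no order admits both kinds of step, and every chain maps injectively to a $\bar\epsilon$-chain of $T$, so it has at most $k-1$ elements; one also checks easily that the distinct-coordinate invariant propagates. Writing out this verification would close the induction and, combined with your upper bound, prove the conjecture; as submitted, however, your proposal establishes only the inequality $\gp^k(P^{\Box,n}_\infty)\le (k-1)^{2^{n-1}}$.
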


Klav\v{z}ar, Rall and Yero \cite{MR4341189} proved the following structural characterization of general $d$-position sets. Recall that subgraphs $H_1$ and $H_2$ of a graph $G$ are \emph{parallel}, denoted by $H_1\parallel H_2$, if for every pair of vertices $u_1\in V(H_1)$ and $u_2\in V(H_2)$ we have $d_G(u_1,u_2)=d_G(H_1,H_2)$.

\begin{theorem}[{\cite{MR4341189}}]\label{theorem_characterization}
Let $G$ be a connected graph and let $d\geq 2$ be an integer. Then $S\subseteq V(G)$ is a general $d$-position set if and only if the following conditions hold:
\begin{enumerate}
\item The induced subgraph $G[S]$ is a disjoint union of complete graphs $Q_1,\ldots,Q_\ell$.
\item If $Q_i\nparallel Q_j$ where $i\neq j$, then $d_G(Q_i,Q_j)\geq d$.
\item If $d_G(Q_i,Q_j)+d_G(Q_j,Q_k)=d_G(Q_i,Q_k)$ for $\{i,j,k\}\subseteq\{1,\ldots,\ell\}$ with $|\{i,j,k\}|=3$, then $d_G(Q_i,Q_k)>d$.
\end{enumerate}
\end{theorem}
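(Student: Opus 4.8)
The plan is to prove both implications by exploiting a single engine: the observation that if two of the cliques are close, they must be \emph{parallel}. I would first dispose of condition (1). Assuming $S$ is a general $d$-position set, if $G[S]$ contained an induced path $uvw$ (with $uv,vw\in E(G)$ but $uw\notin E(G)$), then $d_G(u,w)=2$ and $uvw$ is a geodesic of length $2\le d$ meeting $S$ in three vertices, a contradiction; hence $G[S]$ is $P_3$-free, i.e.\ a disjoint union of cliques $Q_1,\dots,Q_\ell$. Note also that three vertices of a single $Q_i$ can never lie on a common geodesic, since any two of them are at distance $1$.

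The crucial lemma, which yields condition (2) by contraposition, is: \emph{if $d_G(Q_i,Q_j)<d$ then $Q_i\parallel Q_j$.} To prove it, set $\delta=d_G(Q_i,Q_j)$ and fix a realizing pair $a\in Q_i$, $b\in Q_j$ with $d_G(a,b)=\delta$. For any $a'\in Q_i$ we have $\delta\le d_G(a',b)\le 1+\delta$ (using that $Q_i$ is a clique); were $d_G(a',b)=\delta+1$, prepending the edge $a'a$ to a geodesic from $a$ to $b$ would give a geodesic through $a',a,b$ of length $\delta+1\le d$, contradicting the general $d$-position property. Thus $d_G(a',b)=\delta$ for all $a'\in Q_i$, and symmetrically across $Q_j$; iterating once more shows every cross pair is at distance $\delta$, i.e.\ $Q_i\parallel Q_j$. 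For condition (3), I would assume $d_G(Q_i,Q_j)+d_G(Q_j,Q_k)=d_G(Q_i,Q_k)=:\delta_{ik}$ and suppose toward a contradiction that $\delta_{ik}\le d$. Since each summand is $\ge 1$, both $d_G(Q_i,Q_j)$ and $d_G(Q_j,Q_k)$ are $<d$, so the lemma gives $Q_i\parallel Q_j$ and $Q_j\parallel Q_k$. Picking $a\in Q_i$, $c\in Q_k$ realizing $\delta_{ik}$ and any $b\in Q_j$, parallelism forces $d_G(a,b)+d_G(b,c)=\delta_{ik}=d_G(a,c)$, so $b$ lies on a geodesic from $a$ to $c$ of length $\le d$ through three vertices of $S$, the desired contradiction.

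For the converse I would argue contrapositively: given (1)--(3), suppose three vertices of $S$ lie on a common geodesic of length $\le d$, and order them as $p,q,r$ so that $d_G(p,q)+d_G(q,r)=d_G(p,r)\le d$ with both summands $\ge 1$. First I would show the three points lie in three \emph{distinct} cliques: if two of them shared a clique, the relevant cross-distances would differ by exactly $1$, yet the sub-threshold distance (being $<d$) would force the two cliques parallel via the lemma, making those distances equal---a contradiction (the all-in-one-clique and middle-vertex-in-a-different-clique subcases are ruled out directly by $d_G=1$ inside a clique). With $p\in Q_i$, $q\in Q_j$, $r\in Q_k$ distinct, $d_G(p,q),d_G(q,r)<d$ give $Q_i\parallel Q_j$ and $Q_j\parallel Q_k$, whence $d_G(p,q)=d_G(Q_i,Q_j)$ and $d_G(q,r)=d_G(Q_j,Q_k)$; a short squeeze (using parallelism of $Q_i,Q_k$ when $\delta_{ik}<d$, or the length bound $\delta_{ik}\le d_G(p,r)\le d$ when $\delta_{ik}=d$) yields $d_G(Q_i,Q_j)+d_G(Q_j,Q_k)=d_G(Q_i,Q_k)$. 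Condition (3) then forces $d_G(Q_i,Q_k)>d$, contradicting $d_G(p,r)\le d$.

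The main obstacle is the bookkeeping around cliques: because a general $d$-position set may contain arbitrarily large cliques, every distance argument carries an unavoidable off-by-one coming from the within-clique edges, and one must check that the ``three on a geodesic'' witness really has length $\le d$. Concentrating this difficulty into the single parallelism lemma (sub-threshold distance $\Rightarrow$ parallel) is what makes the rest routine; getting that lemma's propagation across an entire clique-pair exactly right, and the distinct-cliques reduction in the converse, are where the real care is needed.
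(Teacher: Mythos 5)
The paper itself offers no proof of this theorem: it is quoted from Klav\v{z}ar, Rall and Yero \cite{MR4341189} in the open problems section purely as motivation for a question, so there is no in-paper argument to compare yours against, and I can only judge the proposal on its own terms. On those terms it is correct. Your ``single engine'' --- if $d_G(Q_i,Q_j)<d$ then $Q_i\parallel Q_j$ --- is exactly the contrapositive of condition (2), which is what makes it legitimately available in both directions: in the forward direction you prove it from the general $d$-position hypothesis (the prepend-an-edge argument is sound; note $a'$ cannot lie on the chosen $a$--$b$ geodesic since $d_G(a',b)\geq\delta$, so the concatenation really is a geodesic of length $\delta+1\leq d$ through three vertices of $S$), while in the converse direction it is just condition (2) restated, so no circularity occurs. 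The remaining delicate points all check out: the induced-$P_3$ argument for (1) uses $d\geq 2$; your case analysis forcing $p,q,r$ into three distinct cliques is exhaustive (endpoints sharing a clique is killed by $1=d_G(p,r)\geq d_G(p,q)+d_G(q,r)\geq 2$, an adjacent pair sharing a clique by the clash between the off-by-one identity and parallelism); and the final squeeze is right: since $d_G(Q_i,Q_k)\leq d_G(p,r)\leq d$, either $d_G(Q_i,Q_k)<d$, whence $Q_i\parallel Q_k$ by (2) and $d_G(p,r)=d_G(Q_i,Q_k)$, or $d_G(Q_i,Q_k)=d$ forces $d_G(p,r)=d$ directly; in both cases the triangle equality among the cliques holds, so (3) applies and contradicts $d_G(p,r)\leq d$. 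One cosmetic caution: in the converse you should say explicitly that ``the lemma'' there means condition (2), since the lemma as you proved it assumed the general $d$-position property you are trying to establish.
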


\begin{question}
Can one characterize $k$-general $d$-position subsets of connected graphs in a way which is similar to the characterization of general $d$-position sets given in Theorem \ref{theorem_characterization}?
\end{question}

Klav\v{z}ar, Rall and Yero \cite[Section 2]{MR4341189} showed that one can construct graphs with various configurations of strict and non-strict inequalities in the chain $\gp_2(G)\geq \gp_3(G)\geq\cdots\geq_{\diam(G)}(G)$. For example, if $T_n$ is the tree obtained from a path $P_{n+1}$ on $n+1$ vertices by attaching two leaves to each of its internal vertices then
\[\gp_2(T_n)=\gp_3(T_n)=\cdots=\gp_n(T_n).\]
Furthermore, they construct a graph $H_t$ with diameter $t$ such that
\[\gp_2(H_t)>\gp_3(H_t)>\cdots>\gp_t(H_t).\]

\begin{question}
To what extent can one construct graphs for which various configurations of strict and non-strict inequalities among the values of $\gp^k_d(G)$ are realized? Recall that the lattice of inequalities in Figure \ref{figure_lattice} holds in general. For example, is there a graph for which all the inequalities in Figure \ref{figure_lattice} are strict?
\end{question}

Klav\v{z}ar, Rall and Yero showed that the general $d$-position problem is NP-complete \cite[Theorem 2.2]{MR4341189}. We end with a conjecture regarding the computational complexity of the following decision problem related to finding the $k$-general $d$-position number of graphs.

\begin{quote}
\textsc{$k$-general $d$-position problem}
\begin{align*}
\text{\emph{Input}: \ \ } & \text{A graph $G$, an integer $k\geq 2$, an integer $d\geq 1$}\\
    & \text{and a positive integer $r$.}\\
\text{\emph{Question}: \ \ } & \text{Is $\gp^k_d(G)$ greater than $r$?}
\end{align*}
\end{quote}

\begin{conjecture}
The \textsc{$k$-general $d$-position problem} is \textrm{NP}-complete.
\end{conjecture}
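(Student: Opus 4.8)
The plan is to prove both directions required for NP-completeness: that the \textsc{$k$-general $d$-position problem} lies in NP, and that it is NP-hard. Recall that the decision problem takes as input a graph $G$, integers $k\geq 2$ and $d\geq 1$, and a positive integer $r$, and asks whether $\gp^k_d(G)>r$.

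For membership in NP, the natural certificate for a ``yes'' instance is a set $S\subseteq V(G)$ with $|S|>r$ that is in $k$-general $d$-position; checking $|S|>r$ is immediate, so the only substantive point is to verify in polynomial time that a given $S$ really is a $k$-general $d$-position set. First I would observe that $S$ fails to be $k$-general $d$-position if and only if there is a geodesic $g$ with $\lambda(g)\leq d$ and $|S\cap V(g)|\geq k$ \emph{whose endpoints both lie in $S$}: given any violating geodesic, the subpath between its first and last $S$-vertex is again a geodesic, has length at most $d$, and retains all the relevant $S$-vertices. Thus it suffices to decide, for each ordered pair $(a,b)\in S\times S$ with $d_G(a,b)\leq d$, whether some $a$--$b$ geodesic passes through at least $k$ vertices of $S$. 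I would do this using the interval $I(a,b)=\{w\st d_G(a,w)+d_G(w,b)=d_G(a,b)\}$: ordering the vertices of $S\cap I(a,b)$ by their distance from $a$ and placing a directed edge from $u$ to $v$ whenever $d_G(a,u)<d_G(a,v)$ and $d_G(u,v)=d_G(a,v)-d_G(a,u)$ yields a directed acyclic graph in which a directed path from $a$ to $b$ corresponds exactly to a set of $S$-vertices lying on a common $a$--$b$ geodesic (concatenating the intermediate shortest paths telescopes to an $a$--$b$ shortest path through all of them). The maximum number of $S$-vertices on such a geodesic is then the number of vertices on a longest directed path from $a$ to $b$, computable in polynomial time since the graph is acyclic. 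Computing all distances by BFS from each vertex and repeating over the $O(|S|^2)$ pairs gives a polynomial-time verifier, so the problem is in NP.

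For NP-hardness, I would reduce from the general $d$-position problem, which is NP-complete by \cite[Theorem 2.2]{MR4341189}. The reduction is essentially a restriction: since $\gp^3_d(G)=\gp_d(G)$, an instance $(G,d,r)$ of the general $d$-position problem maps to the instance $(G,3,d,r)$ of the $k$-general $d$-position problem with the same answer. (If one prefers to begin from the general position problem, one maps $(G,r)$ to $(G,3,\diam(G),r)$, using $\gp^3_{\diam(G)}(G)=\gp(G)$ together with the fact that $\diam(G)$ is computable in polynomial time.) Hence the $k$-general $d$-position problem is NP-hard, and combined with membership in NP it is NP-complete.

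The main obstacle is the membership-in-NP step, specifically the claim that validity of a candidate set $S$ can be checked in polynomial time: a graph may have exponentially many geodesics, so one cannot enumerate them directly, and the reduction of the verification to a longest-path computation in the distance-layered DAG on each interval is what makes the check efficient. By contrast, the hardness direction requires no new gadget or construction, since the general $d$-position problem is literally the $k=3$ slice of the problem at hand.
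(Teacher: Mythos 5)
There is nothing in the paper to compare your argument against: this statement is the paper's closing \emph{conjecture}, posed explicitly as an open problem, and the authors give no proof of it. Your proposal therefore has to stand on its own, and having checked it, I believe it does stand; it would settle the conjecture affirmatively. The NP-hardness half is immediate, exactly as you say: since $\gp^3_d(G)=\gp_d(G)$, the $k=3$ instances of the \textsc{$k$-general $d$-position problem} are precisely instances of the general $d$-position problem, which is NP-complete by \cite[Theorem 2.2]{MR4341189} (a result the paper itself invokes), so hardness follows by restriction; the only detail worth pinning down is the harmless off-by-one translation in $r$ if the source problem is phrased with ``$\geq r$'' rather than ``$>r$''. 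The substantive contribution is the polynomial-time verifier, and both of its steps are sound. First, a set $S$ fails to be $k$-general $d$-position if and only if some geodesic of length at most $d$ with \emph{both endpoints in $S$} contains at least $k$ vertices of $S$, since truncating a violating geodesic to the subpath between its first and last $S$-vertices preserves being a geodesic, preserves the length bound, and loses no $S$-vertices. Second, for a fixed pair $a,b\in S$ with $d_G(a,b)\leq d$, your layered digraph on $S\cap I(a,b)$ is acyclic because every arc strictly increases distance from $a$; a directed path $a=u_0\to u_1\to\cdots\to u_t=b$ telescopes, upon concatenating shortest $u_iu_{i+1}$-paths, to a walk of total length $d_G(a,u_t)+d_G(u_t,b)=d_G(a,b)$, which forces it to be a geodesic through all the $u_i$; and conversely the $S$-vertices of any $a$--$b$ geodesic, ordered along it, form exactly such a directed path. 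Hence the maximum number of $S$-vertices on a common $a$--$b$ geodesic is a longest-path computation in a DAG, which is polynomial, and $S$ is $k$-general $d$-position if and only if this maximum is below $k$ for every admissible pair. Two points you should make explicit in a write-up: a ``yes'' instance always admits a certificate of size at most $\min(r+1,|V(G)|)$ because every subset of a $k$-general $d$-position set is again one (so the certificate is polynomially bounded even before one argues $r<|V(G)|$ without loss of generality); and a walk from $a$ to $b$ whose length equals $d_G(a,b)$ is necessarily a shortest path, which is the fact that closes the telescoping argument. I find no gap: this is a correct proof of a statement the paper leaves open, with the verifier supplying the genuinely new content and the hardness inherited from the $k=3$ slice.
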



\end{document}